\newtheorem{theorem}{Theorem}[section]
\newtheorem{lemma}{Lemma}[section]
\newtheorem{definition}{Definition}[section]
\newtheorem{corollary}{Corollary}[section]
\newtheorem{remark}{Remark}[section]
\newtheorem{example}{Example}[section]
\numberwithin{equation}{section}
\title[Unique continuation ]{The  unique continuation property for second order evolution PDE{\tiny s}}
\author[Mourad Choulli]{Mourad Choulli}
\address{Universit\'e de Lorraine}
\email{mourad.choulli@univ-lorraine.fr}
\thanks{The author is supported by the grant ANR-17-CE40-0029 of the French National Research Agency ANR (project MultiOnde). }
\date{}
\begin{document}

\frenchspacing

\begin{abstract}
We present a simple and self-contained approach to establish the  unique continuation property for some classical evolution equations of second order in a cylindrical domain. We namely  discuss this property for wave, parabolic and Sch\"odinger operators with time-independent principal part. Our method is builds  on two-parameter Carleman inequalities combined with unique continuation across a pseudo-convex hypersurface with respect to the space variable. The most results we demonstrate in this work are more or less classical. Some of them are not stated exactly as in their original form.

\vskip .8cm
\end{abstract}

\subjclass[2010]{35A23, 35J15, 35K10, 35L10}

\keywords{Wave equation, parabolic equation, Schr\"odinger equation, elliptic equation, Carleman inequality, pseudo-convexity condition, non characteristic hypersurface, property of unique continuation, observability inequality.}

\maketitle

\tableofcontents

\section{Introduction}\label{introduction}

Let $D$ be a domain of $\mathbb{R}^d$, $d\ge 1$. We recall that $f\in C^\infty (D)$ is said real-analytic if its Taylor series around any arbitrary point of $D$ converges in a ball centered at this point. It is known that a real analytic function possesses the  unique continuation property which means that if $f$ vanishes in a nonempty open subset $D_0$ of $D$ then $f$ must vanishes identically (see for instance \cite[Theorem, page 65]{Jo1986Springer}). Functions satisfying this  unique continuation  property are also called quasi-analytic.

A classical result shows that a strong solution of an elliptic operator with smooth principal coefficients is quasi-analytic. In the present work we consider the analogue of this property for second order evolution PDEs. The right property in this context should be  the following: if a solution of an evolution equation of second order in the cylindrical domain $D\times (t_1,t_2)$ vanishes in $D_0\times (t_1,t_2)$, for some nonempty open subset $D_0$ of $D$, then this solution must vanishes identically. Unfortunately this property holds only in the parabolic case. Less optimal result still holds for the Sch\"odinger case. The worst case is that for wave equations for which we have only a weak version of unique continuation property, even for a large time interval.

We provide a simple and self contained approach to show the unique continuation property for wave, parabolic and Schr\"odinger equations. The approach we carry out is quite classical and it is based on two-parameter Carleman inequalities. The  unique continuation property in each case  is obtained as a consequence of the property of unique continuation across a non characteristic hypersurface satisfying in addition a pseudo-convexity condition in the case of wave and Schr\"odinger equations.

The core of our analysis consists in establishing two-parameter Carleman inequalities. We follow a classical scheme for obtaining these $L^2$-weighted energy estimates, essentially based on conjugating the original operator with a well chosen exponential function, splitting the resulting operator into its self-adjoint part and skew-adjoint part and finally making integrations by parts. The main assumption on the weight function is a pseudo-convexity condition with respect to the operator under consideration.  A systematic approach was considered by H\"ormander \cite[Section 28.2, page 234]{Ho2009Springer} for a general operator $P$ of an arbitrary order $m$ where the pseudo-convexity condition is expressed in term of the principal symbol of $P$. The method we develop in this work is more simple and does not appeal to fine analysis of PDEs and  our results have no pretension nor for generality neither for optimality.

It is worth remarking that splitting the conjugated operator into self-adjoint  and skew-adjoint parts is not the best possible way to get two-parameter Carleman inequalities for elliptic and parabolic operators. There is a particular way to split the conjugated operator into two parts. However this particular decomposition is not applicable for wave and Schr\"odinger equations. But this is not really surprising since solutions of wave and Schr\"odinger equations do not enjoy the same regularity properties of solutions of elliptic and parabolic equations.

We choose to start with the more subtle case corresponding to the wave equation. Since most calculations to get Carleman inequalities are common for different type of equations, Carleman inequalities for parabolic and Schr\"odinger equations are obtained by making some modifications in the proof of the Carleman inequality for the wave equation. We also added a short section for the elliptic case whose analysis is almost similar to that of the parabolic case.

One can find in the literature two-parameter Carleman inequalities with degenerate weight function. We refer for instance to \cite{Bo2017MCRF, Ch2009Springer, CY2017Arxiv, FG2006Sicon, FI1996Seoul,LL2012COCV} for parabolic operators and \cite{BP2002CRAS,BP2002IP,BP2007IP, MOR2008CRAS,MOR2008IP} for Schr\"odinger operators. 

A Carleman inequality for wave equations on compact Riemannian manifold can be found in \cite{BY2017Springer} and quite recently Huang \cite{Hu2020Springer} proved a Carleman inequality for a general wave operators with time-dependent principal part. The interested reader is referred to \cite{FLZ2019Springer} for a unified approach  establishing Carleman inequalities for second order PDEs and their applications to control theory and inverse problems.

We point out that it is possible to derive Carleman inequalities with single parameter in the weight function. We refer for instance to \cite{Ho1976Springer,Ho2009Springer,LTZ2004,Zh2000,Zh2001} and references therein for more details. Introducing a second parameter in the Carleman weight is a simple way to guarantee pseudo-convexity condition as it is remarked in \cite{Ho1976Springer}. It certainly contributed to the development of Carleman inequalities with two-parameter. Two-parameter Carleman inequalities appear to be more flexible then one-parameter Carleman inequalities since we have to our disposal two parameters. Of course, one-parameter Carleman inequalities can be deduced from two-parameter Carleman inequalities (see for instance Theorem  \ref{CarlemanTheoremWaveLocal} for the case of the wave equation).

The unique continuation property  for elliptic and parabolic operators with unbounded lower order coefficients was obtained in \cite{SS1980CRAS,SS1982JDE,SS1983CRAS,SS1987JDE}. The analysis in these references combine both classical tools used for establishing the property of unique continuation together with interpolation inequalities. Uniqueness and non-uniqueness for general operators were discussed in \cite{Al1983AM,AB1995MathZ,Zu1983Birkhauser} (see also the references therein). We also mention \cite{Is2017Springer,Ni1957CPAM} as additional references on uniqueness of Cauchy problems.

We also discuss briefly observability inequalities which can be seen as the quantification of unique continuation for the Cauchy problem associated to IBVPs. We refer to \cite{LL2019JEMS} for general observability inequalities for wave and Schr\"odinger equations with arbitrary interior or boundary observation region. The reader can find in this work a detailed introduction to explain the main steps to get the property of unique continuation for an intermediate case between Holmgren's analytic case and H\"ormander's general case for operators with partially analytic coefficients.

A more difficult problem consists in quantifying the property of unique continuation from an interior subdomain or the Cauchy data on a sub-boundary. The elliptic case is now almost completely solved with optimal results for $C^{1,\alpha}$-solutions and $C^{0,1}$-domains \cite{Ch2016Springer} or $H^2$-solutions and $C^{1,1}$-domains \cite{Bo2010M2AN}. A non optimal result for $H^2$-solutions and $C^{0,1}$-domains was obtained in \cite{Ch2020BAMS}. These kind of results can be obtained by a method based on three-sphere inequality which is deduced itself from a Carleman inequality. The case of parabolic and wave equations is extremely more difficult than  the elliptic case. Concerning parabolic equations, a first result was obtained in \cite{Bo2017MCRF} with Cauchy data in a particular subboundary. This result is based on a global Carleman inequality. The general case was tackled in \cite{CY2017Arxiv} where a non optimal result was established using a three-cylinder inequality. A result for the wave equation was recently proved in \cite{BC2019Arxiv}. This result was obtained via Fourier-Bros-Iagolnitzer transform allowing to transfer the quantification of unique continuation of an elliptic equation to that of the wave equation.

\section{Preliminaries}

\subsection{Main notations and assumptions}

Throughout $\Omega$ is bounded Lipschitz domain of $\mathbb{R}^n$, $n\ge 2$, with boundary $\Gamma$. The unit normal exterior vector field on $\Gamma$ is denoted by $\nu$. 

We set $Q=\Omega \times (t_1,t_2)$ and $\Sigma =\Gamma\times (t_1,t_2)$, where $t_1,t_2\in \mathbb{R}$ are fixed so that $t_1<t_2$.

$A=(a_{k\ell})$ will denote a symmetric matrix with coefficients $a_{k\ell}\in C^{2,1}(\overline{\Omega})$, $1\le k,\ell\le n$, and there exist two constants $\mathfrak{m}>0$ and $\varkappa\ge 1$ so that
\[
\varkappa ^{-1}|\xi| ^2\le \sum_{k,\ell=1}^na_{k\ell}(x)\xi_\ell  \xi _k \le \varkappa |\xi|^2,\quad x\in \Omega ,\; \xi\in \mathbb{R}^n,
\]
and
\[
\|A\|_{C^{2,1}(\overline{\Omega} ;\mathbb{R}^{n\times n})}\le \mathfrak{m}.
\]
The set of such matrices will denoted in the rest of this text by $\mathscr{M}(\Omega ,\varkappa ,\mathfrak{m})$.

It is worth mentioning that, according to Rademacher's theorem,  $C^{2,1}(\overline{\Omega})$ is continuously embedded in $W^{3,\infty}(\Omega )$.

For $\xi,\eta \in \mathbb{C}^n$, $(\xi|\eta)$ and $\xi\otimes \eta$ are defined as usual respectively by
\[
(\xi|\eta)=\sum_{k=1}^n \xi_k\overline{\eta_k} \quad \mbox{and}\quad \xi\otimes \eta =(\xi_k\eta_\ell)_{1\le k,\ell\le n}.
\]

The Jacobian matrix and the Hessian matrix are denoted respectively by
\begin{align*}
&U'=(\partial_\ell U_k),\quad U\in H^1(\Omega ;\mathbb{C}^n),
\\
&\nabla^2 u=(\partial_{k\ell}^2u),\quad u\in H^2(\Omega ;\mathbb{C}).
\end{align*}

The norm (resp. the scalar product) of a Banach (resp. a Hilbert) space $E$  is always denoted by $\|\cdot\|_E$ (resp. $\langle\cdot|\cdot\rangle_E$).

We recall that the anisotropic Sobolev space $H^{2,1}(Q)$ is defined as follows
\[
H^{2,1}(Q)=L^2((t_1,t_2);H^2(\Omega))\cap H^1((t_1,t_2);L^2(\Omega)).
\]

The following notations will be useful  in the sequel, where $S=\Omega$ or $S=\Gamma$,
\begin{align*}
& \nabla_Au(x)=A(x)^{1/2}\nabla u(x), \quad u\in H^1(\Omega;\mathbb{C}),
\\
&\mbox{div}_AU(x)=\mbox{div}(A(x)^{1/2}U(x)),\quad U \in H^1(S ;\mathbb{C}^n),
\\
&\Delta_Au(x)=\mbox{div}_A\nabla_Au(x)=\mbox{div}(A(x)\nabla u(x)),\quad u\in H^2(\Omega;\mathbb{C}),
\\
&(U|V)_A(x)=(A(x)U(x)|V(x))=(U(x)|A(x)V(x)),\quad U,V \in L^2(S ;\mathbb{C}^n),
\\
&|U|_A(x)=[(U|U)_A(x)]^{1/2},\quad U\in L^2(S ;\mathbb{C}^n).
\end{align*}
We readily obtain from the above definitions the following identity
\[
(\nabla_Au|\nabla_Av) =(\nabla u|\nabla v)_A,\quad u,v\in H^1(\Omega;\mathbb{C}).
\]
Furthermore, the Green type formula 
\begin{align}
\int_\Omega \Delta_Au \overline{v}dx&=-\int_\Omega (\nabla_Au|\nabla_Av)dx+\int_\Gamma (\nabla u|\nu)_Avd\sigma \label{Gf}
\\
&=-\int_\Omega (\nabla u|\nabla v)_Adx+\int_\Gamma (\nabla u|\nu)_Avd\sigma, \nonumber 
\end{align}
holds for any $u\in H^2(\Omega;\mathbb{C} )$ and $v\in H^1(\Omega ;\mathbb{C})$.

We need also to introduce the following notations 
\begin{align*}
&\mathcal{L}_{A,0}^e=\Delta _A\hskip 1.1cm \mbox{(elliptic operator)},
\\
&\mathcal{L}_{A,0}^p=\Delta _A-\partial_t\quad \; \mbox{(parabolic operator)},
\\
&\mathcal{L}_{A,0}^w=\Delta _A-\partial_t^2\quad \mbox{(wave operator)},
\\
&\mathcal{L}_{A,0}^s=\Delta _A+i\partial_t\quad \mbox{(Schr\"odinger operator)}.
\end{align*}

The  $n\times n$ identity matrix will denoted by $\mathbf{I}$.

We shall use for notational convenience the following notation
\[
[h]_{t=t_1}^{t_2}=h(\cdot ,t_2)-h(\cdot ,t_1),\quad h\in H^1((t_1,t_2);L^2(\Omega )).
\]

Finally, we equip $\partial Q$ with following measure
\[
d\mu(x,t) =\mathds{1}_{\Gamma \times (t_1,t_2)}(x,t)d\sigma (x)dt +\mathds{1}_{\Omega \times \{t_1,t_2\}}(x,t)dx\delta_t,
\]
where $d\sigma (x)$ is the Lebesgue measure on $\Gamma$ and $\delta_t$ is the Dirac measure at $t$.

We used above $\mathds{1}_X$ to denote the characteristic function of the measurable set $X$:
\[
\mathds{1}_X(x)=\left\{ \begin{array}{ll} 1\quad &\mbox{if}\; x\in X,\\ 0\quad &\mbox{if}\; x\not\in X.\end{array} \right.
\]

\subsection{Pseudo-convexity condition}

Define, for $A\in \mathscr{M}(\Omega ,\varkappa ,\mathfrak{m})$, 
\[
\Lambda_{k\ell}^m(A)(x)=-\sum_{p=1}^n\partial_pa_{k\ell}(x)a_{pm}(x)+2\sum_{p=1}^na_{kp}(x)\partial_pa_{\ell m}(x),
\]
where $x\in \overline{\Omega}$ and $1\le k,\ell,m\le n$.

We associate to $h\in C^1(\overline{\Omega})$ the matrix $\Upsilon_A (h)$ given  by
\[
(\Upsilon_A (h))_{k\ell}(x)=\sum_{m=1}^n\Lambda_{k\ell}^m(A)(x)\partial_m h (x),\quad x\in \overline{\Omega} .
\]

Note that $\Upsilon_A (h)$ is not necessarily symmetric. 

Inspired by the definition introduced in \cite[Section 28.2, page 234]{Ho2009Springer} we consider the following one:

\begin{definition}\label{definitionPC}
We say that $h\in C^2(\overline{\Omega})$ is $A$-pseudo-convex with constant $\kappa >0$ in $\Omega$  if $\nabla h(x)\ne 0$ for any $x\in \overline{\Omega}$ and if 
\[
(\Theta_A(h)(x)\xi|\xi)\ge \kappa |\xi|^2,\; x\in \overline{\Omega},\; \xi \in \mathbb{R}^n,
\]
where
\[
\Theta_A(h)=2A\nabla^2hA+\Upsilon_A(h).
\]
\end{definition}

It worth noticing that $A\rightarrow \Theta_A$ is positively homogenous of degree two: 
\[
\Theta_{\lambda A}=\lambda^2\Theta_A,\quad  \lambda>0.
\]

Since $\Theta_{\mathbf{I}}(h)=2\nabla^2h$, $h$ is $\mathbf{I}$-pseudo-convex in $\Omega$  if $\nabla h(x)\ne 0$ and $\nabla^2h(x)$ is positive definite for any $x\in \overline{\Omega}$. In other words, when $A=\mathbf{I}$ pseudo-convexity is reduced to local strict convexity.

\subsection{Carleman weights} 

It will be convenient to define the notion of Carleman weight for different kind of operators we are interested in. In the rest of this paper $\psi=\psi (x,t)$ is a function of the form
\[
\psi(x,t)=\psi_0(x)+\psi_1(t),\quad (x,t)\in Q,
\]
and $\phi =e^{\lambda \psi}$, $\lambda >0$.

\begin{definition}\label{definitionCW}
(a) Let $0\le \psi_0 \in C^4(\overline{\Omega})$. We say that $\phi_0=e^{\lambda \psi_0}$, $\lambda >0$, is a weight function for the elliptic operator $\mathcal{L}_{A,0}^e$ if $\nabla \psi_0(x)\ne 0$ for any $x\in \overline{\Omega}$.
\\
(b) If $0\le \psi \in C^4(\overline{Q})$ and $\nabla \psi_0(x)\ne 0$, for any $x\in \overline{\Omega}$, we say that $\phi$ is a weight function for the parabolic operator $\mathcal{L}_{A,0}^p$.
\\
(c) Assume that $0\le \psi \in C^4(\overline{Q})$. Then $\phi$ is said a weight function for the Schr\"odinger operator $\mathcal{L}_{A,0}^s$ if $\psi_0$ is A-pseudo-convex in $\Omega$.
\\
(d) We say that $\phi$, with $0\le \psi \in C^4(\overline{Q})$, is a weight function for the wave operator $\mathcal{L}_{A,0}^w$ if $\psi_0$ is A-pseudo-convex with constant $\kappa >0$ in $\Omega$ and if, in addition, the following two conditions hold:
\begin{align}
&\min_{\overline{Q}}\left[|\nabla \psi_0|_A^2-(\partial_t\psi_1 )^2\right]^2>0,\label{int1}
\\
&|\partial_t^2\psi_1|\le \varkappa^{-1}\kappa/4\label{int2}.
\end{align}
\end{definition}

\begin{example}\label{example1}
{\rm
Fix $t_0\in \mathbb{R}$, $x_0\in \mathbb{R}^n\setminus \overline{\Omega}$ and set, for $\gamma \in \mathbb{R}$,
\[
\psi (x,t)=\left[|x-x_0|^2+\gamma (t+t_0)^2\right]/2+C, \quad (x,t)\in \overline{Q},
\]
where the constant $C$ is chosen sufficiently large in order to guarantee that $\psi \ge 0$. In that case
\[
(\Upsilon_A(\psi_0))_{\alpha \beta}=-\sum_{k,\ell=1}^n\partial_ka_{\alpha\beta}a_{k\ell}(x_\ell -x_{0,\ell})+2\sum_{k,\ell=1}^na_{\alpha k}\partial_ka_{\beta\ell}(x_\ell -x_{0,\ell}) .
\]

Let us first discuss A-pseudo-convexity condition of $\psi_0$ in different cases.

(i) Assume that $\Omega =B(0,r)$ and $x_0\in B(0,2r)\setminus \overline{B}(0,r)$. We can then choose $r$ sufficiently small in such a way that
\[
(\Upsilon_A(\psi_0)\xi |\xi)\ge -\varkappa^2 |\xi|^2,
\]
from which we deduce that
\[
(\Theta_A(\psi_0)\xi |\xi)\ge \varkappa ^2|\xi|^2.
\]

(ii) As the mapping
\[
A \in C^{2,1}(\overline{\Omega} ,\mathbb{R}^{n\times n})\mapsto \Upsilon_A(\psi_0)\in C^{1,1}(\overline{\Omega} ,\mathbb{R}^{n\times n})
\]
is continuous in a neighborhood of $\mathbf{I}$ and $\Upsilon_{\mathbf{I}}(\psi_0)=0$, we conclude that there exists $\mathcal{N}$, a neighborhood of $\mathbf{I}$ in $C^{2,1}(\overline{\Omega} ,\mathbb{R}^{n\times n})$,  so that, for any $A\in \mathcal{N}$, we have
\begin{align*}
& (A(x)\xi |\xi )\ge |\xi|^2/2,\quad x\in \overline{\Omega} ,\; \xi \in \mathbb{R}^n,
\\
&(\Upsilon_A(\psi_0)\xi |\xi)\ge -|\xi|^2/4,\quad x\in \overline{\Omega} ,\; \xi \in \mathbb{R}^n.
\end{align*}
Whence
\[
(\Theta_A(\psi_0)\xi |\xi)\ge |\xi|^2/4,\quad x\in \overline{\Omega} ,\; \xi \in \mathbb{R}^n,
\]
provided that $A\in \mathcal{N}$.

(iii) Consider the particular case in which $A=a\mathbf{I}$ with $a\in C^{2,1}(\overline{\Omega})$ satisfying $a\ge \varkappa$. Simple computations then yield
\[
\Upsilon_A(\psi_0)= -a\left(\nabla a |x-x_0\right)\mathbf{I}+2a\nabla  a\otimes (x-x_0).
\]
In consequence
\[
(\Theta_A(\psi_0)\xi |\xi)\ge \varkappa \left(2\varkappa-3\left| \nabla  a\right||x-x_0|\right) |\xi|^2.
\]
Hence a condition guaranteeing that $\psi_0$ is $a\mathbf{I}$-pseudo-convex is
\[
\left| \nabla a\right||x-x_0|<2\varkappa/3.
\]
This condition is achieved for instance if $\Omega$ has sufficiently small diameter and $x_0$ is close to $\Omega$ or else $|\nabla a|$ is small enough.

Next, we discuss a bound on $\gamma$ for which \eqref{int1} and \eqref{int2} hold simultaneously. If $d=\mbox{dist}(x_0,\overline{\Omega})$ $(>0)$ and $\sigma =\|t+t_0\|_{L^\infty ((t_1,t_2))}$, then \eqref{int1} is satisfied whenever  
\[
|\nabla \psi|^2-(\partial_t \psi )^2\ge d^2- \sigma^2 \gamma^2  >0.
\]
 As $\partial_t^2\psi =\gamma$, we see that both \eqref{int1} and \eqref{int2} are satisfied when
\[
0<|\gamma| <\min \left[d/\sigma ,\kappa/(4\varkappa)\right].
\]
}
\end{example}

\subsection{Pseudo-convex hypersurface}\label{subsectionPCH}

We begin by a lemma concerning the action of an orthogonal transformation on $A\in \mathscr{M}(\Omega ,\varkappa,\mathfrak{m})$. If $\mathcal{O}$ is an orthogonal transformation and $A\in \mathscr{M}(\Omega ,\varkappa,\mathfrak{m})$, we set $A_{\mathcal{O}}(y)=\mathcal{O}A(\mathcal{O}^ty)\mathcal{O}^t$. Here $\mathcal{O}^t$ denotes the transposed matrix of $\mathcal{O}$.

The proof of the following lemma is straightforward.

\begin{lemma}\label{lemmaPCH0}
Let $A\in \mathscr{M}(\Omega ,\varkappa,\mathfrak{m})$ and $\mathcal{O}$ an orthogonal transformation. Then $A_{\mathcal{O}}\in \mathscr{M}(\mathcal{O}\Omega,\varkappa ,\mathfrak{m}')$, where $\mathfrak{m}'=\mathfrak{m}'(n,\mathfrak{m})>0$ is a constant.
\end{lemma}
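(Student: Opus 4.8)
The plan is to check directly, one after another, the three conditions defining membership in $\mathscr{M}(\mathcal{O}\Omega,\varkappa,\mathfrak{m}')$. Throughout, write $B=A_{\mathcal{O}}$, i.e. $B(y)=\mathcal{O}A(\mathcal{O}^ty)\mathcal{O}^t$ for $y\in\mathcal{O}\Omega$; one first observes that $\mathcal{O}\Omega$ is again a bounded Lipschitz domain with $\overline{\mathcal{O}\Omega}=\mathcal{O}\overline{\Omega}$, so the statement is meaningful.

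\textbf{Symmetry and ellipticity.} These cost nothing. From $A(x)^t=A(x)$ one gets $B(y)^t=\mathcal{O}A(\mathcal{O}^ty)^t\mathcal{O}^t=B(y)$. For $\xi\in\mathbb{R}^n$ set $\eta=\mathcal{O}^t\xi$; since $\mathcal{O}$ is orthogonal, $|\eta|=|\xi|$ and, using that $\mathcal{O}^t$ is the adjoint of $\mathcal{O}$,
\[
(B(y)\xi|\xi)=(\mathcal{O}A(\mathcal{O}^ty)\mathcal{O}^t\xi|\xi)=(A(\mathcal{O}^ty)\eta|\eta),
\]
whence $\varkappa^{-1}|\xi|^2\le(B(y)\xi|\xi)\le\varkappa|\xi|^2$ for all $y\in\mathcal{O}\Omega$, with the \emph{same} $\varkappa$.

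\textbf{The $C^{2,1}$ bound.} This is the only point carrying any content, and the argument is pure bookkeeping via the chain rule. In components $b_{k\ell}(y)=\sum_{p,q}\mathcal{O}_{kp}\mathcal{O}_{\ell q}a_{pq}(\mathcal{O}^ty)$, and since $y\mapsto\mathcal{O}^ty$ is linear with constant Jacobian entries $\mathcal{O}_{jr}$, iterating the chain rule shows that each partial derivative of $b_{k\ell}$ of order $j\le 3$ is a finite sum of terms equal to a product of $j+2$ entries of $\mathcal{O}$ times a partial derivative of order $j$ of some $a_{pq}$, evaluated at $\mathcal{O}^ty$. Every such coefficient is bounded by $1$ in absolute value, and the number of terms is bounded by a constant $c(n)$ depending only on $n$ (recall $j\le 3$). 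As $\mathcal{O}^t$ is an isometry of $\mathbb{R}^n$, precomposition with it preserves sup-norms over $\overline{\Omega}$ and Lipschitz seminorms; using moreover that $C^{2,1}(\overline{\Omega})$ is continuously embedded in $W^{3,\infty}(\Omega)$ (the remark made just after the definition of $\mathscr{M}$), one concludes
\[
\|B\|_{C^{2,1}(\overline{\mathcal{O}\Omega};\mathbb{R}^{n\times n})}\le c(n)\|A\|_{C^{2,1}(\overline{\Omega};\mathbb{R}^{n\times n})}\le c(n)\mathfrak{m},
\]
so the lemma holds with $\mathfrak{m}'=c(n)\mathfrak{m}$.

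\textbf{Main obstacle.} There is essentially none: the statement is precisely of the ``straightforward'' kind announced in the text. The only things needing a modicum of care are remembering that the $C^{2,1}$-norm involves derivatives up to third order (so the chain rule must be iterated three times, not two) and noting that the combinatorial factor produced by the index contractions with $\mathcal{O}$ depends on $n$ alone, which is exactly why $\mathfrak{m}'$ can be taken of the form $c(n)\mathfrak{m}$.
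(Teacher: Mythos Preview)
Your proposal is correct and is precisely the kind of direct verification the paper has in mind: the paper does not actually prove this lemma but only remarks that its proof is straightforward. Your checks of symmetry, of the ellipticity bounds via $\eta=\mathcal{O}^t\xi$, and of the $C^{2,1}$ estimate by iterating the chain rule (with the combinatorial factor depending only on $n$) are exactly the details one would fill in.
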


The gradient with respect to the variable $x'\in \mathbb{R}^{n-1}$ or $y'\in \mathbb{R}^{n-1}$ is denoted hereafter by $\nabla '$.

Let $\theta$ be a $C^{3,1}$-function defined in a neighborhood $\mathcal{U}$ of $\tilde{x}$ in  $\Omega$ with $\nabla \theta (\tilde{x})\ne 0$. Consider then the hypersurface 
\[
H=\{x\in \mathcal{U};\; \theta(x)=\theta(\tilde{x})\}.
\]
Making a translation and change of coordinates we may assume that $\tilde{x}=0$, $\theta(\tilde{x})=0$, $\nabla '\theta (0)=0$ and $\partial_n\theta (0)\ne 0$. With the help of the implicit function theorem $\theta (x)=0$ near $0$ may rewritten as $x_n=\vartheta (x')$ with $\vartheta (0)=0$ and $\nabla '\vartheta (0) =0$. 

Let $\hat{A}$ be the matrix obtained after this transformations. According to Lemma \ref{lemmaPCH0}, $\hat{A}\in \mathscr{M}(\mathcal{O}(\mathcal{U}+\tilde{x}),\varkappa ,\mathfrak{m}')$, $\mathfrak{m}'=\mathfrak{m}'(n,\mathfrak{m})>0$ is a constant, where $\mathcal{O}$ is the orthogonal transformation corresponding to the above change of coordinates. Also, note that $\hat{\mathbf{I}}=\mathbf{I}$.

Consider, in some neighborhood  of $0$, the mapping
\begin{equation}\label{PCH0}
\varphi =\varphi_H :(x',x_n)\in \omega \mapsto (y',y_n)=(x',x_n-\vartheta (x')+|x'|^2).
\end{equation}
Elementary calculations yield
\[
\varphi'(x',x_n)=\left(
\begin{array} {cccc}
1 &\ldots &0 &0
\\
\vdots &\ddots &0 &\vdots
\\
0 &\ldots &1 &0
\\
g_1(x') &\ldots &g_{n-1}(x') &1
\end{array}
\right)
\]
with $g_k(x')=-\partial_k\vartheta (x')+2x_k'$,  $0\le k\le n-1$. Whence
\[
(\varphi '(x',x_n)\xi|\xi)= |\xi'|^2+(-\nabla'\vartheta (x')+2x'|\xi ')\xi_n +\xi_n^2,\quad \xi=(\xi',\xi_n) \in \mathbb{R}^n.
\]
Since
\begin{align*}
|(-\nabla'\vartheta (x')+2x'|\xi ')\xi_n|&\le |(-\nabla'\vartheta (x')+2x'|\xi ')|^2/2+\xi_n^2/2
\\
&\le |-\nabla'\vartheta (x')+2x'|^2|\xi '|^2/2+2\xi_n^2/2
\\
&\le \left(|\nabla'\vartheta (x')|^2+4|x'|^2\right)|\xi '|^2+\xi_n^2/2,
\end{align*}
there exists a neighborhood $\omega$ of $0$, only depending of $\vartheta$, so that
\[
|(-\nabla'\vartheta (x')+2x'|\xi ')\xi_n|\le |\xi '|^2/2+\xi_n^2/2=|\xi|^2/2.
\]
In consequence
\begin{equation}\label{PCH1}
(\varphi '(x',x_n)\xi|\xi)\ge |\xi|^2/2,\quad x=(x',x_n)\in \omega ,\; \xi \in \mathbb{R}^n.
\end{equation}
Whence \eqref{PCH1} together with Cauchy-Schwarz's inequality yield
\begin{equation}\label{PCH1.1}
|(\varphi')^t(x',x_n)\xi |^2\ge |\xi|^2/2,\quad x=(x',x_n)\in \omega,\; \xi \in \mathbb{R}^n.
\end{equation}
Let $\tilde{\omega}=\varphi(\omega)$ (hence $\varphi$ is a diffeomorphism from $\omega$ onto $\tilde{\omega}$) and define
\begin{equation}\label{PCH2}
\tilde{A} (y)=\varphi'\left(\varphi ^{-1}(y)\right)\hat{A}\left(\varphi ^{-1}(y)\right)(\varphi')^t\left(\varphi ^{-1}(y)\right),\quad y\in \tilde{\omega}.
\end{equation}
In light of \eqref{PCH1.1} we obtain
\[
(\tilde{A}(y)\xi|\xi)\ge \varkappa |(\varphi')^t\left(\varphi ^{-1}(y)\right)\xi|^2\ge (\varkappa/2)|\xi|^2,\quad y\in \tilde{\omega}.
\]
Also, by straightforward computations we get
\[
\|\tilde{A}\|_{C^{2,1}(\overline{\Omega} ;\mathbb{R}^{n\times n})}\le \tilde{\mathfrak{m}},
\]
with $\tilde{\mathfrak{m}}$ only depending of $n$, $\mathfrak{m}$ and $\vartheta$.

We observe that the role of $\varphi$ is to transform the hypersurface $\{x_n=\vartheta(x')\}$ in a neighborhood of the origin into the convex hypersurface $\{y_n=|y'|^2\}$ in another neighborhood of the origin.

Define $\tilde{\psi}_0$ as follows
\[
\tilde{\psi}_0(y)=(y_n-1)^2+|y'|^2.
\]
The matrix $\tilde{A}$ appearing in \eqref{PCH2} is denoted hereafter by $A_H$. The following definition is motivated by the classical procedure used to establish the unique continuation property of an elliptic operator across the convex hypersurface $\{y_n=|y'|^2\}$.
\begin{definition}\label{definitionPCH1}
We say that the hypersurface $H$ is $A$-pseudo-convex if $\tilde{\psi}_0$ is $A_H$-pseudo-convex in $\tilde{\omega}$. 
\end{definition}

\begin{lemma}\label{lemmaPCH}
(a) There exists $\mathcal{N}$, a neighborhood of $\mathbf{I}$ in $C^{2,1}(\overline{\Omega} ;\mathbb{R}^{n\times n})$, so that, for any $A\in \mathcal{N}$, $H$ is $A$-pseudo-convex.
\\
(b) There exists $\mathcal{N}_0$, a neighborhood of $\mathbf{I}$ in $C^{2,1}(\overline{\Omega} ;\mathbb{R}^{n\times n})$, so that, for any $A\in \mathcal{N}_0$ and any orthogonal transformation $\mathcal{O}$, we have $A_\mathcal{O}\in \mathcal{N}$.
\end{lemma}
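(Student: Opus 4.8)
\emph{Overview.} The plan is to prove (a) first, then (b). For (a) I would fix the hypersurface $H$ once and for all, reduce the asserted pseudo-convexity to two properties of $\tilde\psi_0$ on $\overline{\tilde\omega}$, dispose of the easy one, settle the other first at $A=\mathbf I$ by the classical elliptic computation, and then extend to a $C^{2,1}$-neighbourhood of $\mathbf I$ by continuity. Assertion (b) is a soft $\mathcal O$-uniform estimate obtained by the chain rule.

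\emph{Proof of (a).} Fix $H$, hence the point $\tilde x$, the orthogonal transformation $\mathcal O$, the function $\vartheta$, the map $\varphi=\varphi_H$ and the neighbourhoods $\omega$ and $\tilde\omega=\varphi(\omega)$; only the matrix $A$ will vary. By Definitions \ref{definitionPCH1} and \ref{definitionPC} it suffices to produce a $C^{2,1}$-neighbourhood $\mathcal N$ of $\mathbf I$ and a constant $\kappa>0$ such that, for every $A\in\mathcal N$, one has $\nabla\tilde\psi_0\ne 0$ on $\overline{\tilde\omega}$ and $(\Theta_{A_H}(\tilde\psi_0)\xi|\xi)\ge\kappa|\xi|^2$ on $\overline{\tilde\omega}$ for all $\xi\in\mathbb R^n$. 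The first requirement does not involve $A$: $\nabla\tilde\psi_0(y)=2(y',y_n-1)$ vanishes only at $(0,\dots,0,1)$, which we remove from $\tilde\omega$ by taking $\omega$ small enough -- legitimate since $\omega$ was only required to be small depending on $\vartheta$. For the second requirement, the heart of the matter is the base case $A=\mathbf I$. Since $\mathcal O$ is orthogonal one has $\hat{\mathbf I}=\mathbf I$, so in that case $A_H=\tilde A$ reduces, via \eqref{PCH2}, to the fixed matrix field $\varphi'(\varphi^{-1}(\cdot))(\varphi')^t(\varphi^{-1}(\cdot))$, which lies in $C^{2,1}(\overline{\tilde\omega};\mathbb R^{n\times n})$ and satisfies $\tilde A(0)=\mathbf I$ because $\varphi'(0)=\mathbf I$ (recall $\nabla'\vartheta(0)=0$). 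Using $\nabla^2\tilde\psi_0\equiv 2\mathbf I$ one gets $\Theta_{\tilde A}(\tilde\psi_0)=4\tilde A^2+\Upsilon_{\tilde A}(\tilde\psi_0)$, and one checks that $\Theta_{\tilde A}(\tilde\psi_0)(0)$ is positive definite; this is exactly the classical computation, recalled just before Definition \ref{definitionPCH1}, showing that the convex hypersurface $\{y_n=|y'|^2\}$ is pseudo-convex for $\Delta$ with the spherical weight $\tilde\psi_0$, the quadratic term of $\varphi$ being precisely what gives the first-order correction $\Upsilon_{\tilde A}(\tilde\psi_0)$ the right sign. By continuity this positivity persists on a closed neighbourhood of $0$, which we take to be $\overline{\tilde\omega}$ (shrinking $\omega$ a last time if necessary), so that $(\Theta_{\tilde A}(\tilde\psi_0)\xi|\xi)\ge 2\kappa|\xi|^2$ on $\overline{\tilde\omega}$ for some $\kappa>0$.

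To pass from $A=\mathbf I$ to a neighbourhood, observe that the chain $A\mapsto\hat A\mapsto\tilde A=A_H\mapsto\Theta_{A_H}(\tilde\psi_0)$ is continuous from a $C^{2,1}$-neighbourhood of $\mathbf I$ into $C^{1,1}(\overline{\tilde\omega};\mathbb R^{n\times n})$: the first two maps are linear and bounded, since $A\mapsto\hat A$ is conjugation by the fixed $\mathcal O$ composed with a translation, and $\hat A\mapsto\tilde A$ is multiplication by the fixed $C^{2,1}$ fields $\varphi'(\varphi^{-1}(\cdot))$, $(\varphi')^t(\varphi^{-1}(\cdot))$ after composition with the fixed $C^{3,1}$ diffeomorphism $\varphi^{-1}$ (here one uses that $\theta$, hence $\vartheta$ and $\varphi$, is of class $C^{3,1}$, together with the fact that $C^{2,1}$ is stable under products and under composition with a $C^{3,1}$ diffeomorphism); and $\Upsilon_{A_H}(\tilde\psi_0)$ is a polynomial expression in $A_H$, in its first derivatives, and in the fixed first derivatives of $\tilde\psi_0$, hence lies in $C^{1,1}$ and depends continuously on $A_H\in C^{2,1}$. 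Consequently $\xi\mapsto(\Theta_{A_H}(\tilde\psi_0)(y)\xi|\xi)$ depends on $A$ continuously, uniformly in $(y,\xi)\in\overline{\tilde\omega}\times\{|\xi|=1\}$, and there is a $C^{2,1}$-neighbourhood $\mathcal N$ of $\mathbf I$ on which this form stays $\ge\kappa|\xi|^2$ on $\overline{\tilde\omega}$. Together with the non-vanishing of $\nabla\tilde\psi_0$, this says that $\tilde\psi_0$ is $A_H$-pseudo-convex in $\tilde\omega$, i.e.\ that $H$ is $A$-pseudo-convex, for every $A\in\mathcal N$.

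\emph{Proof of (b), and the main obstacle.} As $\mathcal O$ is orthogonal, $A_\mathcal O(y)-\mathbf I=\mathcal O\big(A(\mathcal O^t y)-\mathbf I\big)\mathcal O^t$, and conjugation by $\mathcal O$ preserves matrix norms, so $\|A_\mathcal O-\mathbf I\|_{C^0}\le\|A-\mathbf I\|_{C^0}$. By the chain rule, every partial derivative of order $j\le 3$ of $A_\mathcal O$ at $y$ equals $\mathcal O$ times a finite sum of terms, each one a $j$-th order partial derivative of $A$ evaluated at $\mathcal O^t y$ multiplied by a product of at most $j$ entries of $\mathcal O$ (all of modulus $\le 1$), the whole multiplied on the right by $\mathcal O^t$; the third-order difference quotients admit the same description. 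Hence $\|A_\mathcal O-\mathbf I\|_{C^{2,1}}\le C(n)\|A-\mathbf I\|_{C^{2,1}}$ with $C(n)$ depending only on $n$, in particular independent of $\mathcal O$; so, if $\mathcal N$ from (a) contains $\{\|B-\mathbf I\|_{C^{2,1}}<\varepsilon\}$, it suffices to take $\mathcal N_0=\{\|A-\mathbf I\|_{C^{2,1}}<\varepsilon/C(n)\}$. The only genuinely non-routine step in all of this is the base-case computation in (a): actually evaluating $\Theta_{\tilde A}(\tilde\psi_0)=4\tilde A^2+\Upsilon_{\tilde A}(\tilde\psi_0)$ for $\tilde A=\varphi'(\varphi')^t$ and checking its positivity on $\overline{\tilde\omega}$, which demands some care with the sign of the quadratic term of $\varphi$ and, if necessary, a further shrinking of $\tilde\omega$.
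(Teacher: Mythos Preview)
Your approach is exactly the paper's: for (a) verify positivity of $\Theta_{A_H}(\tilde\psi_0)$ at $y=0$ for $A=\mathbf I$ and then extend by continuity first in $y$, then in $A$; for (b) use that $A\mapsto A_{\mathcal O}$ is a uniformly bounded linear map on $C^{2,1}$, which is the content of Lemma~\ref{lemmaPCH0}. Your treatment of (b) and of the continuity step in (a) is correct and, if anything, more careful than the paper's.

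The one genuine gap is precisely the one you flag yourself: the base-case computation. You write ``one checks that $\Theta_{\tilde A}(\tilde\psi_0)(0)$ is positive definite; this is exactly the classical computation, recalled just before Definition~\ref{definitionPCH1}'', but nothing of the sort is recalled there --- that passage is only set-up. This computation is the entire substance of the paper's proof: it writes out $\tilde A=\mathbf I_H$ explicitly, computes $\partial_p\tilde a_{k\ell}(0)$ entrywise, and finds that $\tilde\Upsilon_{k\ell}(0)=-2\tilde\Lambda^n_{k\ell}(0)=0$, whence $\Theta_{\tilde A}(\tilde\psi_0)(0)=4\mathbf I$. Your heuristic that ``the quadratic term of $\varphi$ gives $\Upsilon_{\tilde A}(\tilde\psi_0)$ the right sign'' is not quite what happens --- in the paper's computation $\Upsilon$ vanishes at the origin, and all the positivity comes from the $4\tilde A^2$ term. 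Since you already have $\tilde A(0)=\mathbf I$ and $\Theta_{\tilde A}(\tilde\psi_0)=4\tilde A^2+\Upsilon_{\tilde A}(\tilde\psi_0)$, what remains is to actually evaluate the first derivatives $\partial_p\tilde a_{k\ell}(0)$ from the explicit formula for $\mathbf I_H$ and feed them into the definition of $\Upsilon$; this is routine but must be written out, and the paper does so.
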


\begin{proof}
(a) Let us first discuss the  case where $A=\mathbf{I}$. Note that it is not hard to check that
\[
\varphi^{-1}(y',y_n)=\left(y',y_n+\vartheta (y')-|y'|^2\right)
\]
and
\[
\mathbf{I}_H(y)=\tilde{\mathbf{I}} (y)=(\tilde{a}_{ij}(y'))=\left(
\begin{array} {cccc}
1 &\ldots &0 &\tilde{g}_1(y')
\\
\vdots &\ddots &0 &\vdots
\\
0 &\ldots &1 &\tilde{g}_{n-1}(y')
\\
\tilde{g}_1(y') &\ldots &\tilde{g}_{n-1}(y') &\tilde{g}_n(y')
\end{array}
\right),
\]
with $\tilde{g}_k(y')=\partial_k\vartheta (y')-2y_k'$,  $0\le k\le n-1$ and $\tilde{g}_n=|\nabla \vartheta(y')-2y'|^2+1$.

We have clearly $\tilde{A} (0)=\mathbf{I}$ and, for $1\le p\le n-1$,
\[
\partial_p\tilde{a}_{k\ell}(y')=
\left\{
\begin{array}{ll}
0,\quad  1\le k,\ell\le n-1,
\\
\\
\partial_{pk}^2\vartheta (y')-2\delta_{pk}, \quad 1\le k\le n-1,\; \ell=n,
\\
\\
2\sum_{\alpha=1}^{n-1}(\partial_\alpha \vartheta(y')-2y_\alpha)(\partial_{p\alpha}^2 \vartheta(y')-2\delta_{p\alpha}) ,\quad k=n,\; \ell=n.
\end{array}
\right.
\]
Therefore
\[
\partial_p\tilde{a}_{k\ell}(0)=
\left\{
\begin{array}{ll}
0, &\quad 1\le k,\ell\le n-1,
\\
\\
\partial_{pk}^2\vartheta (0)-2\delta_{pk} &1\le k\le n-1,\; \ell=n,
\\
\\
0 & k=n,\; \ell=n.
\end{array}
\right.
\]
Let $\tilde{\Lambda}_{k\ell}^m$ given by
\[
\tilde{\Lambda}_{k,\ell}^m(y)=-\sum_{p=1}^n\partial_p\tilde{a}_{k\ell}(y)\tilde{a}_{pm}(y)+2\sum_{p=1}^n\tilde{a}_{kp}(y)\partial_p\tilde{a}_{\ell m}(y)
\]
and define  $\tilde{\Upsilon}(y)=(\tilde{\Upsilon}_{k\ell}(y))$ as follows
\[
\tilde{\Upsilon}_{k\ell}(y)=\sum_{m=1}^n\tilde{\Lambda}_{k\ell}^m(y)\partial_m \tilde{\psi}_0(y) .
\]
It is then straightforward to check that
\[
\tilde{\Upsilon}_{k\ell}(0)=\sum_{m=1}^n\Lambda_{k,\ell}^m(0)\partial_m \tilde{\psi}_0(0)=-2\Lambda_{k,\ell}^n(0)=0,\quad 0\le k,\ell \le n.
\]
Since
\[
\tilde{\Theta}(y)=\Theta_{\mathbf{I}}(\tilde{\psi}_0)(y)= 2\nabla ^2\tilde{\psi}_0(y)+\tilde{\Upsilon}(y),
\]
we get
\[
\tilde{\Theta}(0)= 4\mathbf{I},
\]
and hence
\[
(\tilde{\Theta}(0)\xi|\xi)\ge 4|\xi|^2,\quad \xi \in \mathbb{R}^n.
\]
Continuity argument, first with respect to $y$ and then with respect to $A$, shows that, by reducing $\tilde{\omega}$ if necessary,
\[
(\Theta_{\tilde{A}}(\psi_0)(y)\xi|\xi)\ge 2|\xi|^2,\quad y\in \tilde{\omega},\; \xi \in \mathbb{R}^n.
\]
(b) Immediate from Lemma \ref{lemmaPCH0}.
\end{proof}

\section{The wave equation}

\subsection{Carleman inequality}

In this subsection $\psi(x,t)=\psi_0(x)+\psi_1(t)$ is a weight function for the wave operator $\mathcal{L}_{A,0}^w$ with $A$-pseudo-convexity constant $\kappa>0$. We set
\[
\delta = \min_{\overline{Q}}\left[|\nabla \psi|_A^2-(\partial_t\psi )^2\right]^2\;  (>0)
\]
and  $\phi=e^{\lambda \psi}$.

We use for notational convenience $\mathfrak{d}=(\Omega, t_1, t_2,\varkappa, \mathfrak{m}, \kappa,\delta ,\mathfrak{b})$ with $\mathfrak{b}\ge \|\psi\|_{C^4(\overline{Q})}$, and $\mathbf{D}_A=(\nabla _A\, \cdot , \partial_t\, \cdot)$.

\begin{theorem}\label{CarlemanTheorem1}
There exist three constants $\aleph=\aleph(\mathfrak{d})$, $\lambda^\ast=\lambda^\ast(\mathfrak{d})$ and $\tau^\ast =\tau^\ast(\mathfrak{d})$ so that
\begin{align}
&\aleph\int_Qe^{2\tau \phi}\left[\tau^3\lambda ^4\phi^3 u^2+\tau \lambda  \phi |\mathbf{D}_A u|^2\right]dxdt \label{C1.0}
\\
&\hskip1cm \le \int_Qe^{2\tau \phi}\left(\mathcal{L}_{A,0}^wu\right)^2dxdt +\int_{\partial Q} e^{2\tau \phi}\left[\tau^3\lambda ^3\phi^3u^2+\tau \lambda \phi |\mathbf{D}_A u|^2\right]d\mu ,\nonumber
\end{align}
for any $\lambda \ge \lambda^\ast$, $\tau \ge \tau^\ast$ and $u\in H^2(Q,\mathbb{R})$.
\end{theorem}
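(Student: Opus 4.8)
The plan is to follow the classical conjugation-and-splitting scheme for Carleman estimates, now carried out with the two parameters $\tau$ (the large weight parameter) and $\lambda$ (the parameter inside $\phi=e^{\lambda\psi}$). First I would set $v=e^{\tau\phi}u$, so that $u\in H^2(Q,\mathbb{R})$ corresponds to $v$ in the same class, and compute the conjugated operator $P_{\tau}v=e^{\tau\phi}\mathcal{L}_{A,0}^w(e^{-\tau\phi}v)$. Expanding the derivatives produces a second-order operator whose lower-order terms carry explicit powers of $\tau$, $\lambda$ and $\phi$; using $\psi=\psi_0(x)+\psi_1(t)$ keeps the $x$- and $t$-derivatives of $\phi$ separated, which matters for exploiting the two conditions \eqref{int1}--\eqref{int2}. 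I would then split $P_\tau=P_\tau^++P_\tau^-$ into its formally self-adjoint part $P_\tau^+$ and skew-adjoint part $P_\tau^-$ (with respect to the $L^2(Q)$ inner product), so that
\[
\int_Q (P_\tau v)^2\,dxdt=\int_Q (P_\tau^+v)^2\,dxdt+\int_Q (P_\tau^-v)^2\,dxdt+2\int_Q P_\tau^+v\,P_\tau^-v\,dxdt,
\]
and the heart of the matter is the cross term $2\int_Q P_\tau^+v\,P_\tau^-v\,dxdt$, which after integration by parts yields a volume integral plus boundary contributions on $\partial Q$ carrying the measure $d\mu$.

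Next I would organize the cross term. Integration by parts moves derivatives around and generates, as the dominant volume contributions, quadratic forms in $(\nabla_A v,\partial_t v)$ and in $v$ whose coefficients are governed respectively by the matrix $\Theta_A(\psi_0)$ acting on the spatial gradient, by $\partial_t^2\psi_1$ acting on the time derivative, and by powers of $\lambda$ coming from differentiating $\phi=e^{\lambda\psi}$ (each spatial derivative of $\phi$ brings a factor $\lambda\phi\nabla\psi$). The $A$-pseudo-convexity of $\psi_0$ with constant $\kappa$ gives the lower bound $(\Theta_A(\psi_0)\xi|\xi)\ge\kappa|\xi|^2$ which, together with the smallness \eqref{int2} of $|\partial_t^2\psi_1|\le\varkappa^{-1}\kappa/4$, ensures that the gradient-type terms are positive and of order $\tau\lambda\phi$ after choosing $\lambda$ large; the non-degeneracy \eqref{int1}, i.e. $\delta>0$, is what lets one recover control of the full $|\mathbf{D}_Au|^2=|\nabla_Au|^2+(\partial_tu)^2$ rather than just a combination, by using it to absorb the sign-indefinite characteristic term $|\nabla\psi|_A^2-(\partial_t\psi)^2$. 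The zeroth-order term $\tau^3\lambda^4\phi^3 u^2$ appears with the extra powers because it comes from the product of the $\tau$-growth in $P_\tau^+$ with the $\tau\lambda^2\phi|\nabla\psi|_A^2$-type growth in $P_\tau^-$; one picks up $\tau^3\lambda^4$ in the volume and only $\tau^3\lambda^3$ on the boundary (one fewer power of $\lambda$, matching \eqref{C1.0}).

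The strategy for choosing parameters is the standard one: first fix $\lambda=\lambda^\ast$ large, depending only on $\mathfrak{d}$, so that the positive quadratic forms dominate all error terms that are bounded by lower powers of $\lambda$ (using $\|\psi\|_{C^4(\overline Q)}\le\mathfrak{b}$, the ellipticity constant $\varkappa$, the regularity bound $\mathfrak{m}$ on $A$ through $C^{2,1}\hookrightarrow W^{3,\infty}$, and $\kappa$, $\delta$); then fix $\tau=\tau^\ast$ large so that the $\tau$-leading terms absorb the remaining $O(\tau^2)$ and lower remainders, in particular absorbing the contribution of $\int_Q(\mathcal{L}_{A,0}^wu)^2$-unrelated error terms. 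Throughout I would keep all boundary integrals, never discarding them, since the statement is the estimate with the $\int_{\partial Q}$ term present; the boundary terms are exactly the ones produced by each integration by parts and are manifestly controlled by $e^{2\tau\phi}[\tau^3\lambda^3\phi^3u^2+\tau\lambda\phi|\mathbf{D}_Au|^2]d\mu$ after undoing the substitution $v=e^{\tau\phi}u$. Finally I would translate the estimate for $v$ back to $u$; since $|\nabla_A v|\le e^{\tau\phi}(|\nabla_A u|+\tau\lambda\phi|\nabla\psi|_A|u|)$ and similarly for $\partial_t$, the extra terms generated are of lower order in $\tau$ and absorbed on the left.

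The main obstacle I expect is bookkeeping the cross term $2\int_Q P_\tau^+v\,P_\tau^-v\,dxdt$ precisely enough to see that the genuinely positive contribution is controlled below by $\Theta_A(\psi_0)$ and $-\partial_t^2\psi_1$ with the correct powers of $\tau$ and $\lambda$, while every other term — the ones involving $\nabla^2\psi$ contracted against things other than the pseudo-convex form, the first-order commutator terms, and the terms where differentiating $\phi$ costs a power of $\lambda$ but not of $\tau$ — is strictly lower order and hence absorbable. In particular the interplay between \eqref{int1} and \eqref{int2}, which together guarantee that the hyperbolic characteristic direction does not destroy positivity, is the delicate point; getting the constant $\varkappa^{-1}\kappa/4$ to come out right is where the careful constant-chasing lives. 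The rest — the conjugation computation, the self-adjoint/skew-adjoint split, the integrations by parts and identification of boundary terms, and the final two-step choice of $\lambda^\ast$ then $\tau^\ast$ — is routine though lengthy.
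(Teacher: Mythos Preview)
Your overall scheme---conjugate, split into self-adjoint/skew-adjoint parts, study the cross term, collect boundary terms, then undo the conjugation---matches the paper's, and most of what you write is correct in spirit. But there is one genuine gap, and it sits precisely at the point you flag as ``delicate''.

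From the cross term $2\langle L_+v,L_-v\rangle$ alone, the coefficient in front of $(\partial_t v)^2$ is (up to harmless factors) $2\tau\lambda\phi\,\partial_t^2\psi_1$. Condition~\eqref{int2} bounds $|\partial_t^2\psi_1|$ but does not make it nonnegative, so the cross term by itself does \emph{not} give a good sign on the time-derivative term. You also misassign the role of~\eqref{int1}: it is \emph{not} what recovers control of the full $|\mathbf D_Au|^2$; rather, it is what makes the zeroth-order coefficient $\hat a$ satisfy $\hat a\ge \tau^3\lambda^4\phi^3\delta$ (the computation leading to inequality~\eqref{ani15.1} in the paper), since the leading term of $\hat a$ is $2\tau^3\lambda^4\phi^3\bigl[|\nabla\psi|_A^2-(\partial_t\psi)^2\bigr]^2$ and $\delta$ is precisely the minimum of that square.

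What actually produces the positive $(\partial_t v)^2$ term is an extra multiplier step that you do not mention: one computes
\[
\int_Q (L_+v)\,\phi v\,dxdt=-\int_Q\phi|\nabla v|_A^2\,dxdt+\int_Q\phi(\partial_t v)^2\,dxdt+\text{(lower order)}+\text{(boundary)},
\]
and combines this with $\|L_+v\|_{L^2(Q)}^2$ via a Cauchy--Schwarz/Young inequality carrying a free parameter $\epsilon$. Adding $\epsilon\tau\lambda$ times the displayed identity to $2\langle L_+v,L_-v\rangle$ yields the coefficient $(2\varkappa^{-1}\kappa-\epsilon/2)\tau\lambda\phi$ in front of $|\nabla v|_A^2$ and $(\epsilon/2+2\partial_t^2\psi_1)\tau\lambda\phi$ in front of $(\partial_t v)^2$. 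Choosing $\epsilon=2\varkappa^{-1}\kappa$ and invoking~\eqref{int2} (this is exactly where the constant $\varkappa^{-1}\kappa/4$ is used) makes both positive and equal to $\varkappa^{-1}\kappa\,\tau\lambda\phi$. Without this additional step, the self-adjoint/skew-adjoint split alone does not close, and your plan as stated would stall at the $(\partial_t v)^2$ term. Once you insert this multiplier trick, the rest of your outline---absorb lower-order remainders by choosing $\lambda^\ast$ then $\tau^\ast$, keep all boundary terms, and translate back from $v$ to $u$---goes through exactly as in the paper.
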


\begin{proof}
In this proof, $\aleph_j$, $\lambda_j$ and $\tau_j$, $j=0,1,\ldots$, are positive generic constants only depending on $\mathfrak{d}$.

Set $\Phi=e^{-\tau \phi}$ with $\tau >0$. Elementary computations then give
\begin{align*}
&\partial_k\Phi =-\tau \partial_k\phi \Phi  ,
\\
&\partial_{k\ell}\Phi=\left(-\tau \partial^2_{k\ell}\phi +\tau^2\partial_k\phi\partial_\ell\phi\right)\Phi,
\\
&\partial_t\Phi= -\tau \partial_t\phi\Phi  ,
\\
&\partial_t^2\Phi=\left(-\tau  \partial_t^2 \phi  +\tau^2(\partial_t\phi)^2\right)\Phi.
\end{align*}
The preceding two first formulas can be rewritten in the following form
\begin{align*}
&\nabla \Phi =-\tau \Phi \nabla \phi,
\\
&\nabla ^2\Phi =\Phi \left(-\tau \nabla ^2\phi +\tau^2\nabla \phi \otimes \nabla \phi\right).
\end{align*}
For $w\in H^2 (Q;\mathbb{R})$, we obtain
\[
\Phi^{-1}\Delta_A(\Phi w)=\Delta_Aw -2\tau (\nabla w|\nabla \phi)_A+\left[\tau^2  |\nabla \phi|_A^2-\tau\Delta_A\phi\right]w.
\]
Also,
\[
\Phi^{-1}\partial_t^2(\Phi w)=\partial_t^2w-2\tau \partial_t\phi \partial_tw
+\left(-\tau  \partial_t^2 \phi  +\tau^2(\partial_t\phi)^2\right) w.
\]
We decompose $L=\Phi^{-1}\mathcal{L}_{A,0}^w\Phi $ into its self-adjoint part and skew-adjoint part:
\[
L=L_++L_-.
\]
Simple calculations show that
\begin{align*}
&L_+w=\Delta_A w-\partial_t^2w+aw,
\\
&L_-w= (B|\nabla w)+d \partial_tw +bw,
\end{align*}
with
\begin{align*}
&a(x,t)=\tau^2\left(|\nabla \phi|_A^2-(\partial_t\phi)^2\right),
\\
&b(x,t)=-\tau \left(\Delta_A  \phi -\partial_t^2\phi\right),
\\
&B=-2\tau  A\nabla\phi,
\\
&d=2\tau\partial_t\phi .
\end{align*}
We have
\begin{equation}\label{ani1}
\langle L_+w|L_-w\rangle_{L^2(Q)}=\sum_{j=1}^9 I_j,
\end{equation}
where
\begin{align*}
&I_1=\int_Q\Delta_A w(\nabla w|B) dxdt,
\\
&I_2=\int_Q\Delta_A wd\partial_twdxdt,
\\
&I_3=\int_Q\Delta_A wbwdxdt,
\\
&I_4=-\int_Q\partial_t^2w(\nabla w|B) dxdt,
\\
&I_5=-\int_Q\partial_t^2wd\partial_tw dxdt,
\\
&I_6=-\int_Q\partial_t^2wbw dxdt,
\\
&I_7=\int_Qaw(\nabla w|B) dxdt,
\\
&I_8=\int_Qadw\partial_twdxdt,
\\
&I_9=\int_Qabw^2dxdt.
\end{align*}
The most integrations by parts, with respect to the space variable,  we use in  this proof are often based on Green's formula \eqref{Gf}. A first integration by parts then yields 
\begin{align*}
I_1=\int_Q\Delta_Aw (\nabla w|B) dxdt =-\int_Q (\nabla w&|\nabla (\nabla w|B))_Adxdt
\\
&+\int_\Sigma (\nabla w|\nu )_A(\nabla w|B) d\sigma dt.
\end{align*}
Whence
\begin{equation}\label{ani2}
I_1=-\int_Q ([\nabla ^2wB+(B')^t\nabla w]|\nabla w)_Adxdt +\int_\Sigma (\nabla w|\nu )_A(\nabla w|B) d\sigma dt.
\end{equation}
Now as
\begin{align*}
\int_Q \partial_{\ell j}^2w B_ja_{\ell k}\partial_k wdxdt=-\int_Q\partial_\ell w&\partial_j(B_ja_{\ell k})\partial_kwdxdt -\int_Q\partial_\ell wB_ja_{\ell k}\partial_{kj}^2wdxdt
\\
&+\int_\Sigma \partial_\ell wB_j\nu_ja_{\ell k}\partial_k wd\sigma dt,
\end{align*}
we find
\begin{equation}\label{ani3}
2\int_Q (\nabla ^2wB|\nabla w)_Adxdt=-\int_Q(C\nabla w|\nabla w)+\int_\Sigma (B|\nu)|\nabla w|_A^2 d\sigma dt,
\end{equation}
where $C=(\mbox{div}(a_{k\ell}B))$.

Let 
\[
D=C/2-A(B')^t.
\]
We get by putting \eqref{ani3} into \eqref{ani2}
\begin{equation}\label{ani4}
I_1=\int_Q(D\nabla w|\nabla w)dxdt+\int_\Sigma \left[(\nabla w|\nu)_A(\nabla w|B)-2^{-1}(B|\nu)|\nabla w|_A^2\right]d\sigma dt.
\end{equation}
For $I_2$, we obtain by making integrations by parts
\begin{align*}
I_2&=-\int_Q(\nabla w| \nabla (d\partial_tw))_Adxdt+\int_\Sigma (\nabla w|\nu)_A d\partial_twd\sigma dt
\\
&=- \int_Q(\nabla w|\nabla d)_A\partial_twdxdt-\int_Q d (\nabla w|\nabla \partial_tw)dxdt
\\
&\hskip 5cm +\int_\Sigma (\nabla w|\nu)_A d\partial_twd\sigma dt.
\end{align*}
As $A$ is symmetric, we have
\[
(\nabla w|\nabla \partial_tw)_A=\partial_t|\nabla w|_A^2/2.
\]
Hence
\begin{align}
I_2=-\int_Q(\nabla w&|\nabla d)_A \partial_twdxdt+\int_Q\partial_t(d/2) |\nabla w|_A^2dxdt\label{ani5}
\\
&+\int_\Sigma (\nabla w|\nu)_A d\partial_twdxdt-\int_\Omega \left[(d/2) |\nabla w|_A^2\right]_{t=t_1}^{t_2}dx.\nonumber
\end{align}
We have also
\begin{align}
I_3=\int_Q\Delta_A wbw&=-\int_Q b|\nabla w|_A^2dxdt -\int_Q w(\nabla b|\nabla w)_A dxdt \label{ani6}
\\
&\hskip 4cm +\int_\Sigma (\nabla w|\nu )_Abw d\sigma dt\nonumber
\\
&=-\int_Q b|\nabla w|_A^2dxdt +\int_Q \Delta_A(b/2) w^2 dxdt\nonumber
\\
&\hskip 1cm -\int_\Sigma (\nabla (b/2)|\nu)_Aw^2d\sigma dt +\int_\Sigma (\nabla w|\nu )_Abw d\sigma dt.\nonumber
\end{align}
Let $J_1=I_1+I_2+I_3$ and
\begin{align*}
&\mathcal{A}_1=D+\left[\partial_t(d/2)-b\right]A,
\\
&a_1=\Delta_A (b/2),
\\
&\mathcal{B}_1(w)=-(\nabla w|\nabla d)_A \partial_tw,
\\
&g_1(w)=(\nabla w|\nu)_A(\nabla w|B)-(B/2|\nu)|\nabla w|_A^2+(\nabla w|\nu)_A d\partial_tw
\\
&\hskip 2cm -(\nabla (b/2)|\nu)_Aw^2d\sigma + (\nabla w|\nu )_Abw,
\\
&h_1(w)=-\left[(d/2) |\nabla w|_A^2\right]_{t=t_1}^{t_2}.
\end{align*}
Putting together \eqref{ani4} to \eqref{ani6}, we find
\begin{align*}
J_1=\int_Q (\mathcal{A}_1\nabla w|\nabla w)dxdt &+\int_Q\mathcal{B}_1(w)dxdt+\int_Qa_1w^2dxdt.
\\
 &+\int_\Sigma g_1(w)d\sigma dt +\int_\Omega h_1(w) dx.
\end{align*}
Straightforward computations show that
\[
\mathcal{A}_1=2\tau A\nabla ^2\phi A+\tau \Upsilon_A(\phi ).
\]
Whence
\begin{align}
&J_1=\tau \int_Q \left(\left[2A\nabla ^2\phi A+\Upsilon_A(\phi )\right]\nabla w|\nabla w\right)dxdt +\int_Q\mathcal{B}_1(w)dxdt\label{ani7}
\\
&\hskip 3cm +\int_Qa_1w^2dxdt +\int_\Sigma g_1(w)d\sigma dt +\int_\Omega h_1(w) dx.\nonumber
\end{align}
We obtain, by using again an integration by parts, 
\[
I_4=\int_Q\partial_tw\partial_t(\nabla w|B)dxdt -\int_\Omega \left[\partial_tw(\nabla w|B)\right]_{t=t_1}^{t_2}dx.
\]
Hence
\[
I_4=\int_Q\partial_tw (\nabla w|\partial_tB) dxdt+\int_Q\partial_tw(\nabla \partial_t w|B) dxdt -\int_\Omega [\partial_tw(\nabla w|B)]_{t=t_1}^{t_2}dx.
\]
But
\begin{align*}
\int_Q\partial_tw(\nabla \partial_t w|B) dxdt&=-\int_Q\mbox{div}(\partial_twB)\partial_twdxdt+\int_\Sigma  (\partial_tw)^2(B|\nu)d\sigma dt
\\
&=-\int_Q\partial_tw(\nabla \partial_t w|B) dxdt-\int_Q (\partial_tw)^2\mbox{div}(B) dxdt
\\
&\hskip 4.5cm +\int_\Sigma  (\partial_tw)^2(B|\nu)d\sigma dt.
\end{align*}
Therefore
\[
2\int_Q\partial_tw(\nabla \partial_t w|B) dxdt =-\int_Q (\partial_tw)^2\mbox{div}(B) dxdt +\int_\Sigma  (\partial_tw)^2(B|\nu)d\sigma dt.
\]
In consequence
\begin{align}
I_4 =\int_Q\partial_tw &(\nabla w|\partial_tB) dxdt- \int_Q (\partial_tw)^2\mbox{div}(B/2) dxdt\label{ani8}
\\
& +\int_\Sigma  (\partial_tw)^2(B/2|\nu)d\sigma dt -\int_\Omega \left[\partial_tw(\nabla w|B)\right]_{t=t_1}^{t_2}dx.\nonumber
\end{align}
For $I_5$, we have
\begin{align}
I_5=-\int_Q (d/2)\partial_t(\partial_t w)^2  dxdt= &\int_Q\partial_t(d/2)(\partial_tw)^2  dxdt\label{ani9}
\\
&\quad -\int_\Omega \left[(d/2)(\partial_t w)^2)\right]_{t=t_1}^{t_2}dx.\nonumber
\end{align}
Also, 
\begin{align*}
I_6&=-\int_Q\partial_t^2wbwdxdt
\\
&=\int_Q\partial_tb w\partial_tw dxdt +\int_Qb (\partial_tw)^2 dxdt -\int_\Omega \left[b w\partial_tw\right]_{t=t_1}^{t_2}dx
\\
&=\int_Q\partial_t(b/2) \partial_tw^2 dxdt +\int_Qb (\partial_tw)^2 dxdt -\int_\Omega \left[b w\partial_tw\right]_{t=t_1}^{t_2}dx
\end{align*}
and hence
\begin{align}
I_6=-\int_Q\partial_t^2(b/2) w^2 &dxdt +\int_Qb (\partial_tw)^2 dxdt\label{ani10}
\\
&-\int_\Omega \left[b w\partial_tw\right]_{t=t_1}^{t_2}dx+\int_\Omega \left[(\partial_tb/2)w^2\right]_{t=t_1}^{t_2}dx.\nonumber
\end{align}
Let $J_2=I_4+I_5+I_6$ and define
\begin{align*}
&\mathfrak{a}_2=-\mbox{div}(B/2)+\partial_t(d/2)+b,
\\
&a_2=-\partial_t^2(b/2),
\\
&\mathcal{B}_2(w)=\partial_tw (\nabla w| \partial_tB),
\\
&g_2(w)=(\partial_tw)^2(B/2|\nu),
\\
&h_2(w)=- \left[\partial_tw(\nabla w|B)\right]_{t=t_1}^{t_2}- \left[d(\partial_t w)^2)\right]_{t=t_1}^{t_2}- \left[b w\partial_tw\right]_{t=t_1}^{t_2}+\left[(\partial_tb/2) w^2\right]_{t=t_1}^{t_2}.
\end{align*}
A combination of \eqref{ani8} to \eqref{ani10} gives
\begin{align*}
&J_2=\int_Q \mathfrak{a}_2(\partial_t w)^2dxdt +\int_Q\mathcal{B}_2(w)dxdt+\int_Qa_2w^2dxdt
\\
&\hskip 4.5cm +\int_\Sigma g_2(w)d\sigma dt +\int_\Omega h_2(w) dx.\nonumber
\end{align*}
Let us observe that we have by straightforward computations 
\[
\mathfrak{a}_2=2\tau \partial_t^2\phi,\quad \mathcal{B}_2=\mathcal{B}_1.
\]
Hence
\begin{align}
&J_2=2\tau \int_Q \partial_t^2\phi (\partial_t w)^2dxdt +\int_Q\mathcal{B}_1(w)dxdt+\int_Qa_2w^2dxdt\label{ani11}
\\
&\hskip 5cm +\int_\Sigma g_2(w)d\sigma dt +\int_\Omega h_2(w) dx.\nonumber
\end{align}
Let $\tilde{J}=J_1+J_2$. In light of \eqref{ani4} and \eqref{ani11} we deduce that
\begin{align*}
&\tilde{J}=\tau \int_Q \left(\left[2A\nabla ^2\phi A+\Upsilon_A(\phi )\right]\nabla w|\nabla w\right)dxdt+2\tau \int_Q \partial_t^2\phi (\partial_t w)^2dxdt 
\\
&\hskip 3cm -4\tau \int_Q\partial_tw(\nabla w|\nabla \partial_t \phi)_Adxdt+\int_Q\tilde{a}w^2dxdt
\\
&\hskip 6cm +\int_\Sigma \tilde{g}(w)d\sigma dt +\int_\Omega \tilde{h}(w) dx,
\end{align*}
where 
\[
\tilde{a}=a_1+a_2,\quad \tilde{g}=g_1+g_2,\quad \tilde{h}=h_1+h_2.
\]
As $\phi =e^{\lambda \psi}$, we have 
\begin{align*}
&\nabla ^2\phi =\lambda^2\phi (\nabla \psi\otimes \nabla \psi)+\lambda \phi \nabla ^2\psi,
\\
&
\partial_t^2\phi =\lambda^2\phi (\partial_t\psi)^2+\lambda \phi \partial_t^2\psi.
\end{align*}
This and the fact that $\nabla \partial_t\psi =0$ imply
\begin{align*}
&\left(\nabla ^2\phi A\nabla w|\nabla w\right)_A+\partial_t^2\phi (\partial_t w)^2 -2\partial_tw(\nabla w|\nabla \partial_t \phi)_A=
\\
&\hskip 2cm \lambda \phi\left[(\nabla ^2\psi A\nabla w|\nabla w)_A+\partial_t^2\psi (\partial_tw)^2\right]
\\
&\hskip 2.5cm +\lambda ^2\phi \left[(\nabla \psi|\nabla w)_A^2+(\partial_t\psi)^2(\partial_tw)^2-2\partial_t\psi \partial_tw(\nabla \psi |\nabla w)_A\right].
\end{align*}
That is we have
\begin{align*}
&\left(\nabla ^2\phi A\nabla w|\nabla w\right)_A+\partial_t^2\phi (\partial_t w)^2 -2\partial_tw(\nabla w|\nabla \partial_t \phi)_A=
\\
&\hskip 2cm \lambda \phi\left[(\nabla ^2\psi A\nabla w|\nabla w)_A+\partial_t^2\psi (\partial_tw)^2\right]
 +\lambda ^2\phi \left[(\nabla \psi |\nabla w)_A-\partial_t\psi \partial_tw\right]^2,
\end{align*}
from which we deduce, by noting that $\Upsilon_A(\phi)=\lambda\phi \Upsilon_A(\psi)$,
\begin{align}
&\tilde{J}\ge \tau \lambda \int_Q \phi\left[\left(\mathcal{A}A^{1/2}\nabla w|A^{1/2}\nabla w\right)+\partial_t^2\psi (\partial_tw)^2\right]dxdt\label{ani12}
\\
&\hskip 3.5cm +\int_Q\tilde{a}w^2dxdt+\int_\Sigma \tilde{g}(w)d\sigma dt +\int_\Omega \tilde{h}(w) dx.\nonumber
\end{align}
Here
\[
\mathcal{A}=2A^{1/2}\nabla^2\psi A^{1/2} +A^{-1/2}\Upsilon_A(\psi)) A^{-1/2}=A^{-1/2}\Theta_A(\psi)A^{-1/2}.
\]
For $I_7$, we have
\begin{align}
I_7=\int_Qaw(B|\nabla w)&dxdt=\int_Qa(B/2|\nabla w^2)dxdt \label{ani13}
\\
&=-\int_Q  \mbox{div}(aB/2)w^2dxdt
+ \int_\Sigma a(B/2|\nu)w^2d\sigma dt.\nonumber
\end{align}
Finally,
\begin{equation}\label{ani14}
I_8=\int_Q(ad/2)\partial_tw^2dxdt=-\int_Q\partial_t(ad/2)w^2dxdt +\int_\Omega \left[(ad/2)w^2\right]_{t=t_1}^{t_2}dx.
\end{equation}
Define $\hat{J}=I_7+I_8+I_9$ and 
\begin{align*}
&\hat{a}=-\mbox{div}(aB/2)-\partial_t(ad/2)+ab+\Delta_A (b/2)-\partial_t^2(b/2),
\\
&\hat{g}(w)=\tilde{g}(w)+a(B/2|\nu)w^2,
\\
&\hat{h}(w)=\tilde{h}(w)+ \left[(ad/2)w^2\right]_{t=t_1}^{t_2}.
\end{align*}
Then we have from \eqref{ani12} to \eqref{ani14}
\begin{align}
\langle &L_+w|L_-w\rangle_{L^2(Q)}=\tilde{J}+\hat{J}
\\
&\quad \ge \tau \lambda \int_Q \phi[\left(\mathcal{A}A^{1/2}\nabla w|A^{1/2}\nabla w\right)+\partial_t^2\psi (\partial_tw)^2]dxdt\label{ani15}
+\int_Q\hat{a}w^2dxdt\nonumber
\\
&\hskip 6.5cm +\int_\Sigma \hat{g}(w)d\sigma dt +\int_\Omega \hat{h}(w) dx.\nonumber
\end{align}
We prove (see details in the end of this proof) that
\begin{equation}\label{ani15.1}
\hat{a}\ge \tau^3\lambda^4\phi^3\delta ,\quad \lambda \ge \lambda_1,\; \tau \ge \tau_1.
\end{equation}
Whence
\begin{align*}
&2\langle L_+w|L_-w\rangle_{L^2(Q)}\ge 2\varkappa^{-1}\kappa \tau \lambda \int_Q \phi|\nabla w|_A^2dxdt 
\\
&\hskip 1.5cm +2\tau \lambda\int_Q\phi \partial_t^2\psi (\partial_tw)^2dxdt +2\tau^3\lambda^4\delta \int_Q\phi^3w^2dxdt+\mathscr{B}_0(w),
\end{align*}
for $\lambda \ge \lambda_1$ and $\tau \ge \tau_1$, with 
\[
\mathscr{B}_0(w)=\int_\Sigma \hat{g}(w)d\sigma dt +\int_\Omega \hat{h}(w) dx.
\]
On the other hand, we find by making twice integration by parts
\begin{align*}
\int_Q(L_+w)\phi wdxdt=-\int_Q\phi |\nabla w|_A^2&+\int_Q\phi (\partial_tw)^2dxdt
\\
&+\int_Q(a-b/2)w^2dxdt+\mathscr{B}_1(w),
\end{align*}
with
\begin{align*}
\mathscr{B}_1(w)=\int_\Sigma &\phi (\nabla w|\nu)_Awd\sigma dt-\frac{1}{2}\int_\Sigma (\nabla \phi|\nu)_Aw^2d\sigma dt
\\
&\quad + \int_\Omega -[\phi w\partial_tw]_{t=t_1}^{t_2}dx+\frac{1}{2}\int_\Omega \left[w^2\partial_t\phi\right]_{t=t_1}^{t_2}dx.
\end{align*}
Let $\epsilon >0$ to be determined later. Then Cauchy-Schwarz's inequality yields
\begin{align*}
\int_Q(L_+w)^2dxdt&+\epsilon ^2\tau^2\lambda ^2\int_Q\phi ^2w^2dxdt\ge 
\\
&-\frac{\epsilon \tau \lambda }{2}\int_Q\phi |\nabla w|_A^2dxdt +\frac{\epsilon \tau \lambda }{2}\int_Q\phi (\partial_tw)^2dxdt
\\
&\hskip 1.5cm +\frac{\epsilon \tau \lambda }{2}\int_Q(a-b/2)w^2dxdt+\frac{\epsilon \tau \lambda }{2}\mathscr{B}_1(w).
\end{align*}
Using that $a=\tau^2\lambda^2\phi^2\left(|\nabla \psi|_A^2-(\partial_t\psi)^2\right)$ we get
\begin{align*}
\int_Q&(L_+w)^2dxdt\ge 
\\
&\hskip 1cm -\frac{\epsilon \tau \lambda }{2}\int_Q\phi |\nabla w|_A^2dxdt +\frac{\epsilon \tau \lambda }{2}\int_Q\phi (\partial_tw)^2dxdt
\\
&\hskip 2cm +\frac{\epsilon \tau^3 \lambda ^3}{2}\int_Q\phi^3\left(|\nabla \psi|_A^2-(\partial_t\psi)^2\right)w^2dxdt-\epsilon ^2\tau^2\lambda ^3\int_Q\phi ^2w^2dxdt
\\
&\hskip 6.5cm -\frac{\epsilon \tau \lambda }{2}\int_Q(b/2)w^2dxdt+\frac{\epsilon \tau \lambda}{2}\mathscr{B}_1(w).
\end{align*}
Hence
\begin{align*}
&2\langle L_+w|L_-w\rangle_{L^2(Q)}+\int_Q(L_+w)^2dxdt \ge (2\varkappa^{-1}\kappa -\epsilon/2) \tau \lambda \int_Q \phi|\nabla w|_A^2dxdt 
\\
&+\tau \lambda\int_Q\phi (\epsilon/2+ 2\partial_t^2\psi) (\partial_tw)^2dxdt+2\tau^3\lambda^4\delta \int_Q\phi^3w^2dxdt
\\
&\hskip 1cm +\frac{\epsilon \tau^3 \lambda ^3}{2}\int_Q\phi^3(|\nabla \psi|_A^2-(\partial_t\psi)^2)w^2dxdt
\\
&\hskip 1.5cm -\epsilon ^2\tau^2\lambda ^2\int_Q\phi ^2w^2dxdt-\frac{\epsilon \tau \lambda ^2}{2}\int_Q(b/2)w^2dxdt+\mathscr{B}(w),\quad \lambda \ge \lambda_1,\; \tau \ge \tau_1.
\end{align*}
Here $\mathscr{B}(w)=\mathscr{B}_0(w)+\frac{\epsilon \tau \lambda }{2}\mathscr{B}_1(w)$.

We take $\epsilon=2\varkappa^{-1}\kappa$ in the preceding inequality and we use inequality \eqref{int2}. We obtain, by noting that in the right hand side of the last inequality, the fourth, fifth and sixth terms can be absorbed by the third term, 
\begin{align*}
& 2\langle L_+w|L_-w\rangle_{L^2(Q)}+\int_Q(L_+w)^2dxdt \ge \varkappa^{-1}\kappa \tau \lambda \int_Q \phi[|\nabla w|_A^2+(\partial_tw)^2]dxdt
\\
&\hskip 4.5cm +\tau^3\lambda^4\delta\int_Q\phi^3w^2dxdt+\mathscr{B}(w) ,\quad \lambda \ge \lambda_2,\; \tau \ge \tau_2.
\end{align*}
We find by making elementary calculations
\[
\aleph_3|\mathscr{B}(w)| \le \int_{\partial Q} e^{2\tau \phi}\left[\tau^3\lambda ^3\phi^3u^2+\tau \lambda \phi \left(|\nabla_A u|^2+(\partial_tu)^2\right)\right]d\mu,
\]
for $\lambda\ge \lambda_3$ and $\mu \ge \mu_3$. This and 
\[
\|Lw\|_{L^2(Q)}^2\ge 2\langle L_+w|L_-w\rangle_{L^2(Q)}+\|L_+w\|_{L^2(Q)}^2
\]
imply
\begin{align}
&\aleph\int_Q\left[\tau^3\lambda ^4\phi^3 w^2+\tau \lambda  \phi |\mathbf{D}_A w|^2\right]dxdt \label{C1.1}
\\
&\hskip1cm \le \int_Q\left(\mathcal{L}_{A,0}^ww\right)^2dxdt +\int_{\partial Q} \left[\tau^3\lambda ^3\phi^3w^2+\tau \lambda \phi |\mathbf{D}_A w|^2\right]d\mu .\nonumber
\end{align}
We take in this inequality $w=\Phi^{-1}u$ with $u\in H^2 (Q;\mathbb{R})$. In light of the identities
\begin{align*}
&\Phi^{-1}\nabla u= -\tau\lambda w\nabla \psi +\nabla w,
\\
& \Phi^{-1}\partial_t u= -\tau\lambda w\partial_t \psi +\partial_t w,
\end{align*}
we obtain an inequality similar to \eqref{C1.0} which leads to \eqref{C1.0} by observing that the additional terms in the right hand side appearing in this intermediate inequality can be absorbed by the terms in left hand side.

\textit{Proof of \eqref{ani15.1}.} Set
\[
\chi =|\nabla \psi|_A^2-(\partial_t\psi)^2.
\]
We have
\begin{align*}
&\Delta_A\phi=\mbox{div}(A\nabla e^{\lambda \psi})=\mbox{div}(\lambda \phi A\nabla \psi)=\lambda^2\phi |\nabla \psi|_A^2+\lambda \phi\Delta_A\psi,
\\
&\partial_t^2\phi=\partial_t(\lambda \phi\partial_t\psi)=\lambda^2\phi(\partial_t\psi)^2+\lambda \phi \partial_t^2\psi.
\end{align*}
That is 
\begin{equation}\label{tc1}
\mathcal{L}_{A,0}^w\phi= \lambda^2\phi\chi +\lambda\phi \mathcal{L}_{A,0}^w\psi.
\end{equation}
In light of \eqref{tc1} we get
\begin{align*}
&a(x,t)=\tau^2\left(|\nabla \phi|_A^2-(\partial_t\phi)^2\right)=\tau^2\lambda^2\phi^2\chi,
\\
&b(x,t)=-\tau \left(\Delta_A  \phi -\partial_t^2\phi\right)=-\tau\lambda^2\phi\chi -\tau\lambda\phi \mathcal{L}_{A,0}^w\psi,
\\
&B=-2\tau  A\nabla\phi=-2\tau \lambda \phi A\nabla \psi ,
\\
&d=2\tau\partial_t\phi= 2\tau \lambda \phi\partial_t\psi .
\end{align*}
Since 
\[
-aB/2= \tau^3\lambda^3\phi^3\chi A\nabla \psi,
\]
we find
\begin{equation}\label{tc2}
-\mbox{div}(aB/2)=3\tau^3\lambda^4\phi^3\chi |\nabla \psi |^2+\tau^3\lambda^3\phi^3\mbox{div}(\chi A\nabla \psi).
\end{equation}
Also, as
\[
-ad/2=-\tau^3\lambda^3\phi^3\chi \partial_t\psi,
\]
we obtain
\begin{equation}\label{tc3}
-\partial_t(ad/2)=-3\tau^3\lambda^4\phi^3\chi (\partial_t\psi)^2- \tau^3\lambda^3\phi^3\partial_t(\chi \partial_t\psi).
\end{equation}
We get by putting together \eqref{tc2} and \eqref{tc3} 
\begin{equation}\label{tc4}
-\mbox{div}(aB/2)-\partial_t(ad/2)= 3\tau^3\lambda^4\phi^3\chi^2+\tau^3\lambda^3\phi^3\left[\mbox{div}(\chi A\nabla \psi)-\partial_t(\chi \partial_t\psi)\right].
\end{equation}
 On the other hand,
 \begin{equation}\label{tc5}
 ab=-\tau^3\lambda^4\phi^3\chi^2-\tau^3\lambda ^3\phi^3\chi\mathcal{L}_{A,0}^w\psi .
 \end{equation}
Set
\[
\chi_1= \mbox{div}(\chi A\nabla \psi)-\partial_t(\chi \partial_t\psi)-\chi\mathcal{L}_{A,0}^w\psi=(\nabla \chi |\nabla\psi)_A-\partial_t\chi\partial_t\psi.
\]
A combination of \eqref{tc4} and \eqref{tc5} yields
 \begin{equation}\label{tc6}
 -\mbox{div}(aB/2)-\partial_t(ad/2)+ab=2\tau^3\lambda^4\phi^3\chi^2+\tau^3\lambda^3\phi^3\chi_1 .
 \end{equation}
We have again  from \eqref{tc1}
\begin{align*}
\mathcal{L}_{A,0}^wb&=\mathcal{L}_{A,0}^w\left( -\tau\lambda^2\phi\chi -\tau\lambda\phi \mathcal{L}_{A,0}^w\psi \right)
\\
&=- \left(\lambda^2\phi\chi +\lambda\phi \mathcal{L}_{A,0}^w\psi\right)\left( \tau\lambda^2\chi +\tau\lambda\mathcal{L}_{A,0}^w\psi \right)
\\
&\hskip 2cm -\phi \left( \tau\lambda^2\mathcal{L}_{A,0}^w\chi +\tau\lambda\left(\mathcal{L}_{A,0}^w\right)^2\psi \right).
\end{align*}
Therefore 
\begin{equation}\label{tc7}
\mathcal{L}_{A,0}^w(b/2)\ge -\tau \lambda ^4\phi \delta ,\quad \lambda \ge \lambda_4,\; \tau >0,
\end{equation}
where we used that $\chi ^2\ge \delta$.

Inequality \eqref{ani15.1} then follows by combining \eqref{tc6} and \eqref{tc7} and using that
\[
\hat{a}= -\mbox{div}(aB/2)-\partial_t(ad/2)+ab+\mathcal{L}_{A,0}^w(b/2).
\]
\end{proof}

Define $\partial_{\nu_A}\psi_0$ by
\[
\partial_{\nu_A}\psi_0=(\nabla\psi_0|\nu)_A 
\]
and set
\[ 
\Gamma_+=\Gamma_+^{\psi_0}=\left\{x\in \Gamma ;\; \partial_{\nu_A}\psi_0(x)>0\right\},\quad \Sigma_+=\Sigma_+^{\psi_0}=\Gamma_+\times (t_1,t_2).
\]

Let $w\in H^2(Q,\mathbb{R})$ satisfying $w=0$ on $\Sigma$. In that case it is straightforward to check that $\hat{g}(w)$ defined in the preceding proof takes the form
\begin{equation}\label{id1}
\hat{g}(w)=-\tau \lambda \phi (\partial_\nu w)^2|\nu|_A^2\partial_{\nu_A}\psi_0.
\end{equation}
Furthermore, if $u=\Phi w$ then 
\begin{equation}\label{id2}\partial_\nu u=\Phi \partial_\nu w. 
\end{equation}
In light of  identities \eqref{id1} and \eqref{id2}, slight modifications of the last part of the preceding proof enable us to establish the following result.

\begin{theorem}\label{CarlemanTheorem1.2}
There exist three constants $\aleph=\aleph(\mathfrak{d})$, $\lambda^\ast=\lambda^\ast(\mathfrak{d})$ and $\tau^\ast =\tau^\ast(\mathfrak{d})$ so that, for any $\lambda \ge \lambda^\ast$, $\tau \ge \tau^\ast$ and $u\in H^2(Q,\mathbb{R})$ satisfying $u=0$ on $\Sigma$ and $u=\partial_tu=0$ in $\Omega \times\{t_1,t_2\}$, we have
\begin{align}
&\aleph\int_Qe^{2\tau \phi}\left[\tau^3\lambda ^4\phi^3 u^2+\tau \lambda  \phi |\mathbf{D}_A u|^2\right]dxdt \label{C1.1}
\\
&\hskip1cm \le \int_Qe^{2\tau \phi}\left[\mathcal{L}_{A,0}^wu\right]^2dxdt +\tau \lambda \int_{\Sigma_+} e^{2\tau \phi}\phi (\partial_\nu u)^2 d\sigma dt.\nonumber
\end{align}
\end{theorem}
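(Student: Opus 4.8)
The proof is a variant of the one just given for Theorem~\ref{CarlemanTheorem1}: I re-run that argument verbatim up to the point where all boundary contributions have been gathered into the single quantity $\mathscr{B}(w)=\mathscr{B}_0(w)+\tfrac{\epsilon\tau\lambda}{2}\mathscr{B}_1(w)$, and then use the homogeneous data on $u$ to discard nearly all of it. Put $w=\Phi^{-1}u=e^{\tau\phi}u$ as before. Because $\phi$ is smooth and bounded, the hypotheses transfer to $w$: since $u=0$ on $\Sigma$ one has $w=0$ on $\Sigma$, and since in addition $\partial_tu=0$ there (as $u(x,\cdot)\equiv0$ for $x\in\Gamma$) also $\partial_tw=0$ on $\Sigma$; since $u=\partial_tu=0$ on $\Omega\times\{t_1,t_2\}$, the functions $w(\cdot,t_i)$ and $\partial_tw(\cdot,t_i)$ vanish identically on $\Omega$, hence $w=\partial_tw=0$ and $\nabla w=0$ at $t=t_1,t_2$. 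Consequently each of $h_1(w)$, $h_2(w)$, $\hat h(w)$ and the $\Omega$-part of $\mathscr{B}_1(w)$ — all of them brackets $[\,\cdot\,]_{t=t_1}^{t_2}$ assembled from $w,\partial_tw,\nabla w$ — is zero, so $\int_\Omega\hat h(w)\,dx=0$; and the $\Sigma$-part of $\mathscr{B}_1(w)$, namely $\int_\Sigma\phi(\nabla w|\nu)_Aw\,d\sigma dt-\tfrac12\int_\Sigma(\nabla\phi|\nu)_Aw^2\,d\sigma dt$, vanishes since $w=0$ on $\Sigma$. Hence $\mathscr{B}(w)=\int_\Sigma\hat g(w)\,d\sigma dt$.

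Next I simplify $\hat g(w)=g_1(w)+g_2(w)+a(B/2|\nu)w^2$ on $\Sigma$. Every monomial carrying a factor $w$, $w^2$ or $\partial_tw$ drops (recall $w=\partial_tw=0$ on $\Sigma$), leaving only $(\nabla w|\nu)_A(\nabla w|B)-(B/2|\nu)|\nabla w|_A^2$. Since $w=0$ on $\Sigma$ forces $\nabla w=(\partial_\nu w)\nu$ there, and $B=-2\tau\lambda\phi A\nabla\psi_0$, a short computation collapses this to the expression already recorded in \eqref{id1}:
\[
\hat g(w)=-\tau\lambda\phi\,(\partial_\nu w)^2\,|\nu|_A^2\,\partial_{\nu_A}\psi_0\qquad\text{on }\Sigma .
\]
Now split $\Sigma=\Sigma_+\cup(\Sigma\setminus\Sigma_+)$. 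On $\Sigma\setminus\Sigma_+$ one has $\partial_{\nu_A}\psi_0\le0$, so $\hat g(w)\ge0$ there and this part only helps the left-hand side: it may simply be dropped. Therefore the penultimate inequality from the proof of Theorem~\ref{CarlemanTheorem1} becomes, for $\lambda,\tau$ large,
\[
2\langle L_+w|L_-w\rangle_{L^2(Q)}+\int_Q(L_+w)^2dxdt\ \ge\ \varkappa^{-1}\kappa\,\tau\lambda\!\int_Q\phi\big[|\nabla w|_A^2+(\partial_tw)^2\big]dxdt+\tau^3\lambda^4\delta\!\int_Q\phi^3w^2dxdt-\tau\lambda\!\int_{\Sigma_+}\!\phi\,(\partial_\nu w)^2|\nu|_A^2\,\partial_{\nu_A}\psi_0\,d\sigma dt .
\]
Using $\|Lw\|_{L^2(Q)}^2\ge2\langle L_+w|L_-w\rangle_{L^2(Q)}+\|L_+w\|_{L^2(Q)}^2$ and bounding $|\nu|_A^2\,\partial_{\nu_A}\psi_0$ by a constant depending only on $\mathfrak d$, I obtain
\[
\aleph\int_Q\big[\tau^3\lambda^4\phi^3w^2+\tau\lambda\phi|\mathbf{D}_Aw|^2\big]dxdt\le\int_Q(\mathcal{L}_{A,0}^ww)^2dxdt+\tau\lambda\int_{\Sigma_+}\phi\,(\partial_\nu w)^2\,d\sigma dt .
\]

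Finally I undo the substitution, setting $w=\Phi^{-1}u=e^{\tau\phi}u$. In $Q$ the identities $\Phi^{-1}\nabla u=-\tau\lambda w\nabla\psi+\nabla w$ and $\Phi^{-1}\partial_tu=-\tau\lambda w\partial_t\psi+\partial_tw$ turn the bulk integrals into $\int_Qe^{2\tau\phi}[\tau^3\lambda^4\phi^3u^2+\tau\lambda\phi|\mathbf{D}_Au|^2]\,dxdt$, up to terms absorbable by the left-hand side for $\lambda,\tau$ large, exactly as at the end of the proof of Theorem~\ref{CarlemanTheorem1}; moreover $\|Lw\|_{L^2(Q)}^2=\int_Qe^{2\tau\phi}(\mathcal{L}_{A,0}^wu)^2dxdt$. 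On $\Sigma$, where $u=0$, the corrector in $\Phi^{-1}\nabla u$ disappears, so by \eqref{id2} one has $\partial_\nu w=e^{\tau\phi}\partial_\nu u$ and the boundary term becomes $\tau\lambda\int_{\Sigma_+}e^{2\tau\phi}\phi\,(\partial_\nu u)^2\,d\sigma dt$. This gives \eqref{C1.1}. I expect the only delicate points to be bookkeeping: verifying that the three brackets $h_1(w),h_2(w),\hat h(w)$ and $\mathscr{B}_1(w)$ genuinely vanish under the given data — in particular that $\nabla w$, and not merely $w$, is zero at $t=t_1,t_2$ — and checking \eqref{id1}; both are elementary once one keeps track of which terms carry a factor of $w$ or $\partial_tw$.
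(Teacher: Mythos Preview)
Your proof is correct and follows precisely the route the paper intends: the paper merely records identities \eqref{id1} and \eqref{id2} and says that ``slight modifications of the last part of the preceding proof'' yield the result, and what you have written is exactly those modifications spelled out in detail. One cosmetic slip: in your penultimate display the right-hand side should read $\int_Q(Lw)^2\,dxdt$ rather than $\int_Q(\mathcal{L}_{A,0}^ww)^2\,dxdt$ (the paper has the same lapse in its analogous inequality), but you correctly identify $\|Lw\|_{L^2(Q)}^2=\int_Qe^{2\tau\phi}(\mathcal{L}_{A,0}^wu)^2\,dxdt$ in the very next line, so the argument is sound.
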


Let us see that Theorem \ref{CarlemanTheorem1} remains valid whenever  we add to $\mathcal{L}_{A,0}^w$ a first order operator. Consider then the operator
\[
\mathcal{L}_A^w=\mathcal{L}_{A,0}^w+q_0\partial_t+\sum_{j=1}^n q_j\partial_j+p,
\]
where $q_0,\ldots ,q_n$ and $p$ belong to $L^\infty (Q,\mathbb{C})$ and satisfy
\[
\max_{0\le i\le n}\|q_i\|_{L^\infty(Q)}\le \mathfrak{m},\quad \|p\|_{L^\infty(Q)} \le \mathfrak{m}.
\]
Let $u=v+iw\in H^2(Q,\mathbb{C})$ and apply Theorem \ref{CarlemanTheorem1} to both $v$ and $w$. We obtain by adding side by side the inequalities we obtain by taking in \eqref{C1.0} $u=v$ and then $u=w$
\begin{align}
&\aleph\int_Qe^{2\tau \phi}\left[\tau^3\lambda ^4\phi^3 |u|^2+\tau \lambda  \phi |\mathbf{D}_A u|^2\right]dxdt \label{C28}
\\
&\hskip1cm \le \int_Qe^{2\tau \phi}|\mathcal{L}_{A,0}^wu|^2dxdt +\int_{\partial Q} e^{2\tau \phi}\left[\tau^3\lambda ^3\phi^3|u|^2+\tau \lambda \phi |\mathbf{D}_A u|^2\right]d\mu .\nonumber
\end{align}
Since 
\[
|\mathcal{L}_{A,0}^wu|^2\le 2|\mathcal{L}_A^wu|^2+2(n+2)\mathfrak{m}^2(\varkappa |\nabla u|_A^2+|\partial_tu|^2+|u|^2)
\]
and the term
\[
2(n+2)\mathfrak{m}^2\int_Qe^{2\tau \phi}(\varkappa |\nabla u|_A^2+|\partial_tu|^2+|u|^2)dxdt
\]
can be absorbed by the left hand side of \eqref{C28}, we get the following result:

\begin{corollary}\label{CarlemanCorollary1}
We find three constants $\aleph=\aleph(\mathfrak{d})$, $\lambda^\ast=\lambda^\ast(\mathfrak{d})$ and $\tau^\ast =\tau^\ast(\mathfrak{d})$ so that, for any $\lambda \ge \lambda^\ast$, $\tau \ge \tau^\ast$ and $u\in H^2(Q,\mathbb{C})$, we have
\begin{align}
&\aleph\int_Qe^{2\tau \phi}\left[\tau^3\lambda ^4\phi^3 |u|^2+\tau \lambda  \phi |\mathbf{D}_A u|^2\right]dxdt \label{C29}
\\
&\hskip1cm \le \int_Qe^{2\tau \phi}|\mathcal{L}_A^wu|^2dxdt+\int_{\partial Q} e^{2\tau \phi}\left[\tau^3\lambda ^3\phi^3|u|^2+\tau \lambda \phi |\mathbf{D}_A u|^2\right]d\mu ,\nonumber
\end{align}
\end{corollary}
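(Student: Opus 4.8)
The plan is to deduce the perturbed estimate \eqref{C29} from the unperturbed Carleman inequality of Theorem~\ref{CarlemanTheorem1} by a splitting-and-absorption argument, without performing any new integration by parts.

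First I would pass from real to complex valued functions. Writing $u=v+iw$ with $v,w\in H^2(Q,\mathbb{R})$ the real and imaginary parts of $u$, I would apply \eqref{C1.0} once with $u$ replaced by $v$ and once with $u$ replaced by $w$, then add the two inequalities side by side. Using $|u|^2=v^2+w^2$, $|\mathbf{D}_Au|^2=|\mathbf{D}_Av|^2+|\mathbf{D}_Aw|^2$, $|\mathcal{L}_{A,0}^wu|^2=(\mathcal{L}_{A,0}^wv)^2+(\mathcal{L}_{A,0}^ww)^2$ and the same identities on $\partial Q$, this gives \eqref{C28}: the same Carleman inequality for $\mathcal{L}_{A,0}^w$, now valid for every $u\in H^2(Q,\mathbb{C})$, with the boundary integral over $\partial Q$ carried along unchanged.

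Next I would estimate the perturbation. Since $\mathcal{L}_{A,0}^wu=\mathcal{L}_A^wu-\bigl(q_0\partial_tu+\sum_{j=1}^nq_j\partial_ju+pu\bigr)$ and $\max_i\|q_i\|_{L^\infty(Q)},\|p\|_{L^\infty(Q)}\le\mathfrak{m}$, Cauchy--Schwarz together with the ellipticity bound $|\nabla u|^2\le\varkappa|\nabla u|_A^2$ yields the pointwise inequality $|\mathcal{L}_{A,0}^wu|^2\le 2|\mathcal{L}_A^wu|^2+2(n+2)\mathfrak{m}^2\bigl(\varkappa|\nabla u|_A^2+|\partial_tu|^2+|u|^2\bigr)$. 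Substituting this into \eqref{C28} produces, on the right-hand side, the error term $2(n+2)\mathfrak{m}^2\int_Qe^{2\tau\phi}\bigl(\varkappa|\nabla u|_A^2+|\partial_tu|^2+|u|^2\bigr)dxdt$.

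Finally I would absorb this error term into the left-hand side of \eqref{C28}. Since $\psi\ge0$ we have $\phi=e^{\lambda\psi}\ge1$ on $\overline{Q}$, so the left-hand side of \eqref{C28} dominates $\aleph\tau\lambda\int_Qe^{2\tau\phi}\bigl(|\nabla u|_A^2+|\partial_tu|^2\bigr)dxdt$ and $\aleph\tau^3\lambda^4\int_Qe^{2\tau\phi}|u|^2dxdt$; bounding $\varkappa|\nabla u|_A^2+|\partial_tu|^2+|u|^2\le\max(\varkappa,1)\bigl(|\nabla u|_A^2+|\partial_tu|^2\bigr)+|u|^2$, it suffices to require $\aleph\tau\lambda\ge 4(n+2)\mathfrak{m}^2\max(\varkappa,1)$ and $\aleph\tau^3\lambda^4\ge 4(n+2)\mathfrak{m}^2$, which holds once $\tau\ge\tau^\ast$ and $\lambda\ge\lambda^\ast$ with the thresholds enlarged by an amount depending only on $\aleph,n,\mathfrak{m},\varkappa$. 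The error term is then at most half the left-hand side of \eqref{C28}; absorbing it and renaming $\aleph,\lambda^\ast,\tau^\ast$ (still functions of $\mathfrak{d}$ only) yields \eqref{C29}. I expect the only delicate point to be the bookkeeping guaranteeing that the new constants depend solely on $\mathfrak{d}$; the argument is otherwise entirely routine and presents no real obstacle.
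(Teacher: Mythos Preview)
Your proof is correct and follows essentially the same approach as the paper: split $u=v+iw$, apply Theorem~\ref{CarlemanTheorem1} to each part and add to obtain \eqref{C28}, then use the pointwise bound $|\mathcal{L}_{A,0}^wu|^2\le 2|\mathcal{L}_A^wu|^2+2(n+2)\mathfrak{m}^2(\varkappa|\nabla u|_A^2+|\partial_tu|^2+|u|^2)$ and absorb the lower-order error into the left-hand side by enlarging $\lambda^\ast,\tau^\ast$. You have in fact spelled out the absorption step in more detail than the paper does.
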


Finally, we note that the preceding arguments allow us also  to prove the following corollary.

\begin{corollary}\label{CarlemanCorollary2}
There exist three constants $\aleph=\aleph(\mathfrak{d})$, $\lambda^\ast=\lambda^\ast(\mathfrak{d})$ and $\tau^\ast =\tau^\ast(\mathfrak{d})$ so that, for any $\lambda \ge \lambda^\ast$, $\tau \ge \tau^\ast$ and $u\in H^2(Q,\mathbb{C})$ satisfying $u=0$ on $\Sigma$ and $u=\partial_tu=0$ in $\Omega \times\{t_1,t_2\}$, we have
\begin{align}
&\aleph\int_Qe^{2\tau \phi}\left[\tau^3\lambda ^4\phi^3 |u|^2+\tau \lambda  \phi |\mathbf{D}_A u|^2\right]dxdt \label{C1.1}
\\
&\hskip1cm \le \int_Qe^{2\tau \phi}\left|\mathcal{L}_A^wu\right|^2dxdt +\tau \lambda \int_{\Sigma_+} e^{2\tau \phi}\phi |\partial_\nu u|^2 d\sigma dt,\nonumber
\end{align}
\end{corollary}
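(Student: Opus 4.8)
The plan is to obtain Corollary \ref{CarlemanCorollary2} from Theorem \ref{CarlemanTheorem1.2} by repeating, in the present setting, the complexification and perturbation arguments already used to pass from Theorem \ref{CarlemanTheorem1} to Corollary \ref{CarlemanCorollary1}. First I would note that if $u=v+iw\in H^2(Q,\mathbb{C})$ satisfies $u=0$ on $\Sigma$ and $u=\partial_tu=0$ in $\Omega\times\{t_1,t_2\}$, then $v=\Re u$ and $w=\Im u$ lie in $H^2(Q,\mathbb{R})$ and satisfy the same homogeneous conditions. Applying Theorem \ref{CarlemanTheorem1.2} to $v$ and to $w$ and adding the two inequalities side by side, using
\[
|u|^2=v^2+w^2,\quad |\mathbf{D}_Au|^2=|\mathbf{D}_Av|^2+|\mathbf{D}_Aw|^2,\quad |\mathcal{L}_{A,0}^wu|^2=(\mathcal{L}_{A,0}^wv)^2+(\mathcal{L}_{A,0}^ww)^2,
\]
together with $|\partial_\nu u|^2=(\partial_\nu v)^2+(\partial_\nu w)^2$, yields the complex-valued analogue of \eqref{C1.1}, but still with $\mathcal{L}_{A,0}^w$ on the right-hand side.

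Next I would insert the pointwise bound
\[
|\mathcal{L}_{A,0}^wu|^2\le 2|\mathcal{L}_A^wu|^2+2(n+2)\mathfrak{m}^2\left(\varkappa|\nabla u|_A^2+|\partial_tu|^2+|u|^2\right),
\]
which follows at once from $\mathcal{L}_A^w=\mathcal{L}_{A,0}^w+q_0\partial_t+\sum_{j=1}^nq_j\partial_j+p$, the uniform bounds $\|q_i\|_{L^\infty(Q)},\|p\|_{L^\infty(Q)}\le\mathfrak{m}$, and the ellipticity estimate $|\nabla u|^2\le\varkappa|\nabla u|_A^2$. Multiplying by $e^{2\tau\phi}$, integrating over $Q$, and recalling $|\mathbf{D}_Au|^2=|\nabla u|_A^2+|\partial_tu|^2$, the term $2(n+2)\mathfrak{m}^2\int_Qe^{2\tau\phi}(\varkappa|\nabla u|_A^2+|\partial_tu|^2+|u|^2)\,dxdt$ can be absorbed into the left-hand side of the inequality obtained in the previous step: since $\psi\ge0$ one has $\phi\ge1$, so $\tau\lambda\phi\ge\tau\lambda$ and $\tau^3\lambda^4\phi^3\ge\tau^3\lambda^4$, and it suffices to enlarge $\lambda^\ast$ and $\tau^\ast$ so that $\aleph\tau\lambda\phi$ dominates a fixed multiple of $(n+2)\mathfrak{m}^2\varkappa$ (absorbing the gradient and $\partial_t$ contributions) and $\aleph\tau^3\lambda^4\phi^3$ dominates a fixed multiple of $(n+2)\mathfrak{m}^2$ (absorbing the zeroth-order contribution). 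This produces the asserted inequality with a new constant $\aleph=\aleph(\mathfrak{d})$ and new thresholds $\lambda^\ast(\mathfrak{d})$, $\tau^\ast(\mathfrak{d})$, the boundary term being exactly $\tau\lambda\int_{\Sigma_+}e^{2\tau\phi}\phi|\partial_\nu u|^2\,d\sigma dt$ inherited from Theorem \ref{CarlemanTheorem1.2}.

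Every ingredient here has already appeared in the excerpt, so I do not expect a genuine obstacle. The only points deserving a line of verification are that the homogeneous Cauchy conditions on $\Sigma$ and on $\Omega\times\{t_1,t_2\}$ descend to the real and imaginary parts — which is immediate — and that the absorption constants can be chosen depending only on $\mathfrak{d}$, which is clear from $\phi\ge1$ and the fact that $\varkappa,\mathfrak{m}$ are among the data in $\mathfrak{d}$.
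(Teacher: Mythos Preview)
Your proposal is correct and follows essentially the same approach as the paper, which simply remarks that ``the preceding arguments'' (i.e., the complexification step and the absorption of the first-order perturbation used to pass from Theorem~\ref{CarlemanTheorem1} to Corollary~\ref{CarlemanCorollary1}) apply verbatim to Theorem~\ref{CarlemanTheorem1.2} to yield Corollary~\ref{CarlemanCorollary2}. Your write-up makes explicit exactly these two steps, with the same pointwise bound and the same absorption mechanism.
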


We close this subsection by a one-parameter Carleman inequality that we obtain as a special case of Corollary \ref{CarlemanCorollary1} in which we fixed $\lambda \ge \lambda^\ast$.

\begin{theorem}\label{CarlemanTheoremWaveLocal}
There exist two constants $\aleph=\aleph(\mathfrak{d})>0$ and $\tau^\ast =\tau^\ast(\mathfrak{d})>0$ so that, for any $\tau \ge \tau^\ast$ and $u\in C_0^\infty(Q,\mathbb{C})$, we have
\[
\sum_{|\alpha|\le 1}\tau^{2(2-|\alpha|)}\int_Qe^{2\tau \phi}|\partial ^\alpha u|^2dxdt  \le \aleph \tau \int_Qe^{2\tau \phi}|\mathcal{L}_A^wu|^2dxdt,
\]
\end{theorem}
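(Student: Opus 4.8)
The plan is to read off Theorem \ref{CarlemanTheoremWaveLocal} from Corollary \ref{CarlemanCorollary1} by freezing $\lambda$ at the value $\lambda^\ast$ furnished there and then doing a little bookkeeping. First, since $u\in C_0^\infty(Q,\mathbb{C})$, the function $u$ and all of its derivatives vanish in a neighborhood of $\partial Q$; recalling that the measure $d\mu$ is carried by $\Sigma$ and by $\Omega\times\{t_1,t_2\}$, the boundary integral $\int_{\partial Q}e^{2\tau\phi}[\tau^3\lambda^3\phi^3|u|^2+\tau\lambda\phi|\mathbf{D}_Au|^2]d\mu$ in \eqref{C29} vanishes identically. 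Hence, choosing $\lambda=\lambda^\ast$ in Corollary \ref{CarlemanCorollary1}, we obtain, for all $\tau\ge\tau^\ast$,
\begin{equation}\label{wloc1}
\aleph\int_Qe^{2\tau\phi}\left[\tau^3(\lambda^\ast)^4\phi^3|u|^2+\tau\lambda^\ast\phi|\mathbf{D}_Au|^2\right]dxdt\le\int_Qe^{2\tau\phi}|\mathcal{L}_A^wu|^2dxdt ,
\end{equation}
with $\phi=e^{\lambda^\ast\psi}$ now a fixed function on $\overline{Q}$.

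The second step is to eliminate the weight $\phi$ from inside the brackets and the frozen parameter $\lambda^\ast$. Since $0\le\psi\in C(\overline{Q})$ and $\overline{Q}$ is compact, we have $1\le\phi\le e^{\lambda^\ast\|\psi\|_{C(\overline{Q})}}$, so both $\phi$ and $\phi^3$ are bounded above and below by positive constants depending only on $\mathfrak{d}$; likewise $\lambda^\ast=\lambda^\ast(\mathfrak{d})$ is a constant. Absorbing all of these into $\aleph$, \eqref{wloc1} becomes, after renaming the constant,
\begin{equation}\label{wloc2}
\int_Qe^{2\tau\phi}\left[\tau^3|u|^2+\tau|\mathbf{D}_Au|^2\right]dxdt\le\aleph\int_Qe^{2\tau\phi}|\mathcal{L}_A^wu|^2dxdt ,\qquad\tau\ge\tau^\ast .
\end{equation}

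Next I would multiply \eqref{wloc2} by $\tau$, which produces $\int_Qe^{2\tau\phi}[\tau^4|u|^2+\tau^2|\mathbf{D}_Au|^2]dxdt$ on the left and $\aleph\tau\int_Qe^{2\tau\phi}|\mathcal{L}_A^wu|^2dxdt$ on the right, which is exactly the extra factor of $\tau$ appearing in the statement. It remains to replace $|\mathbf{D}_Au|^2=|\nabla_Au|^2+|\partial_tu|^2$ by $\sum_{|\alpha|=1}|\partial^\alpha u|^2=|\nabla u|^2+|\partial_tu|^2$, which follows from the uniform ellipticity of $A$: since $|\nabla_Au|^2=(A\nabla u|\nabla u)\ge\varkappa^{-1}|\nabla u|^2$, we get $|\mathbf{D}_Au|^2\ge\min(1,\varkappa^{-1})\sum_{|\alpha|=1}|\partial^\alpha u|^2$. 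Combining this with the $\tau^4|u|^2$ term (which is the $|\alpha|=0$ contribution, since $\tau^{2(2-0)}=\tau^4$ and $\tau^{2(2-1)}=\tau^2$) and absorbing the factor $\min(1,\varkappa^{-1})$ into $\aleph$ gives precisely
\[
\sum_{|\alpha|\le1}\tau^{2(2-|\alpha|)}\int_Qe^{2\tau\phi}|\partial^\alpha u|^2dxdt\le\aleph\tau\int_Qe^{2\tau\phi}|\mathcal{L}_A^wu|^2dxdt ,
\]
which is the asserted inequality.

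There is no genuine obstacle here; the argument is pure bookkeeping, and the two points that deserve a moment's attention are both minor. The first is that passing to $C_0^\infty(Q)$ is exactly what makes \emph{every} boundary term in \eqref{C29} disappear — this is the only place the compact-support hypothesis is used, and it is what turns the conclusion into a clean interior estimate. The second is the accounting of powers of $\tau$: freezing $\lambda$ degrades the two-parameter gain $\tau^3\lambda^4$ on the zeroth-order term to a mere $\tau^3$, one power short of the $\tau^4$ demanded on the left, so one power of $\tau$ must be conceded on the right-hand side — which is precisely the $\tau$ written in the statement.
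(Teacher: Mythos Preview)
Your proof is correct and follows exactly the approach indicated in the paper, which simply states that the theorem is a special case of Corollary~\ref{CarlemanCorollary1} with $\lambda\ge\lambda^\ast$ fixed. Your bookkeeping---dropping the boundary terms for $u\in C_0^\infty(Q)$, absorbing the fixed $\phi$ and $\lambda^\ast$ into the constant, multiplying through by $\tau$, and converting $|\mathbf{D}_Au|^2$ to $\sum_{|\alpha|=1}|\partial^\alpha u|^2$ via ellipticity---fills in precisely the details the paper omits.
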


\subsection{Geometric form of the Carleman inequality}

Pick $A\in \mathscr{M}(\Omega ,\varkappa ,\mathfrak{m})$ and let $(a^{k\ell}(x))=(a_{k\ell}(x))^{-1}$, $x\in \overline{\Omega}$. Consider then on $\overline{\Omega}$ the Riemannian metric $g$ defined as follows
\[
g_{k\ell}(x)=|\mbox{det}(A)|^{1/(n-2)}a^{k\ell}(x),\quad x\in \overline{\Omega}.
\]
Set then
\[
(g^{k\ell}(x))=(g_{k\ell}(x))^{-1},\quad |g(x)|=|\mbox{det}((g_{k\ell}(x))|,\quad x\in \overline{\Omega}.
\]
As usual  define on $T_x\overline{\Omega}=\mathbb{R}^n$ the inner product
\[
(X|Y)_{g(x)}=\sum_{k,\ell =1}g_{k\ell}(x)X_kY_k,\quad X=\sum_{k=1}^nX_i\partial_i\in \mathbb{R}^n,\; Y=\sum_{k=1}^nY_i\partial_i\in \mathbb{R}^n,
\]
where $(\partial_1,\ldots ,\partial_n)$ is the dual basis of the Euclidean basis of $\mathbb{R}^n$. Set
\[
|X|_{g(x)}=(X|X)_{g(x)}^{1/2},\quad X=\sum_{k=1}^nX_i\partial_i\in \mathbb{R}^n.
\]
We use for notational convenience $(X|Y)_g$ and $|X|_g$ instead of $(X|Y)_{g(x)}$ and $|X|_{g(x)}$.

Recall that the gradient of $u\in H^1(\Omega )$ is  the vector field given by 
\[
\nabla_gu(x)=\sum_{k,\ell=1}^n g^{k\ell}(x)\partial_ku(x)\partial_\ell,\quad x\in \Omega,
\]
and the divergence of a vector field $X=\sum_{\ell=1}^nX_\ell\partial_k$ with $X_\ell \in H^1(\Omega)$, $1\le \ell \le n$, is defined as follows
\[
\mbox{div}_g(X)(x)=\frac{1}{\sqrt{|g(x)|}}\sum_{\ell=1}\partial_\ell (\sqrt{|g(x)}|X_\ell(x)),\quad x\in \Omega.
\]
The usual Laplace-Betrami operator associated to the metric $g$ is given,  for $u\in H^2(\Omega )$, by
\[
\Delta_gu(x)=\mbox{div}_g\nabla_gu(x)=\frac{1}{\sqrt{|g(x)|}}\sum_{k,\ell=1}\partial_\ell \left(\sqrt{|g(x)|}g^{k\ell}(x)\partial_ku(x)\right),\quad x\in \Omega.
\]
Straightforward computations show that $\Delta_Au=\sqrt{|g|}\Delta_gu$, from which we deduce the following identity
\begin{equation}\label{gf}
\Delta_gu =\Delta_{\sqrt{|g|}^{-1}A}u-2\left(\nabla\sqrt{|g|}^{-1}\Big|\nabla u\right)_A-u\Delta_A\sqrt{|g|}^{-1}.
\end{equation}
We assume in this subsection that  $\psi$ is a weight function for the wave operator $\Delta_{\sqrt{|g|}^{-1}A}-\partial_t^2$ with $\sqrt{|g|}^{-1}A$-pseudo convexity constant $\kappa>0$. Hereafter
\[
\mathcal{L}_g^wu=\Delta_gu-\partial_t^2u+(P|\nabla_g u)_g+q_1\partial_tu+q_0u,\quad u\in H^2(\Omega ),
\]
where $P=\sum_{\ell=1}^nP_i\partial_i$, $q_0$ and $q$ are so that $P_1,\ldots ,P_n, q_0,q_1$ belong to $L^\infty (Q;\mathbb{C})$ and satisfy
\[
\|P_\ell \|_{L^\infty(Q)}\le \mathfrak{m},\; 1\le \ell\le n,\quad \|q_0\|_{L^\infty(Q)}\le \mathfrak{m},\quad \|q_1\|_{L^\infty(Q)}\le \mathfrak{m}.
\]
As in the preceding section $\mathfrak{d}=(\Omega , t_1,t_2,\mathfrak{m}, \varkappa,\kappa ,\delta,\mathfrak{b})$ with $\mathfrak{b}\ge \|\psi\|_{C^4(\overline{Q})}$. In light of \eqref{gf} we deduce the following Carleman inequality from Corollary \ref{CarlemanCorollary1} in which
\[
|\mathbf{D}_gu|_g^2=|\nabla_gu|_g^2+|\partial_tu|^2,
\]
$dV_{\overline{\Omega}} =\sqrt{|g|}dx_1\ldots dx_n$ is the volume form and
\[
d\mu =\mathds{1}_{\Gamma \times (t_1,t_2)}dV_\Gamma dt +\mathds{1}_{\Omega \times \{t_1,t_2\}}dV_{\overline{\Omega}}\delta_t,
\]
where $dV_\Gamma$ is the volume form on $\Gamma$.

\begin{theorem}\label{CarlemanTheoremGeo1}
There exist three constants $\aleph=\aleph(\mathfrak{d})$, $\lambda^\ast=\lambda^\ast(\mathfrak{d})$ and $\tau^\ast =\tau^\ast(\mathfrak{d})$ so that, for any $\lambda \ge \lambda^\ast$, $\tau \ge \tau^\ast$ and $u\in H^2(Q,\mathbb{C})$, we have
\begin{align}
&\aleph\int_Qe^{2\tau \phi}\left[\tau^3\lambda ^4\phi^3 |u|^2+\tau \lambda  \phi |\mathbf{D}_g u|_g^2\right]dV_{\overline{\Omega}}dt \label{C1}
\\
&\hskip1cm \le \int_Qe^{2\tau \phi}\left|\mathcal{L}_g^wu\right|^2dxdt+\int_{\partial Q} e^{2\tau \phi}\left[\tau^3\lambda ^3\phi^3|u|^2+\tau \lambda \phi |\mathbf{D}_g u|_g^2\right]d\mu ,\nonumber
\end{align}
\end{theorem}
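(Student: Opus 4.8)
The plan is to derive Theorem \ref{CarlemanTheoremGeo1} as a direct translation of Corollary \ref{CarlemanCorollary1} applied to the Euclidean operator $\mathcal{L}^w_{\sqrt{|g|}^{-1}A}=\Delta_{\sqrt{|g|}^{-1}A}-\partial_t^2+(\text{lower order})$, using identity \eqref{gf} to convert between the Laplace--Beltrami operator $\Delta_g$ and the conformally rescaled divergence-form operator $\Delta_{\sqrt{|g|}^{-1}A}$. First I would observe that the matrix $B:=\sqrt{|g|}^{-1}A$ lies in $\mathscr{M}(\Omega,\varkappa',\mathfrak{m}')$ for constants $\varkappa',\mathfrak{m}'$ depending only on $n,\varkappa,\mathfrak{m}$: indeed $|g|$ is a smooth function of the entries of $A$, uniformly bounded above and below away from zero on $\overline{\Omega}$ thanks to the ellipticity bounds on $A$ and the $C^{2,1}$ bound $\mathfrak{m}$, and $\sqrt{|g|}^{-1}\in C^{2,1}(\overline{\Omega})$ with norm controlled by $n,\varkappa,\mathfrak{m}$. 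Hence all the constants produced below depend only on the data $\mathfrak{d}$.

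Next I would rewrite $\mathcal{L}_g^wu$ in divergence form. By \eqref{gf},
\[
\Delta_gu=\Delta_Bu-2\left(\nabla\sqrt{|g|}^{-1}\,\Big|\,\nabla u\right)_A-u\,\Delta_A\sqrt{|g|}^{-1},
\]
and since $\nabla_gu=\sum g^{k\ell}\partial_ku\,\partial_\ell$ the term $(P|\nabla_gu)_g=\sum_\ell P_\ell\partial_\ell u$ is a genuine first-order operator with $L^\infty$ coefficients bounded by $\mathfrak{m}$; similarly $q_1\partial_tu+q_0u$ is already of the admissible form. Collecting everything, $\mathcal{L}_g^wu=\mathcal{L}^w_Bu$ where $\mathcal{L}^w_B=\Delta_B-\partial_t^2+\widetilde q_0\partial_t+\sum_j\widetilde q_j\partial_j+\widetilde p$ with new lower-order coefficients $\widetilde q_0=q_1$, $\widetilde q_j=P_j-2\sum_k a_{jk}\partial_k\sqrt{|g|}^{-1}$ (up to the $\sqrt{|g|}$ factor relating $\Delta_A$ and $\Delta_g$, which I absorb), $\widetilde p=q_0-\Delta_A\sqrt{|g|}^{-1}$, all bounded in $L^\infty(Q)$ by a constant depending only on $\mathfrak{d}$. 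The hypothesis that $\psi$ is a weight function for $\Delta_B-\partial_t^2$ with $B$-pseudo-convexity constant $\kappa>0$ is exactly what is needed to invoke Corollary \ref{CarlemanCorollary1} for $\mathcal{L}^w_B$.

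Applying Corollary \ref{CarlemanCorollary1} to $\mathcal{L}^w_B$ gives \eqref{C29} with $|\mathbf{D}_Bu|^2=|\nabla_Bu|^2+|\partial_tu|^2$ and the Euclidean measures $dxdt$, $d\mu$. It remains to convert these quantities into their Riemannian counterparts. This is the routine but slightly fiddly part: one has the pointwise equivalences $|\nabla_gu|_g^2=\sum g^{k\ell}\partial_ku\partial_\ell u$ and $|\nabla_Bu|^2=\sum b_{k\ell}\partial_ku\partial_\ell u$, and since $g_{k\ell}=|g|^{1/2}\cdot|g|^{-1/2}\cdots$—more precisely $g^{k\ell}=\sqrt{|g|}^{-1}a_{k\ell}\cdot(\text{normalization})$ so that $\sqrt{|g|}\,|\nabla_gu|_g^2$ and $|\nabla_Bu|^2$ differ by a bounded factor, and likewise $dV_{\overline\Omega}=\sqrt{|g|}\,dx$, $dV_\Gamma$, $dV_{\overline\Omega}\delta_t$ differ from $d\sigma dt$, $dx\,\delta_t$ by bounded factors. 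Substituting these equivalences into \eqref{C29} and adjusting the generic constant $\aleph$ (shrinking it, and enlarging $\lambda^\ast,\tau^\ast$ if needed) yields \eqref{C1}. I do not anticipate a genuine obstacle here; the only point requiring care is bookkeeping the conformal factor $\sqrt{|g|}$ consistently across the volume element, the gradient norm, and the boundary measure, so that the two-sided bounds all go through with constants depending only on $\mathfrak{d}$.
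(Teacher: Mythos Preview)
Your proposal is correct and follows exactly the approach the paper intends: the paper states that the theorem is deduced ``from Corollary \ref{CarlemanCorollary1}'' via identity \eqref{gf}, and you carry out precisely that reduction, rewriting $\mathcal{L}_g^w$ as an operator $\mathcal{L}_B^w$ with principal part $\Delta_{\sqrt{|g|}^{-1}A}-\partial_t^2$ plus admissible lower-order terms, then translating the resulting Euclidean inequality into Riemannian notation. The only cosmetic point is that the conversions are in fact exact rather than merely ``up to bounded factors'': with the paper's normalization one has $\sqrt{|g|}\,g^{k\ell}=a_{k\ell}$, hence $|\nabla_g u|_g^2=|\nabla_B u|^2$ pointwise and $dV_{\overline{\Omega}}=\sqrt{|g|}\,dx$, so the bookkeeping you flag as ``fiddly'' is cleaner than you suggest.
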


\subsection{Unique continuation}

We assume, for simplicity, in the present subsection that $t_1=-\mathfrak{t}$ and $t_2=\mathfrak{t}$ with $\mathfrak{t}>0$.

We start with unique continuation across a particular convex hypersurface. To this end, we set
\begin{align*}
E_+(\tilde{x},c)=\{x=(x',x_n)\in \mathbb{R}^n ;\; 0\le x_n-&\tilde{x}_n<c
\\
&\mbox{and}\; x_n-\tilde{x}_n\ge |x'-\tilde{x}'|^2/c\},
\end{align*}
where $\tilde{x}=(\tilde{x}',\tilde{x}_n)\in \mathbb{R}^n$ and $c>0$.

\begin{theorem}\label{theoremUCw1}
Suppose, for some $r>0$, that $B(\tilde{x},r)\Subset \Omega$. We find $0<c^\ast=c^\ast (\varkappa ,\mathfrak{m})$ with the property that, for any $0<c<c^\ast$, there exist $\tilde{\mathfrak{t}}=\tilde{\mathfrak{t}}(c,\varkappa)$ and $0<\rho=\rho(c,\varkappa)<r$  so that, if $\mathfrak{t}\ge\tilde{\mathfrak{t}}$ then there exists  $0<\mathfrak{t}_0=\mathfrak{t}_0(\varkappa ,\mathfrak{m},\mathfrak{t})\le \mathfrak{t}$ satisfying:  for any $u\in H^2(\mathcal{Q};\mathbb{C})$, with $\mathcal{Q}=B(\tilde{x},r)\times(-\mathfrak{t},\mathfrak{t})$, such that $\mathcal{L}_A^wu=$ in $\mathcal{Q}$ and $\mbox{supp}(u(\cdot ,t))\cap  B(\tilde{x},r)\subset E_+(\tilde{x},c)$, $t\in (-\mathfrak{t},\mathfrak{t})$, we have $u=0$ in $B(\tilde{x},\rho)\times (-\mathfrak{t}_0,\mathfrak{t}_0)$.
\end{theorem}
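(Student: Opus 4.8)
\emph{Proof plan.} This is the classical unique continuation across a convex (here paraboloidal) hypersurface, carried out by means of the Carleman inequality of Corollary~\ref{CarlemanCorollary1}. Writing $\mathcal L_A^w u=0$, the hypothesis states that $u$ vanishes, inside $B(\tilde x,r)$, below the paraboloid $\Pi=\{x_n-\tilde x_n=|x'-\tilde x'|^2/c\}$ and above the cap $\{x_n-\tilde x_n=c\}$, and the goal is to propagate the vanishing across $\Pi$ near its vertex $\tilde x$. I would first reduce to the case where the principal coefficient is close to the identity near the vertex: after translating $\tilde x=0$ and applying the linear change of variables $x=A(0)^{1/2}y$ (under which $\Pi$ becomes a convex quadric through the origin and the lower order coefficients stay $L^\infty$-bounded in terms of $\varkappa,\mathfrak m$), the parabolic dilation $(y,t)\mapsto(w,s)=(y/c,t/c)$ preserves the wave structure, replaces $A$ by $A_c(w):=A(cw)$ --- which converges to $\mathbf I$ in $C^{2,1}$ on every fixed bounded set as $c\to0$, since $\|A\|_{C^{2,1}}\le\mathfrak m$ --- multiplies the lower order coefficients by $c$, carries the support set onto a fixed bounded convex set $\widehat E$ with vertex at the origin, and dilates the time interval to $(-\mathfrak t/c,\mathfrak t/c)$. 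Choosing $c<c^\ast(\varkappa,\mathfrak m)$ small enough that $A_c$ lies in the neighbourhoods $\mathcal N,\mathcal N_0$ of $\mathbf I$ of Lemma~\ref{lemmaPCH} defines $c^\ast$.

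Next I would build the Carleman weight. Let $H$ be the convex, non-characteristic lower boundary hypersurface of $\widehat E$ through the origin; by Lemma~\ref{lemmaPCH} it is $A_c$-pseudo-convex, so Definition~\ref{definitionPCH1} and the construction of Subsection~\ref{subsectionPCH} provide a spatial weight $\psi_0$ with $\nabla\psi_0\ne0$, which is $A_c$-pseudo-convex with a constant $\kappa=\kappa(\varkappa)>0$ and whose level set through the origin is tangent to $H$ there and locally (off the origin) on the $\{u=0\}$ side of $H$ --- the effect of the convexifying term in $\varphi_H$. Take $\psi(w,s)=C+\psi_0(w)-\theta s^2$, with $C$ large so that $\psi\ge0$ and $0<2\theta\le\varkappa^{-1}\kappa/4$ so that \eqref{int2} holds, on a small sub-cylinder $\Omega_0\times(-T,T)$ of the rescaled domain: $\Omega_0$ small and $T$ small relative to $\varkappa/\kappa$ and to the lower bound of $|\nabla\psi_0|_{A_c}$ on $\Omega_0$ so that \eqref{int1} holds, but $T$ also large enough that the space-time set $\{\psi>m\}$ --- for the contradiction threshold $m$ fixed below --- is relatively compact in $\Omega_0\times(-T,T)$; these requirements are compatible once $\Omega_0$ is small, the only constraint with the data being $(-T,T)\subset(-\mathfrak t/c,\mathfrak t/c)$, i.e.\ $\mathfrak t\ge\tilde{\mathfrak t}(c,\varkappa):=cT$, and the size of $\{\psi>m\}$ then fixes $\rho=\rho(c,\varkappa)$ and $\mathfrak t_0=\mathfrak t_0(\varkappa,\mathfrak m,\mathfrak t)\le\mathfrak t$. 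By Definition~\ref{definitionCW}(d), $\psi$ is a weight function for $\mathcal L^w_{A_c,0}$ on $\Omega_0\times(-T,T)$, so Corollary~\ref{CarlemanCorollary1} applies to $\mathcal L^w_{A_c}$ there.

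Then I would run the usual cutoff argument. Choose a ball $B\Subset\Omega_0$ about the origin, a small $\eta>0$, and $\zeta=\zeta(w,s)\in C_0^\infty(B\times(-T,T))$ equal to $1$ on $\{\psi_0>\psi_0(0)-\eta\}\cap B$ and near $s=0$ and vanishing where $\psi_0<\psi_0(0)-2\eta$ or $|s|$ is close to $T$; for $\eta$ small compared with the radius of $B$, the set $\mathrm{supp}\,\nabla\zeta$ splits into a part on which $\psi<\psi(0,0)-\tfrac{\eta}{2}$ (the interior transitions of $\zeta$) and a part on $\partial B$ which, by the tangency and the local position of the level set relative to $H$, lies below $H$, hence inside $\{u=0\}$ by the support hypothesis. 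Apply \eqref{C29} to $v=\zeta u\in H^2$: its boundary terms vanish, and $\mathcal L^w_{A_c}u=0$ gives $\mathcal L^w_{A_c}v=[\mathcal L^w_{A_c},\zeta]u$, supported on $\mathrm{supp}\,\nabla\zeta$ and of order $\le1$ in $u$. Retaining only the $|v|^2$-term on the left and fixing a small box $\tilde B$ near the origin on which $\zeta\equiv1$ and $\psi>m:=\psi(0,0)-\tfrac{\eta}{4}$, so that $\phi_2:=\min_{\tilde B}\phi>\phi_1:=e^{\lambda(\psi(0,0)-\eta/2)}\ge\phi$ on the effective part of $\mathrm{supp}\,\nabla\zeta$, we get
\[
\aleph\,\tau^3\lambda^4 e^{2\tau\phi_2}\int_{\tilde B}|u|^2\ \le\ C\,e^{2\tau\phi_1}\int_{\mathrm{supp}\,\nabla\zeta}\bigl(|u|^2+|\nabla u|^2+|\partial_s u|^2\bigr),
\]
whence $\int_{\tilde B}|u|^2\le C\tau^{-3}\lambda^{-4}e^{-2\tau(\phi_2-\phi_1)}\|u\|_{H^1}^2\to0$ as $\tau\to\infty$, so $u=0$ on $\tilde B$. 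In fact this single application already yields $u=0$ on $\{\psi_0\ge\psi_0(0)\}$ times a small time interval near the origin, and combined with $u=0$ below $H$ (inside which $\{\psi_0<\psi_0(0)\}$ near the origin is contained) this covers a full neighbourhood of the origin of the announced size; undoing the dilation and the linear change of variables delivers $u=0$ on $B(\tilde x,\rho)\times(-\mathfrak t_0,\mathfrak t_0)$.

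\textbf{Main obstacle.} The crux is the second step: choosing $\psi_0$ so that its level set through the vertex bends across $\Pi$ on the correct side, so that the part of $\mathrm{supp}\,\nabla\zeta$ not controlled by the weight lies in $\{u=0\}$, while ensuring that $\psi=C+\psi_0-\theta s^2$ satisfies the pseudo-convexity condition together with \eqref{int1}--\eqref{int2} on a cylinder fitting inside $\mathcal Q$, and extracting from all this the quantities $c^\ast,\tilde{\mathfrak t},\rho,\mathfrak t_0$ with the asserted dependences. The limited regularity ($A\in C^{2,1}$, $u\in H^2$, $L^\infty$ lower order coefficients) is harmless here, Corollary~\ref{CarlemanCorollary1} being already stated at that level and $\zeta u$ lying in $H^2(Q;\mathbb C)$.
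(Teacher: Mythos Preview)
Your overall strategy—Carleman inequality plus a cutoff localised near the vertex of the paraboloid—is the right one and is what the paper does. But your route to the weight $\psi_0$ is much more circuitous than necessary, and a couple of the intermediate claims are not quite right.

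\textbf{The paper's direct approach.} The paper skips the linear change of variables, the dilation, and Lemma~\ref{lemmaPCH} entirely. It simply sets $x_0=\tilde x+c\,\mathfrak e_n$ and $\psi_0(x)=|x-x_0|^2/2$, then checks $A$-pseudo-convexity of $\psi_0$ by hand on $B(\tilde x,r_0)$ with $r_0\le c/2$: since $\nabla^2\psi_0=\mathbf I$ the term $2A\nabla^2\psi_0 A=2A^2$ contributes at least $2\varkappa^{-2}|\xi|^2$, while $\Upsilon_A(\psi_0)$ is linear in $\nabla\psi_0=x-x_0$, hence $O(\mathfrak m c)$ in norm. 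This yields $c^\ast(\varkappa,\mathfrak m)$ in one line. The crucial geometric fact is then $E_+(\tilde x,c)\cap B(\tilde x,r_0)\setminus\{\tilde x\}\subset\{\psi_0<\psi_0(\tilde x)\}$, which follows because the sphere $\{|x-x_0|=c\}$ lies strictly below the paraboloid away from the vertex. The cutoff is a product $\chi(x)\vartheta(t)$ with $\chi$ a \emph{radial} cutoff (equal to $1$ on $B(\tilde x,\rho_1)$, supported in $B(\tilde x,r_0)$) and $\vartheta$ equal to $1$ on $[-\mathfrak t/2,\mathfrak t/2]$; the commutator then splits as $f_1$ (spatial, living where $\psi_0<\psi_0(\tilde x)-\epsilon$) and $f_2$ (temporal, living where $-\gamma t^2/2$ is very negative), and both are beaten by the weight on the inner region once $\gamma$ and $\mathfrak t$ are tuned.

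\textbf{Issues in your reduction.} The linear change $x=A(0)^{1/2}y$ is superfluous and your claim that, after it and the isotropic dilation, the support is carried onto a \emph{fixed} set $\widehat E$ is false: the image quadric still depends on $A(0)$. If you drop the linear change and keep only the dilation $w=x/c$, $s=t/c$, then $E_+(\tilde x,c)$ does go to the fixed $E_+(0,1)$, but $A_c\to A(0)$ rather than $\mathbf I$, so Lemma~\ref{lemmaPCH} (stated for neighbourhoods of $\mathbf I$) does not apply directly. Either way you are essentially rederiving the explicit weight $\tilde\psi_0(y)=(y_n-1)^2+|y'|^2$, which after undoing the dilation is exactly the paper's $|x-x_0|^2$ up to a constant factor; and since for the paraboloid $H=\{y_n=|y'|^2\}$ the convexifying map $\varphi_H$ of Subsection~\ref{subsectionPCH} is the identity, invoking that machinery buys nothing.

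\textbf{Your cutoff.} The claim that the part of $\mathrm{supp}\,\nabla\zeta$ near $\partial B$ ``lies below $H$, hence inside $\{u=0\}$'' is incorrect: $\partial B$ meets $E_+$ above $H$. What is true (and what the paper uses) is that on $E_+$ away from the vertex, $\psi_0$ is strictly smaller than $\psi_0(\tilde x)$, so that portion of the commutator is controlled by the weight, not by the support hypothesis. A cutoff adapted to level sets of $\psi_0$ can be made to work, but the simplest choice is the paper's radial $\chi$.

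In short: your plan is sound and would lead to a correct proof after repair, but the paper's explicit choice $\psi_0=|x-x_0|^2/2$ removes precisely the ``main obstacle'' you identify and makes the dependencies $c^\ast,\tilde{\mathfrak t},\rho,\mathfrak t_0$ transparent.
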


\begin{proof}
Let $\tilde{x}\in \Omega$ and $\mathfrak{e}_n=(0,1)\in \mathbb{R}^{n-1}\times \mathbb{R}$. Set $x_0=\tilde{x}+c\mathfrak{e}_n$ and let $0<r_0\le\min(r,c/2)$. Define
\[
\psi_0(x)=\psi_0(x',x_n)=|x-x_0|^2/2.
\]
As 
\[
|x-x_0|\le |x-\tilde{x}|+|\tilde{x}-x_0|\le r_0+c\le 3c/2,\quad x\in B(\tilde{x},r_0),
\]
we find in a straightforward manner that, for some constant $\aleph =\aleph (\mathfrak{m})$, we have
\[
(\Theta(\psi_0)(x)\xi |\xi )\ge 2\varkappa ^2-\aleph c,\quad x\in B(\tilde{x},r_0),\; \xi \in \mathbb{R}^n,
\]
We fix $0<c<c^\ast=\varkappa^2/\aleph$. With this choice of $c$ we get
\[
(\Theta(\psi_0)(x)\xi |\xi )\ge \varkappa ^2|\xi|^2,\quad x\in B(\tilde{x},r_0),\; \xi \in \mathbb{R}^n.
\]
It is then not difficult to check that, where $E_+=E_+(\tilde{x},c)$,
\[
(E_+\cap B(\tilde{x},r_0)) \setminus\{\tilde{x}\}\subset \left\{ x\in B(\tilde{x},r_0)\setminus\{\tilde{x}\};\; \psi_0 (x)<\psi_0(\tilde{x})=c^2/2\right\}.
\]
Pick $\chi \in C_0^\infty (B(\tilde{x},r_0))$ satisfying $\chi =1$ in $B(\tilde{x},\rho_1)$, for some fixed $ 0<\rho_1<r_0$. Fix then $\epsilon >0$ in such a way that
\[
E_+\cap [B(\tilde{x},r)\setminus \overline{B}(\tilde{x},\rho_1 )]\subset \left\{ x\in B(\tilde{x},r);\; \psi_0 (x)<\psi_0 (\tilde{x})-\epsilon \right\}.
\]
Also, choose $0<\rho_0<\rho_1$ such that
\[
E_+\cap B(\tilde{x},\rho_0)\subset \left\{ x\in B(\tilde{x},r);\; \psi_0 (x)>\psi_0 (\tilde{x})-\epsilon/2 \right\}.
\]
We are going to apply Corollary \ref{CarlemanCorollary1} with $Q$ substituted by $\mathcal{Q}=B(\tilde{x},r)\times (-\mathfrak{t},\mathfrak{t})$. Let
\[
\psi (x,t)=\psi_0(x)-\gamma t^2/2 +C,
\]
where the constant $C >0$ is chosen sufficiently large in order to guarantee that $\psi \ge 0$. From Example \ref{example1} we can easily see that $\phi=e^{\lambda \psi}$ is a weight function for $\mathcal{L}_{A,0}^w$ in $\mathcal{Q}$ provided that $0<\gamma <\min (c/(2\mathfrak{t}),\varkappa/4)$.

Suppose that $\mathfrak{t}$ is chosen sufficiently large in such a way $c/(2\mathfrak{t})<\varkappa/4$. In that case we can take $\gamma =c/(4\mathfrak{t})$.

Fix $\gamma$ as above and let $0<\varrho\le1$ to be determined later. Let $\lambda ^\ast$ and $\tau^\ast$ be as in Corollary \ref{CarlemanCorollary1}. In the sequel we fix $\lambda \ge \lambda ^\ast$. Set
\begin{align*}
&\mathbf{Q}_0=[B(\tilde{x},\rho_0)\cap E_+]\times (-\varrho \mathfrak{t},\varrho \mathfrak{t}),
\\
&\mathbf{Q}_1= \left\{E_+\cap\left[B(\tilde{x},r_0)\setminus \overline{B}(\tilde{x},\rho_1)\right]\right\}\times (-\mathfrak{t},\mathfrak{t}),
\\
&\mathbf{Q}_2=[B(\tilde{x},r_0)\cap E_+]\times [(-\mathfrak{t},- \mathfrak{t}/2)\cup (\mathfrak{t}/2,\mathfrak{t})].
\end{align*}
Then straightforward computations show
\begin{align*}
&e^{\lambda \psi}\ge c_0=e^{\lambda (c^2/2-\epsilon/2-\gamma \varrho^2 \mathfrak{t}^2/2+\delta)}\quad \mbox{in}\; \mathbf{Q}_0,
\\
&e^{\lambda \psi}\le c_1=e^{\lambda (c^2/2-\epsilon+\delta )}\quad \mbox{in}\; \mathbf{Q}_1,
\\
&e^{\lambda \psi}\le c_2=e^{\lambda (c^2/2-\gamma \mathfrak{t}^2/8+\delta )}\quad \mbox{in}\; \mathbf{Q}_2.
\end{align*}
In these inequalities we substitute $\gamma \mathfrak{t}$ by $c/4$ in order to get
\begin{align*}
&e^{\lambda \psi}\ge c_0=e^{\lambda (c^2/2-\epsilon/2- c \varrho^2 \mathfrak{t}/8+\delta)}\quad \mbox{in}\; \mathbf{Q}_0,
\\
&e^{\lambda \psi}\le c_1=e^{\lambda (c^2/2-\epsilon+\delta )}\quad \mbox{in}\; \mathbf{Q}_1,
\\
&e^{\lambda \psi}\le c_2=e^{\lambda (c^2/2-c \mathfrak{t}/32+\delta )}\quad \mbox{in}\; \mathbf{Q}_2.
\end{align*}

If $\mathfrak{t}\ge 2\epsilon/c$ we choose $\varrho$ so that $c\varrho^2\mathfrak{t}=2\epsilon$. In that case we have
\begin{align*}
&e^{\lambda \psi}\ge c_0=e^{\lambda (c^2/2-3\epsilon/4+\delta)}\quad \mbox{in}\; \mathbf{Q}_0,
\\
&e^{\lambda \psi}\le c_1=e^{\lambda (c^2/2-\epsilon+\delta )}\quad \mbox{in}\; \mathbf{Q}_1,
\\
&e^{\lambda \psi}\le c_2=e^{\lambda (c^2/2-c \mathfrak{t}/32+\delta )}\quad \mbox{in}\; \mathbf{Q}_2.
\end{align*}
In consequence $c_1<c_0$. Furthermore, if $\mathfrak{t}> 24\epsilon/c$ then we have also $c_2<c_0$.

Let $u\in H^2(\mathcal{Q};\mathbb{C})$ satisfying $\mathcal{L}_A^wu=0$ in $\mathcal{Q}$ and $\mbox{supp}(u(\cdot ,t))\cap  B(\tilde{x},r)\subset E_+$, $t\in (-\mathfrak{t},\mathfrak{t})$. Define $v=\chi (x)\vartheta (t)u$ with $\vartheta \in C_0^\infty ((-\mathfrak{t},\mathfrak{t}))$ satisfying $\vartheta =1$ in $[- \mathfrak{t}/2,\mathfrak{t}/2]$.

As $\mathcal{L}_A^wu=0$ in $\mathcal{Q}$, elementary computations show that $\mathcal{L}_A^wv=f_1+f_2$ in $\mathcal{Q}$, where
\begin{align*}
&f_1=2\vartheta (\nabla \chi |\nabla u)_A+\vartheta u\mbox{div}(A\nabla\chi) +\vartheta u\sum_{j=1}^n q_j\partial_j\chi ,
\\
&f_2= 2\chi \partial_tu\partial_t\vartheta +\chi u\partial_t^2\vartheta +q_0\chi u\partial_t\vartheta .
\end{align*}
Taking into account that $\mbox{supp}(f_1)\subset \mathbf{Q}_1$ and $\mbox{supp}(f_2)\subset \mathbf{Q}_2$, we find by applying Corollary \ref{CarlemanCorollary1}, where $\tau \ge \tau^\ast$, 
\begin{align}
&\int_{B(\tilde{x},\rho_0)\times (-\varrho \mathfrak{t},\varrho \mathfrak{t})}|u|^2dxdt \label{ucw1}
\\
&\hskip 1cm =\int_{\mathbf{Q}_0}|v|^2dxdt \nonumber
\\
&\hskip 1cm \le \aleph \tau^{-3}\left[ e^{-\tau (c_0 -c_1)}\int_{\mathbf{Q}_1}|f_1|^2dxdt+e^{-\tau (c_0 -c_2)}\int_{\mathbf{Q}_2}|f_2|^2dxdt\right] .\nonumber
\end{align}
Passing then to the limit, as $\tau$ tends to $\infty$, in \eqref{ucw1} in order to obtain that $u=0$ in $B(\tilde{x},\rho_0)\times (-\varrho \mathfrak{t},\varrho \mathfrak{t})$. 
\end{proof}

\begin{definition}\label{gpucw}
We will say that $\mathcal{L}_A^w$ has the weak unique continuation property in $Q$ if we can find $0<\tau \le \mathfrak{t}$ so that, for any open subset $\mathcal{O}$  with $\overline{\mathcal{O}}\subsetneqq \Omega$, there exists an open subset $\mathcal{O}_0$ with $\mathcal{O}_0\supsetneqq \overline{\mathcal{O}}$ so that, for any $u\in H^2(Q;\mathbb{C})$ satisfying $\mathcal{L}_A^wu=0$ in $Q$ and $u=0$  in $\mathcal{O}\times (-\mathfrak{t},\mathfrak{t})$, we have $u=0$ in $\mathcal{O}_0\times (-\tau,\tau)$.
\end{definition}

\begin{theorem}\label{theoremUCw2}
There exist a universal constant $\mathfrak{t}^\ast>0$ and a neighborhood $\mathcal{N}$ of $\mathbf{I}$ in $C^{2,1}(\overline{\Omega};\mathbb{R}^n\times\mathbb{R}^n)$ so that, for each $\mathfrak{t}\ge \mathfrak{t}^\ast$ and $A\in \mathcal{N}$, $\mathcal{L}_A^w$ has the weak unique continuation property in $Q$.
\end{theorem}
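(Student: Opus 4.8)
The plan is to derive the statement from the local result Theorem~\ref{theoremUCw1} by iterating it finitely many times, keeping track of how much the time interval shrinks at each step.

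\emph{Normalization.} First I would fix $\varkappa>1$ and $\mathfrak{m}\ge 1$ so that the $C^{2,1}$-ball of some radius $\eta_0>0$ about $\mathbf{I}$ is contained in $\mathscr{M}(\Omega,\varkappa,\mathfrak{m})$, and take $\mathcal{N}$ to be that ball; shrinking $\eta_0$ if necessary, the continuity argument of Example~\ref{example1}(ii) together with Lemma~\ref{lemmaPCH} ensures that the quadratic weights $x\mapsto|x-x_0|^2/2$ used below are $A$-pseudo-convex on the relevant balls, with constant bounded below uniformly over $A\in\mathcal{N}$. Once $\varkappa,\mathfrak{m}$ are frozen, the constants $c^\ast$, $\tilde{\mathfrak{t}}$, $\rho$ and the function $T\mapsto\mathfrak{t}_0(\varkappa,\mathfrak{m},T)$ supplied by Theorem~\ref{theoremUCw1} depend only on $\varkappa$, $\mathfrak{m}$ and on the radius $r$ of the ambient ball; I would fix once and for all some $c\in(0,c^\ast)$ and some $r=r(\Omega)>0$ small enough that every ball of radius $r$ centered in $\{x\in\Omega:\operatorname{dist}(x,\partial\Omega)\ge r\}$ is compactly contained in $\Omega$. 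Inspecting the proof of Theorem~\ref{theoremUCw1} one sees that $\mathfrak{t}_0(\varkappa,\mathfrak{m},T)=\sqrt{2\epsilon T/c}$ with $\epsilon$ confined to a fixed interval; hence $T\mapsto\mathfrak{t}_0(\varkappa,\mathfrak{m},T)$ is increasing with a single attracting fixed point, the iterates $T_0=\mathfrak{t}$, $T_{k+1}=\mathfrak{t}_0(\varkappa,\mathfrak{m},T_k)$ are monotone, and therefore for every integer $N$ there is a threshold $\mathfrak{t}^\ast_N$ such that $\mathfrak{t}\ge\mathfrak{t}^\ast_N$ forces $T_k\ge\tilde{\mathfrak{t}}$ for all $k\le N$.

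\emph{Propagating the vanishing set.} Now fix $\mathfrak{t}\ge\mathfrak{t}^\ast$ (chosen below), $A\in\mathcal{N}$, an open $\mathcal{O}$ with $\overline{\mathcal{O}}\subsetneqq\Omega$, and $u\in H^2(Q;\mathbb{C})$ with $\mathcal{L}^w_Au=0$ in $Q$ and $u=0$ in $\mathcal{O}\times(-\mathfrak{t},\mathfrak{t})$. I would run the classical argument for unique continuation across pseudo-convex hypersurfaces: the proof of Theorem~\ref{theoremUCw1} records that each paraboloidal region $E_+(\tilde{x},c)$ lies inside a sublevel set of a quadratic weight $|x-x_0|^2/2$, so that whenever the current vanishing set contains, inside some $B(\tilde{x},r)\Subset\Omega$, the entire complement $B(\tilde{x},r)\setminus E_+(\tilde{x},c)$ for an appropriate position and axis of $E_+$, then---after multiplying $u$ by a cutoff exactly as in that proof and applying Theorem~\ref{theoremUCw1} with the current time radius $T$ in place of $\mathfrak{t}$---one gets that $u$ vanishes on $B(\tilde{x},\rho)\times(-\mathfrak{t}_0(\varkappa,\mathfrak{m},T),\mathfrak{t}_0(\varkappa,\mathfrak{m},T))$, a ball protruding beyond the old vanishing set across $\partial E_+$. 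Sweeping $\tilde{x}$ over a compact family of vertices (equivalently, sliding the pseudo-convex hypersurface across a region) enlarges the vanishing set in finitely many such steps; since $\overline{\mathcal{O}}$ is compact and $\Omega$ bounded, a number of steps $N\le N_\ast=N_\ast(\Omega,\varkappa,\mathfrak{m})$ will suffice to reach an open set $\mathcal{O}_0\supsetneqq\overline{\mathcal{O}}$. I would then set $\mathfrak{t}^\ast:=\mathfrak{t}^\ast_{N_\ast}$, which is the universal constant of the statement, and, for $\mathfrak{t}\ge\mathfrak{t}^\ast$, put $\tau:=T_{N_\ast}$: it is positive, does not exceed $\mathfrak{t}$, and depends only on $\mathfrak{t}$ and $A$, not on $\mathcal{O}$; since $T_k\ge\tau$ along the whole chain, $u=0$ in $\mathcal{O}_0\times(-\tau,\tau)$, as required.

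\emph{The main obstacle.} The hard part is the geometric bookkeeping in the middle step: one must produce, at each stage, a paraboloid $E_+(\tilde{x},c)$ whose complement inside a ball $B(\tilde{x},r)\Subset\Omega$ is contained in the current vanishing set while its tip ball $B(\tilde{x},\rho)$ sticks out of it, and must organise these choices so that finitely many of them push the vanishing set past all of $\overline{\mathcal{O}}$ without the balls ever leaving $\Omega$. This is precisely the place where the hypothesis $A\in\mathcal{N}$ (with $\mathcal{N}$ a small neighbourhood of $\mathbf{I}$) is essential: it makes a large family of hypersurfaces pseudo-convex and hence leaves enough room to slide them. It is the standard mechanism behind unique continuation across pseudo-convex hypersurfaces, cf.\ \cite{Ho2009Springer,Is2017Springer}, but carrying it out for an arbitrary open $\mathcal{O}$ is the delicate part of the argument.
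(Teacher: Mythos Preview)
Your plan is more elaborate than what the statement requires and it misses the paper's key simplification. Definition~\ref{gpucw} asks only for \emph{one} enlargement of the vanishing set, with a $\tau$ uniform in $\mathcal{O}$; no iteration is needed. The paper's proof is a single step: pick $z\in\partial\mathcal{O}\cap\Omega$, touch $z$ from inside by a ball $B(y_0,d)\subset\mathcal{O}$, translate $z$ to $0$ and rotate so that $B(y_0,d)\subset\{x_n<0\}$ (this keeps $A$ in the good neighborhood by Lemma~\ref{lemmaPCH}(b)). Near $0$ the support of $u(\cdot,t)$ now lies in the \emph{half-space} $\{x_n\ge 0\}$. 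One then applies the fixed diffeomorphism $\varphi$ of Subsection~\ref{subsectionPCH} with $\vartheta\equiv 0$, bending $\{x_n=0\}$ into $\{y_n=|y'|^2\}$; Lemma~\ref{lemmaPCH}(a) guarantees that the transformed matrix admits the weight $\tilde\psi_0(y)=(y_n-1)^2+|y'|^2$ as pseudo-convex. A single pass through the argument of Theorem~\ref{theoremUCw1}, with the \emph{fixed} data $\tilde x=0$, $c=1$, $\varkappa=1/4$, then produces a universal $\mathfrak t^\ast$ and, for $\mathfrak t\ge\mathfrak t^\ast$, a $\tau$ depending only on $\mathfrak t$; setting $\mathcal{O}_0=\mathcal{O}\cup\mathcal{U}$ finishes.

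Two concrete problems with your route. First, you never invoke the half-space reduction and the straightening map $\varphi$; you attempt to fit paraboloids $E_+(\tilde x,c)$ directly against $\partial\mathcal{O}$, which is exactly the geometric ``hard part'' you flag and which the paper's device sidesteps entirely --- this is the missing idea. Second, your claim that the number of iterations $N_\ast$ can be bounded by a constant depending only on $\Omega,\varkappa,\mathfrak{m}$ is unjustified: if the local applications are performed sequentially (so that the time radius shrinks along your recursion $T_{k+1}=\sqrt{2\epsilon T_k/c}$), then the number of $\rho$-balls needed to cover $\partial\mathcal{O}\cap\Omega$ visibly depends on $\mathcal{O}$; if instead they are performed in parallel with the same $T$, there is only one step and the recursion is vacuous. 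Either way the iteration scheme is superfluous, and in the sequential reading it is a genuine gap.
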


\begin{proof}
Set 
\[
H=\left\{ (x',x_n)\in \mathbb{R}^{n-1}\times \mathbb{R};\; x_n=0\right\}.
\]
Let $\mathcal{N}_0$ be the neighborhood of $\mathbf{I}$ in $C^{2,1}(\overline{\Omega};\mathbb{R}^n\times\mathbb{R}^n)$ given by Lemma \ref{lemmaPCH}. Pick $A\in \mathcal{N}_0$ and let $u\in H^2(Q;\mathbb{C})$ satisfying $\mathcal{L}_A^w=0$ in $Q$ and $u=0$  in $\mathcal{O}\times (-\mathfrak{t},\mathfrak{t})$, where $\mathcal{O}$ is an open subset satisfying $\overline{\mathcal{O}}\subsetneqq \Omega$.

Fix $y\in \partial \mathcal{O}\cap \Omega$ and $r>0$ so that $B(y,r)\subset \Omega$. Pick then  $y_0\in \mathcal{O}\cap B(y,r)$ sufficiently close to $y$ in such a way that $\partial B(y_0, d)\cap \partial \mathcal{O}\ne \emptyset$, with $d=\mbox{dist}(y_0,\partial \mathcal{O})$. Let then $z\in \partial B(y_0, d)\cap \partial \mathcal{O}$. Making a translation and a change of coordinates if necessary, we may assume that $z=0$ and $B(y_0,d)\subset\{(y',y_n)\in \mathbb{R}^n;\; x_n<0\}$. We still denote, for notational convenience, the new matrix obtained after this translation and this change of coordinates by $A$. In that case, according to Lemma \ref{lemmaPCH}, $A$ belongs to the neighborhood $\mathcal{N}$ appearing in this lemma. Whence, $\mbox{supp}(u(\cdot, t))\cap B(z,\rho)\subset H_+$, for some $\rho>0$, with
\[
H_+=\left\{ (x',x_n)\in \mathbb{R}^{n-1}\times \mathbb{R};\; x_n\ge 0\right\}.
\]
Let $\varphi$ given by \eqref{PCH0} with $\vartheta =0$ and $\tilde{A}=A_H$. If $v$ is defined in a neighborhood $\tilde{\omega}$ by $v(y,\cdot)=u(\varphi^{-1}(y),\cdot)$ then straightforward computations give $\mathcal{L}^w_{\tilde{A}}v=0$ in $\tilde{\omega}$ and $\mbox{supp}(v(\cdot ,t))\subset E_+$ with
\[
E_+=\{ (y',y_n)\in \tilde{\omega};\; 0<y_n<1, y_n\ge |y'|^2\}.
\]
We complete the proof similarly to that of Theorem \ref{theoremUCw1}, with $\tilde{x}=0$, $B(0,r)\Subset \tilde{\omega}$, $\varkappa =1/4$ and $c=1$. Note that in the present case
\[
\psi (y,t)=(y_n-1)^2/2+|y'|^2/2-\gamma t^2/2+C,
\]
We get that there exists a universal constant $\mathfrak{t}^\ast$ so that, for any $\mathfrak{t}\ge  \mathfrak{t}^\ast$, we find $0<\tau \le \mathfrak{t}$ for which 
$v=0$ in $\tilde{\mathcal{U}}\times (-\tau,\tau)$, for some neighborhood $\mathcal{\tilde{U}}$ of $0$. In consequence $u=0$ in $\mathcal{U}\times (-\tau,\tau)$, where $\mathcal{U}$ is a neighborhood of $z$ in $\Omega$. In other words we proved that $u=0$ in $\mathcal{O}_0\times (-\tau ,\tau)$, where $\mathcal{O}_0=\mathcal{O}\cup \mathcal{U}\supsetneqq \mathcal{O}$. The proof is then complete.
\end{proof}

The result of Theorem \ref{theoremUCw2} is false without the condition that $\mathfrak{t}$ is sufficiently large as shows the non uniqueness result in \cite{AB1995MathZ}. The authors show that, in the two dimensional case, there exists $\mathcal{U}$, a neighborhood of the origin in $\mathbb{R}^2\times \mathbb{R}$, $p\in C^\infty (\mathcal{U})$, $u\in C^\infty (\mathcal{U})$ so that $(\Delta -\partial_t^2+p(x,t))u=0$ in $\mathcal{U}$ and $\mbox{supp}(u)\subset \mathcal {U}\cap \{(x_1,x_2,t)\in \mathbb{R}^2\times \mathbb{R};\; x_2\ge 0\}$.

A better result than that in Theorem \ref{theoremUCw2} can be obtained in the case of operators with time-independent coefficients. Let $\dot{\mathcal{L}}_A^w$ be the operator $\mathcal{L}_A^w$ when $q_0=0$, $q_j=q_j(x)$, $j=1,\ldots ,n$, and $p=p(x)$. 

\begin{theorem}\label{theoremUCw2-bis}
$($\cite{Ro1991CPDE}$)$ There exits $\mathfrak{t}^\ast=\mathfrak{t}(\Omega ,\varkappa, \mathfrak{m})$ so that, for any $\mathfrak{t}>\mathfrak{t}^\ast$ and $\omega \Subset \Omega$, if $u\in H^2(Q)$ satisfies $\dot{\mathcal{L}}_A^w=0$ in $Q$ and $u=0$ in $\omega \times (-\mathfrak{t},\mathfrak{t})$ then $u=0$ in $\Omega \times (-\tau,\tau)$, where $\tau =\mathfrak{t}-\mathfrak{t}^\ast$.
\end{theorem}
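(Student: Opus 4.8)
The plan is to trade this hyperbolic unique continuation for an \emph{elliptic} one by running a Gaussian (Fourier--Bros--Iagolnitzer) transform in the $t$ variable, crucially exploiting that the coefficients of $\dot{\mathcal{L}}_A^w$ do not depend on $t$ (in particular there is no $\partial_t$ term). The key point is that the elliptic propagation below costs no time, so that the only time one must sacrifice is the fixed width of the window on which the transform has to be run. Concretely, I would fix $\mathfrak{t}'\in(0,\mathfrak{t})$ and a cut-off $\chi\in C_0^\infty((-\mathfrak{t},\mathfrak{t}))$ with $\chi\equiv1$ on $[-\mathfrak{t}',\mathfrak{t}']$, so that $\operatorname{supp}\chi'\cup\operatorname{supp}\chi''\subset\{\mathfrak{t}'\le|t|<\mathfrak{t}\}$, and set, for $\lambda>0$ and $z\in\mathbb{C}$,
\[
v_\lambda(x,z)=\int_{\mathbb{R}}e^{-\frac{\lambda}{2}(z-t)^2}\chi(t)u(x,t)\,dt,\qquad x\in\Omega.
\]
For each $\lambda$ this is entire in $z$ and, on $z$-compacta, as regular in $x$ as $u$; moreover $v_\lambda\equiv0$ for $x\in\omega$, since $u(x,\cdot)$ vanishes on $\operatorname{supp}\chi$ there.

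Using $\partial_z^2e^{-\frac{\lambda}{2}(z-t)^2}=\partial_t^2e^{-\frac{\lambda}{2}(z-t)^2}$, integrating by parts twice in $t$, and substituting $\partial_t^2u=\Delta_Au+\sum_jq_j\partial_ju+pu$ --- the operators $\Delta_A$, $\partial_j$ and multiplication by $q_j,p$ all commuting with the ($x$-independent) $t$-integration --- one obtains
\[
\partial_z^2v_\lambda=\Delta_Av_\lambda+\sum_{j=1}^nq_j\partial_jv_\lambda+pv_\lambda+S_\lambda,\qquad S_\lambda(x,z)=\int_{\mathbb{R}}e^{-\frac{\lambda}{2}(z-t)^2}\bigl(2\chi'\partial_tu+\chi''u\bigr)dt.
\]
Restricting to the imaginary axis and writing $w_\lambda(x,y)=v_\lambda(x,iy)$, so that $\partial_y^2w_\lambda=-\partial_z^2v_\lambda|_{z=iy}$, this becomes the \emph{elliptic} equation
\[
\Delta_Aw_\lambda+\partial_y^2w_\lambda+\sum_{j=1}^nq_j\partial_jw_\lambda+pw_\lambda=-S_\lambda(x,iy)\quad\text{in }\Omega\times(-\epsilon,\epsilon)
\]
for any fixed $0<\epsilon<\mathfrak{t}'$ (its principal part $-\sum a_{k\ell}\xi_k\xi_\ell-\eta^2$ is definite). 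Since $|e^{-\frac{\lambda}{2}(iy-t)^2}|=e^{\frac{\lambda}{2}y^2-\frac{\lambda}{2}t^2}$ and $|t|\ge\mathfrak{t}'$ on $\operatorname{supp}\chi'\cup\operatorname{supp}\chi''$, the right-hand side is exponentially small, $\|S_\lambda(\cdot,i\,\cdot)\|\le Ce^{-c\lambda}$ with $c=\tfrac12((\mathfrak{t}')^2-\epsilon^2)$, whereas $\|w_\lambda\|$ stays bounded in $\lambda$ on compacta.

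Now I would propagate: from $w_\lambda\equiv0$ on $\omega\times(-\epsilon,\epsilon)$, a quantitative elliptic unique continuation (three-ball / propagation of smallness) inequality for a second order elliptic operator with $C^{2,1}$ principal part and $L^\infty$ lower order terms --- itself a consequence of an elliptic Carleman estimate of the kind established in the elliptic section below --- iterated along a chain of balls joining $\omega$ to any prescribed $\Omega'\Subset\Omega$ (the number of steps controlled by $\Omega,\varkappa,\mathfrak{m}$), yields $|w_\lambda|\le Ce^{-c'\lambda}$ on $\Omega'\times(-\epsilon',\epsilon')$ for suitable $\epsilon'\in(0,\epsilon)$ and $c'=c'(\Omega,\varkappa,\mathfrak{m})>0$. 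Fixing $x\in\Omega'$ and viewing $z\mapsto v_\lambda(x,z)$ as entire, one has a crude bound $Ce^{C\lambda}$ on the rectangle $\{|\operatorname{Re}z|\le\mathfrak{t}',\ |\operatorname{Im}z|\le\epsilon'\}$ and the bound $Ce^{-c'\lambda}$ on the inner segment $\{iy:|y|\le\epsilon'\}$; a Hadamard three-lines (harmonic measure) interpolation then gives $|v_\lambda(x,t)|\le Ce^{-c''\lambda}$ for real $|t|\le\tau$, provided $\mathfrak{t}'$ and $\epsilon'$ are chosen so that the interpolation exponent defeats the crude growth --- which is possible exactly when $\mathfrak{t}-\tau$ exceeds a threshold $\mathfrak{t}^\ast=\mathfrak{t}^\ast(\Omega,\varkappa,\mathfrak{m})$ absorbing all the losses ($\mathfrak{t}-\mathfrak{t}'$, the shrinkage $\epsilon\rightsquigarrow\epsilon'$, the number of propagation steps, the smallness of $c'$). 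Finally, since $\sqrt{\lambda/2\pi}\,v_\lambda(x,t)\to\chi(t)u(x,t)$ as $\lambda\to\infty$ (a Gaussian approximate identity, legitimate because $u\in H^2(Q)$) and $\chi\equiv1$ near $t=0$, passing to the limit forces $u(x,t)=0$ for $|t|<\tau$; as $\Omega'\Subset\Omega$ was arbitrary, $u\equiv0$ in $\Omega\times(-\tau,\tau)$.

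The main obstacle is the last step: one must run the elliptic propagation of smallness with constants explicit enough that the factor $e^{-c'\lambda}$ genuinely survives, and then balance it against the exponential growth of $v_\lambda$ across the complex strip so that the Hadamard interpolation produces true smallness on a real time interval of positive length. It is precisely this balance that pins down the admissible threshold $\mathfrak{t}^\ast(\Omega,\varkappa,\mathfrak{m})$ and the relation $\tau=\mathfrak{t}-\mathfrak{t}^\ast$; everything else (the regularity and entirety of $v_\lambda$, the commutations and integrations by parts, and the chaining of balls across $\Omega$) is routine.
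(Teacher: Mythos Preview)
Your proposal is correct and follows precisely the route the paper itself points to: the paper does not give a full proof of this theorem but only cites \cite{Ro1991CPDE} and remarks that ``the main idea in the proof consists in transforming, via the Fourier--Bros--Iagolnitzer transform, the wave operator $\dot{\mathcal{L}}_A^w$ into an elliptic operator for which uniqueness of continuation property result is known.'' Your Gaussian transform $v_\lambda$, the reduction to an elliptic equation in $(x,y)$ with an exponentially small source, the propagation of smallness via elliptic Carleman/three-ball estimates, and the Hadamard interpolation back to real time are exactly the ingredients of Robbiano's argument, and your identification of the balancing step that produces the threshold $\mathfrak{t}^\ast$ and the loss $\tau=\mathfrak{t}-\mathfrak{t}^\ast$ is on the mark.
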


The main idea in the proof of Theorem \ref{theoremUCw2-bis} consists in transforming, via the Fourier-Bros-Iagolnitzer transform, the wave operator $\dot{\mathcal{L}}_A^w$ into an elliptic operator for which uniqueness of continuation property result is known.

Theorem \ref{theoremUCw2} can be used to establish a result on local uniqueness of continuation from Cauchy data on a part of the boundary.

\begin{corollary}\label{corollaryUCw1}
Let $\mathfrak{t}^\ast$ and $\mathcal{N}$ be as in Theorem \ref{theoremUCw2} with $\Omega$ substituted by larger domain $\hat{\Omega}\Supset \Omega$. Let $\Gamma_0$ be a nonempty open subset of $\Gamma$ and $\Sigma_0=\Gamma_0\times (-\mathfrak{t},\mathfrak{t})$ with $\mathfrak{t}\ge \mathfrak{t}^\ast$. There exist $\mathcal{U}$ a neighborhood of a point of $\Gamma_0$ in $\Omega$ and $0<\tau \le \mathfrak{t}$ so that if  $A\in \mathcal{N}$, and if $u\in H^2(Q;\mathbb{C})$ satisfies $\mathcal{L}_A^wu=0$ in $Q$ and $u=\partial_\nu u=0$ on $\Sigma_0$ then $u=0$ in $\mathcal{U}\times (-\tau ,\tau)$.
\end{corollary}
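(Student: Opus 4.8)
The plan is to reduce the statement to the interior weak unique continuation property of Theorem~\ref{theoremUCw2}, applied after extending $u$ by zero across $\Gamma_0$. Since the Cauchy data of $u$ are controlled only on $\Sigma_0$ and not on all of $\Sigma$, the naive zero extension of $u$ to $\hat{\Omega}$ need not be $H^2$; so I would first pass to a convenient intermediate domain. Fix $z_0\in\Gamma_0$ and choose $r'>0$ so small that $\overline{B'}\subset\hat{\Omega}$ and $\overline{B'}\cap\Gamma\subset\Gamma_0$, where $B'=B(z_0,r')$. Let
\[
\Omega^\sharp=\Omega\cup(B'\setminus\overline{\Omega})\cup(B'\cap\Gamma).
\]
After a harmless smoothing of the edge $\partial B'\cap\Gamma$, this is a bounded Lipschitz domain with $\Omega\subset\Omega^\sharp\subset\hat{\Omega}$ which differs from $\Omega$ only across the subset $\Gamma\cap B'$ of $\Gamma_0$; in particular $B'\subset\Omega^\sharp$.

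Next, set $\hat u=u$ on $\Omega$ and $\hat u=0$ on $\Omega^\sharp\setminus\overline{\Omega}$, and extend the lower order coefficients of $\mathcal{L}_A^w$ by $0$ off $\Omega$ (the leading coefficient $A$, being defined on $\overline{\hat{\Omega}}\supset\overline{\Omega^\sharp}$, needs no modification). Because $u=\partial_\nu u=0$ on $\Sigma_0\supset(\Gamma\cap B')\times(-\mathfrak{t},\mathfrak{t})$ — and this is exactly where \emph{both} vanishing conditions are used — the zero extension of $u$ across $\Gamma\cap B'$ remains $H^2$, so $\hat u\in H^2(\Omega^\sharp\times(-\mathfrak{t},\mathfrak{t});\mathbb{C})$. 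Consequently $\mathcal{L}_A^w\hat u\in L^2(\Omega^\sharp\times(-\mathfrak{t},\mathfrak{t}))$, and it vanishes almost everywhere, being $0$ on $\Omega\times(-\mathfrak{t},\mathfrak{t})$ (the hypothesis) and on $(\Omega^\sharp\setminus\overline{\Omega})\times(-\mathfrak{t},\mathfrak{t})$ (where $\hat u=0$). Finally, with $B''=B(z_0,r'')$ and $0<r''<r'$, the set $\mathcal{O}:=B''\setminus\overline{\Omega}$ is a nonempty open subset of $\Omega^\sharp$ with $\overline{\mathcal{O}}\subset\overline{B''}\subset B'\subset\Omega^\sharp$, hence $\overline{\mathcal{O}}\subsetneqq\Omega^\sharp$, and $\hat u=0$ on $\mathcal{O}\times(-\mathfrak{t},\mathfrak{t})$.

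Now I would apply Theorem~\ref{theoremUCw2} with $\Omega$ replaced by $\Omega^\sharp$: its constant $\mathfrak{t}^\ast$ is universal and the admissible neighborhood of $\mathbf{I}$ there only expresses a uniform $C^{2,1}$-closeness (stable under orthogonal transformations, by Lemma~\ref{lemmaPCH}), so the theorem applies to $\Omega^\sharp$ and, shrinking $\mathcal{N}$ if needed, its output may be taken independent of $A\in\mathcal{N}$. For the hole $\mathcal{O}$ this produces some $0<\tau\le\mathfrak{t}$ and an open set $\mathcal{O}_0\supsetneqq\overline{\mathcal{O}}$, depending only on $\Gamma_0$ (through $z_0,r',r''$) and on $\mathcal{N}$, such that $\hat u=0$ in $\mathcal{O}_0\times(-\tau,\tau)$. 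Since $z_0\in\Gamma$ lies in $\overline{B''\setminus\overline{\Omega}}=\overline{\mathcal{O}}\subset\mathcal{O}_0$ and $\mathcal{O}_0$ is open, $\mathcal{O}_0$ contains a ball $V$ centred at $z_0$; putting $\mathcal{U}=V\cap\Omega$, which is a neighborhood of the point $z_0\in\Gamma_0$ in $\Omega$, we conclude $u=\hat u=0$ in $\mathcal{U}\times(-\tau,\tau)$, which is the assertion.

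The PDE content is entirely contained in Theorem~\ref{theoremUCw2}; the only substance of the reduction is the two elementary but essential points used above — that the Cauchy data $u=\partial_\nu u=0$ make the zero extension $H^2$ and annihilated by $\mathcal{L}_A^w$ across $\Gamma_0$ (neither condition alone suffices), and the geometric bookkeeping making $\Omega^\sharp$ Lipschitz with $\overline{\mathcal{O}}$ a proper subset of $\Omega^\sharp$ so that Definition~\ref{gpucw} applies literally. This is where I would expect to have to be careful, but it is routine; alternatively one could bypass Theorem~\ref{theoremUCw2} and run the Carleman argument of Theorem~\ref{theoremUCw1} directly on $\Omega^\sharp$, with a pseudo-convex weight centred just outside $\Omega$ near $z_0$ as in Example~\ref{example1}.
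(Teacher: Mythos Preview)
Your proof is correct and follows essentially the same route as the paper: extend $u$ by zero across a portion of $\Gamma_0$ using the vanishing Cauchy data to retain $H^2$ regularity, then invoke Theorem~\ref{theoremUCw2} on the enlarged domain to propagate the zero set into $\Omega$. You supply more geometric detail (the Lipschitz smoothing of $\Omega^\sharp$, the proper containment $\overline{\mathcal{O}}\subsetneqq\Omega^\sharp$, and the uniformity in $A\in\mathcal{N}$) than the paper, which simply asserts the existence of a suitable neighborhood $\mathcal{V}$ and applies the theorem directly.
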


\begin{proof}
Pick $A\in \mathcal{N}$ and  $u\in H^2(Q;\mathbb{C})$ satisfying $\mathcal{L}_A^wu=0$ in $Q$ and $u=\partial_\nu u=0$ on $\Sigma_0$. Then there exists  $\mathcal{V}\subset \hat{\Omega}$, a neighborhood of a point in $\Gamma_0$, so that $\hat{u}$, the extension of $u$ by zero in $\mathbb{R}^n\setminus \overline{\Omega}$, belongs to $H^2(\Omega'\times (-\mathfrak{t},\mathfrak{t}))$, with $\Omega'=\Omega \cup \mathcal{V}$, satisfies $\mathcal{L}_A^w\hat{u}=0$ in $\Omega'\times (-\mathfrak{t},\mathfrak{t})$ and $\hat{u}=0$ in $[(\Omega'\setminus \overline{\Omega}) \cap \mathcal{V}]\times (-\mathfrak{t},\mathfrak{t})$. Theorem \ref{theoremUCw2} allows us to conclude that there exist $\mathcal{U}$, a neighborhood of a point of $\Gamma_0$ in $\Omega$, and $0<\tau \le \mathfrak{t}$ so that $u=0$ in $\mathcal{U}\times (-\tau ,\tau)$.
\end{proof}

It is worth mentioning that the following global unique continuation result from boundary data can be deduced from \cite[Theorem 1.1]{BC2019Arxiv}.

\begin{theorem}\label{UCwg}
Let $\Gamma_0$ an arbitrary non empty open subset of $\Gamma$. There exits $\mathfrak{t}^\ast=\mathfrak{t}(\Omega ,\varkappa, \mathfrak{m})$ so that, for any $\mathfrak{t}>\mathfrak{t}^\ast$ we find $0<\mathfrak{t}_0<\mathfrak{t}$ with the property that if $u\in C^\infty (\overline{\Omega}\times [-\mathfrak{t},\mathfrak{t}])$ satisfies $\mathcal{L}_{A,0}^wu=0$  and 
\[
u=\partial_\nu u=0\quad  \mbox{on}\quad \{\Gamma_0\times (-\mathfrak{t},\mathfrak{t})\}\cup \{\Gamma \times [(-\mathfrak{t},-\mathfrak{t}_0)\cup (\mathfrak{t}_0,\mathfrak{t})]\}
\]
then $u$ is identically equal to zero.
\end{theorem}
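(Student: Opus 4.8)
The plan is to obtain this statement as a consequence of the quantitative unique continuation estimate of \cite[Theorem 1.1]{BC2019Arxiv} for the wave equation from Cauchy data on a lateral subboundary, supplemented by an elementary extension-and-energy argument that disposes of the two temporal end layers. Throughout, the hypothesis $u\in C^\infty(\overline{\Omega}\times[-\mathfrak{t},\mathfrak{t}])$ together with the fact that $\mathcal{L}_{A,0}^w=\Delta_A-\partial_t^2$ has time-independent principal part and no lower order terms places $u$ squarely in the class of solutions treated in that reference.

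First I would recall the mechanism behind \cite{BC2019Arxiv}: applying a Fourier--Bros--Iagolnitzer transform in the time variable, of the form
\[
U_\tau(x,s)=\int_{-\mathfrak{t}}^{\mathfrak{t}}e^{-\tau(s-it)^2/2}u(x,t)\,dt,\qquad \tau>0,
\]
turns $\mathcal{L}_{A,0}^wu=0$ into the \emph{elliptic} equation $(\Delta_A+\partial_s^2)U_\tau=R_\tau$ in the variables $(x,s)$, where the remainder $R_\tau$ is built from the boundary values of $u$ and $\partial_tu$ at $t=\pm\mathfrak{t}$ and is controlled exponentially in $\tau$ as long as $s$ stays in a fixed interval $(-\mathfrak{t}+\mathfrak{t}_0,\mathfrak{t}-\mathfrak{t}_0)$. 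The vanishing of $u$ and $\partial_\nu u$ on $\Gamma_0\times(-\mathfrak{t},\mathfrak{t})$ transfers to (essentially) vanishing Cauchy data for $\Delta_A+\partial_s^2$ on $\Gamma_0\times(-\mathfrak{t},\mathfrak{t})$, so a quantitative unique continuation inequality for the elliptic operator (a three-balls or Carleman estimate of the type already used in the references quoted in the introduction) yields a bound on $U_\tau$ in $\Omega\times(-\mathfrak{t}+\mathfrak{t}_0,\mathfrak{t}-\mathfrak{t}_0)$ that tends to $0$ as $\tau\to\infty$. A limiting argument as $\tau\to\infty$, using the injectivity of the transform, then gives $u=0$ on $\Omega\times(-\mathfrak{t}+\mathfrak{t}_0,\mathfrak{t}-\mathfrak{t}_0)$; the threshold $\mathfrak{t}^\ast=\mathfrak{t}^\ast(\Omega,\varkappa,\mathfrak{m})$ is exactly what is needed so that, in the elliptic picture, the piece $\Gamma_0$ controls the whole section $\Omega$, and $\mathfrak{t}_0\in(0,\mathfrak{t})$ is the width lost to the Gaussian geometry of the transform.

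Next I would dispose of the two end layers. On the strip $\mathfrak{t}_0<t<\mathfrak{t}$ one has $u=\partial_\nu u=0$ on the \emph{entire} lateral boundary $\Gamma$, so the extension $\tilde u$ of $u$ by zero to $B\times(\mathfrak{t}_0,\mathfrak{t})$, where $B\Supset\Omega$ is a large ball and $A$ is extended to a uniformly elliptic $C^{2,1}$ matrix $\tilde A$ on $B$ (say equal to $\mathbf{I}$ near $\partial B$), belongs to $H^2$, solves $(\Delta_{\tilde A}-\partial_t^2)\tilde u=0$, is compactly supported in space, and vanishes on $(B\setminus\overline\Omega)\times(\mathfrak{t}_0,\mathfrak{t})$. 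Choosing $\mathfrak{t}_0<\mathfrak{t}/2$ one may pick $t_0\in(\mathfrak{t}_0,\mathfrak{t}-\mathfrak{t}_0)$; the previous step gives $u(\cdot,t_0)=\partial_tu(\cdot,t_0)=0$ on $\Omega$, hence $\tilde u(\cdot,t_0)=\partial_t\tilde u(\cdot,t_0)=0$ on $B$, and the standard energy estimate (uniqueness of the Cauchy problem for a compactly supported solution of a second order hyperbolic equation with smooth coefficients, via Gr\"onwall applied to $\int_B[(\partial_t\tilde u)^2+|\nabla\tilde u|_{\tilde A}^2]$) forces $\tilde u\equiv0$ on $B\times(t_0,\mathfrak{t})$. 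Therefore $u=0$ on $\Omega\times(\mathfrak{t}-\mathfrak{t}_0,\mathfrak{t})$, and symmetrically $u=0$ on $\Omega\times(-\mathfrak{t},-\mathfrak{t}+\mathfrak{t}_0)$. Gluing the three regions and using continuity of $u$ up to the closed cylinder yields $u\equiv0$ on $\overline{Q}$.

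The hard part is entirely the first step: one must estimate $R_\tau$ and the boundary contributions at $t=\pm\mathfrak{t}$ uniformly in $\tau$ and then optimize $\tau$ against the elliptic three-balls estimate, which is precisely the content of \cite[Theorem 1.1]{BC2019Arxiv}; so in the present paper it is legitimate simply to invoke that theorem, the only new work being the bookkeeping that the class of solutions, the arbitrariness of $\Gamma_0$, the two temporal strips near $t=\pm\mathfrak{t}$, and the dependence of $\mathfrak{t}^\ast$ and $\mathfrak{t}_0$ on $(\Omega,\varkappa,\mathfrak{m})$ match its hypotheses.
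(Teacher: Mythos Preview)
The paper does not give a proof of this theorem; it merely states that the result ``can be deduced from \cite[Theorem 1.1]{BC2019Arxiv}.'' Your proposal does precisely that and fills in the missing details: you invoke the FBI-transform based result of \cite{BC2019Arxiv} (using only the hypothesis on $\Gamma_0\times(-\mathfrak{t},\mathfrak{t})$) to obtain $u=0$ on a central slab $\Omega\times(-\mathfrak{t}+\mathfrak{t}_0,\mathfrak{t}-\mathfrak{t}_0)$, and then use the \emph{additional} full-boundary hypothesis on $\Gamma\times[(-\mathfrak{t},-\mathfrak{t}_0)\cup(\mathfrak{t}_0,\mathfrak{t})]$ together with an extension-by-zero and energy argument to cover the two end layers. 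This is a correct reconstruction of what the author presumably intended.

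One minor bookkeeping remark: you are using the same symbol $\mathfrak{t}_0$ both for the time-width lost in the FBI argument and for the parameter appearing in the statement of the theorem. These can indeed be identified, but your overlap condition in Step~2 (the existence of $t_0\in(\mathfrak{t}_0,\mathfrak{t}-\mathfrak{t}_0)$) requires $\mathfrak{t}_0<\mathfrak{t}/2$. Since the FBI loss is essentially the fixed geometric constant coming from \cite{BC2019Arxiv}, this just means that the $\mathfrak{t}^\ast$ in the present theorem should be taken to be twice that of \cite{BC2019Arxiv}; this is harmless and does not affect the stated dependence $\mathfrak{t}^\ast=\mathfrak{t}^\ast(\Omega,\varkappa,\mathfrak{m})$.
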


We end this subsection by remarking that we can proceed similarly to Theorem \ref{theoremUCw2} to prove the  unique continuation property across  a pseudo-convex hypersurface.

\begin{theorem}\label{theoremUCw3}
Let $A\in \mathscr{M}(\Omega ,\varkappa ,\mathfrak{m})$ and $H=\{x\in \omega ;\;\theta (x)=\theta (\tilde{x})\}$  be a $A$-pseudo-convex hypersurface defined in a neighborhood $\omega$ of $\tilde{x}\in \Omega$ with $\theta \in C^{3,1}(\overline{\omega})$. Then there exist $\mathcal{B}$, a neighborhood of $\tilde{x}$, and $\mathfrak{t}^\ast>0$ so that, for each $\mathfrak{t}\ge \mathfrak{t}^\ast$, we find $0<\tau \le \mathfrak{t}$ with the property that if $u\in H^2(\omega \times (-\mathfrak{t},\mathfrak{t}))$ satisfies $\mathcal{L}_A^wu=0$ in $\omega\times (-\mathfrak{t},\mathfrak{t})$ and 
$\mbox{supp}(u(\cdot ,t))\subset H_+=\{ x\in \omega ;\; \theta (x)\ge \theta (\tilde{x})\}$, $t\in  (-\mathfrak{t},\mathfrak{t})$,  then $u=0$ in $\mathcal{B}\times (-\tau,\tau)$.
\end{theorem}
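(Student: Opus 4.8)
The plan is to follow, almost verbatim, the scheme of the proofs of Theorems \ref{theoremUCw1} and \ref{theoremUCw2}; the only structural change is that here the $A$-pseudo-convexity of $H$ is a hypothesis, so it plays the role that the proximity of $A$ to $\mathbf{I}$ (together with Lemma \ref{lemmaPCH}) played in Theorem \ref{theoremUCw2}. First I would reduce to the model hypersurface: after a translation take $\tilde x=0$, $\theta(\tilde x)=0$, and after an orthogonal change of variables (Lemma \ref{lemmaPCH0}) assume $\nabla'\theta(0)=0$, $\partial_n\theta(0)\ne0$, so that the implicit function theorem writes $H$ near $0$ as $\{x_n=\vartheta(x')\}$ with $\vartheta\in C^{3,1}$, $\vartheta(0)=0$, $\nabla'\vartheta(0)=0$, the matrix becoming some $\hat A\in\mathscr M$ with constants controlled by $n,\varkappa,\mathfrak m$. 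Applying the diffeomorphism $\varphi=\varphi_H$ of \eqref{PCH0} then straightens $H$ to $\{y_n=|y'|^2\}$ and produces $\tilde A=A_H$ of \eqref{PCH2}, with $\tilde A\in\mathscr M(\tilde\omega,\varkappa',\tilde{\mathfrak m})$ for constants depending only on $n,\varkappa,\mathfrak m,\vartheta$; by Definition \ref{definitionPCH1}, the assumption that $H$ is $A$-pseudo-convex says exactly that $\tilde\psi_0(y)=(y_n-1)^2+|y'|^2$ is $A_H$-pseudo-convex in $\tilde\omega$, with some constant $\kappa>0$ after shrinking $\tilde\omega$.

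Next I would transport the solution: set $v(y,t)=u(\varphi^{-1}(y),t)$. A routine change of variables gives $v\in H^2(\tilde\omega\times(-\mathfrak t,\mathfrak t))$, $\mathcal L^w_{\tilde A}v=0$ in $\tilde\omega\times(-\mathfrak t,\mathfrak t)$ for an operator of the form $\Delta_{\tilde A}-\partial_t^2$ plus first- and zeroth-order terms whose $L^\infty$-norms depend only on $n,\varkappa,\mathfrak m,\vartheta$, and, since $\mbox{supp}(u(\cdot,t))\subset H_+$, $\mbox{supp}(v(\cdot,t))\subset E_+:=\{y\in\tilde\omega;\ 0\le y_n<1,\ y_n\ge|y'|^2\}$ for every $t$. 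The Carleman weight is built from $\tilde\psi_0$: put $\psi_0(y)=\tilde\psi_0(y)/2$, which is $A_H$-pseudo-convex with constant $\kappa/2$ (as $\Theta_A(\cdot)$ is linear in the function) and which satisfies $\nabla\psi_0(0)=(0,-1)\ne0$, hence $\nabla\psi_0\ne0$ and $|\nabla\psi_0|_{\tilde A}^2\ge\delta_0>0$ on a ball $B(0,r_0)\Subset\tilde\omega$. For $\gamma>0$ and $C$ large, $\psi(y,t)=\psi_0(y)-\gamma t^2/2+C\ge0$; since $\partial_t^2\psi_1=-\gamma$ and $(\partial_t\psi_1)^2=\gamma^2t^2\le\gamma^2\mathfrak t^2$, conditions \eqref{int1}--\eqref{int2} hold on $B(0,r_0)\times(-\mathfrak t,\mathfrak t)$ as soon as $\gamma\mathfrak t$ is small enough and $\gamma\le\kappa/(8\varkappa')$; choosing $\gamma$ a small fixed multiple of $1/\mathfrak t$ satisfies both once $\mathfrak t\ge\mathfrak t^\ast$ for a threshold $\mathfrak t^\ast=\mathfrak t^\ast(n,\varkappa,\mathfrak m,\vartheta,\kappa)$. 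Thus $\phi=e^{\lambda\psi}$ is a weight function for $\Delta_{\tilde A}-\partial_t^2$ on $B(0,r_0)\times(-\mathfrak t,\mathfrak t)$, and Corollary \ref{CarlemanCorollary1} applies.

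The elementary but essential geometric point is the same as in Theorem \ref{theoremUCw1}: for $y\in E_+\setminus\{0\}$ close to $0$, using $|y'|^2\le y_n$ one gets $\psi_0(y)=\tfrac12(1-2y_n+y_n^2+|y'|^2)\le\tfrac12-\tfrac{y_n}{2}+\tfrac{y_n^2}{2}<\tfrac12=\psi_0(0)$, so $[E_+\cap B(0,r_0)]\setminus\{0\}\subset\{\psi_0<\psi_0(0)\}$. Then I would repeat the conclusion of Theorem \ref{theoremUCw1} line by line: fix $\chi\in C_0^\infty(B(0,r_0))$ equal to $1$ on $B(0,\rho_1)$, $\rho_0<\rho_1$ and $\epsilon>0$ as there so that $E_+\cap[B(0,r_0)\setminus\overline B(0,\rho_1)]\subset\{\psi_0<\psi_0(0)-\epsilon\}$ and $E_+\cap B(0,\rho_0)\subset\{\psi_0>\psi_0(0)-\epsilon/2\}$, and $\eta\in C_0^\infty((-\mathfrak t,\mathfrak t))$ equal to $1$ on $[-\mathfrak t/2,\mathfrak t/2]$; set $w=\chi\eta v$, so that $\mathcal L^w_{\tilde A}w=f_1+f_2$ with $\mbox{supp}(f_1)$ contained in the annular region $\{\psi_0<\psi_0(0)-\epsilon\}\times(-\mathfrak t,\mathfrak t)$ and $\mbox{supp}(f_2)$ in $[E_+\cap B(0,r_0)]\times([-\mathfrak t,-\mathfrak t/2]\cup[\mathfrak t/2,\mathfrak t])$, where $\psi$ is very negative for $\mathfrak t$ large. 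Apply Corollary \ref{CarlemanCorollary1} to $w$ on $B(0,r_0)\times(-\mathfrak t,\mathfrak t)$ — all $\partial Q$-terms vanish since $w$ is compactly supported in space and near $t=\pm\mathfrak t$ — and restrict the left-hand side to $\mathbf Q_0=[E_+\cap B(0,\rho_0)]\times(-\varrho\mathfrak t,\varrho\mathfrak t)$ with $\varrho$ chosen (and $\mathfrak t$ enlarged) exactly as in Theorem \ref{theoremUCw1} so that $e^{\lambda\psi}\ge c_0>\max(c_1,c_2)$ on $\mathbf Q_0$ while $e^{\lambda\psi}\le c_1$ on $\mbox{supp}(f_1)$ and $\le c_2$ on $\mbox{supp}(f_2)$. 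Letting $\tau\to\infty$ forces $v=0$ on $\mathbf Q_0$, hence (since $v$ vanishes off $E_+$) $v=0$ on $B(0,\rho_0)\times(-\varrho\mathfrak t,\varrho\mathfrak t)$; undoing $\varphi$ and the orthogonal change gives $u=0$ on $\mathcal B\times(-\tau,\tau)$ with $\mathcal B$ a neighborhood of $\tilde x$ and $\tau=\varrho\mathfrak t\le\mathfrak t$.

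The main obstacle I expect is the verification that $\psi(y,t)=\psi_0(y)-\gamma t^2/2+C$ is a legitimate weight function for the transformed wave operator, i.e. reconciling \eqref{int2} — which caps $\gamma$ by a constant fixed by the data and $\kappa$ — with \eqref{int1} — which demands $\gamma\mathfrak t$ small so that $\gamma^2t^2$ never reaches $|\nabla\psi_0|_{\tilde A}^2$; this is precisely what forces the hypothesis $\mathfrak t\ge\mathfrak t^\ast$ and the scaling $\gamma\sim1/\mathfrak t$. The remaining steps — the change of variables bookkeeping, the level-set estimates, and the absorption in the Carleman inequality — are routine repetitions of material already established for Theorems \ref{theoremUCw1} and \ref{theoremUCw2}.
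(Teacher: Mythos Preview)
Your proposal is correct and follows precisely the approach the paper intends: the paper does not give a detailed proof but simply remarks that one can proceed similarly to Theorem \ref{theoremUCw2}, and your argument does exactly that---carry out the straightening of $H$ via $\varphi_H$ as in Subsection \ref{subsectionPCH}, invoke Definition \ref{definitionPCH1} (the assumed $A$-pseudo-convexity of $H$) in place of Lemma \ref{lemmaPCH}, and then repeat the Carleman/level-set argument of Theorem \ref{theoremUCw1} with the weight $\psi(y,t)=(y_n-1)^2/2+|y'|^2/2-\gamma t^2/2+C$. The identification of the constraint $\mathfrak t\ge\mathfrak t^\ast$ with the need to choose $\gamma\sim 1/\mathfrak t$ so that both \eqref{int1} and \eqref{int2} hold is also the correct reading of the argument.
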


\subsection{Observability inequality}

We suppose in this subsection that $t_1=0$ and $t_2=\mathfrak{t}>0$.

We shall need in the sequel the following technical lemma.
\begin{lemma}\label{lemmaOIw1}
Fix $0<\alpha <1$ and let $0\le \psi_0\in C^1(\overline{\Omega})$ satisfying 
\[
\min_{x\in \overline{\Omega}}|\nabla \psi_0 |_A^2:=\delta_0>0.
\]
Let $\mathbf{m}=\|\psi_0\|_{L^\infty(\Omega)}$ and define, for an arbitrary constant $C>0$,
\begin{equation}\label{OIw0}
\psi(x,t)=\psi_0(x)-\mathfrak{t}^{-2+\alpha}(t-\mathfrak{t}/2)^2+C,\quad x\in \overline{\Omega},\; t\in [0,\mathfrak{t}].
\end{equation}
If  $\mathfrak{t}> \mathfrak{t}_\alpha =\max\left( \delta_0^{-1/[2(1-\alpha)]}, (64\mathfrak{m}/2)^{1/\alpha}\right)$ then
\begin{align}
&\min_{\overline{Q}}\left(|\nabla \psi|_A^2-(\partial_t\psi)^2\right)^2:=\delta >0, \label{OIw1}
\\
&\psi(x,t) \ge - \mathfrak{t}^\alpha /64+C,\quad (x,t)\in \overline{\Omega}\times [3\mathfrak{t}/8,5\mathfrak{t}/8], \label{OIw2}
\\
&\psi (x,t)\le -2\mathfrak{t}^\alpha/64+C,\quad (x,t)\in \overline{\Omega}\times \left( [0,\mathfrak{t}/4]\cup [3\mathfrak{t}/4,\mathfrak{t}]\right).\label{OIw3}
\end{align}
\end{lemma}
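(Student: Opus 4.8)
This is a purely computational lemma about the auxiliary weight $\psi$ defined in \eqref{OIw0}; nothing subtle happens, but the bookkeeping on the threshold $\mathfrak{t}_\alpha$ has to be kept straight. The plan is to verify \eqref{OIw1}, \eqref{OIw2} and \eqref{OIw3} one at a time, in that order, using only the explicit form of $\psi$ and the hypotheses $\min_{\overline{\Omega}}|\nabla\psi_0|_A^2=\delta_0>0$ and $\mathbf{m}=\|\psi_0\|_{L^\infty(\Omega)}$.

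For \eqref{OIw1}, note that $\nabla\psi=\nabla\psi_0$ (the time term does not depend on $x$), so $|\nabla\psi|_A^2=|\nabla\psi_0|_A^2\ge\delta_0$ on $\overline{\Omega}$, while $\partial_t\psi=-2\mathfrak{t}^{-2+\alpha}(t-\mathfrak{t}/2)$, hence $|\partial_t\psi|\le 2\mathfrak{t}^{-2+\alpha}\cdot(\mathfrak{t}/2)=\mathfrak{t}^{-1+\alpha}$ on $[0,\mathfrak{t}]$, so that $(\partial_t\psi)^2\le\mathfrak{t}^{-2(1-\alpha)}$. Therefore $|\nabla\psi|_A^2-(\partial_t\psi)^2\ge\delta_0-\mathfrak{t}^{-2(1-\alpha)}$, which is strictly positive precisely when $\mathfrak{t}^{2(1-\alpha)}>\delta_0^{-1}$, i.e.\ when $\mathfrak{t}>\delta_0^{-1/[2(1-\alpha)]}$; this is guaranteed by $\mathfrak{t}>\mathfrak{t}_\alpha$. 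Squaring gives a strictly positive lower bound, so $\delta>0$.

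For \eqref{OIw2} and \eqref{OIw3}, the idea is to compare $\psi_0\ge 0$ (so $\psi(x,t)\ge -\mathfrak{t}^{-2+\alpha}(t-\mathfrak{t}/2)^2+C$ always) against the upper bound $\psi_0\le\mathbf{m}$ (so $\psi(x,t)\le\mathbf{m}-\mathfrak{t}^{-2+\alpha}(t-\mathfrak{t}/2)^2+C$). On $t\in[3\mathfrak{t}/8,5\mathfrak{t}/8]$ one has $(t-\mathfrak{t}/2)^2\le(\mathfrak{t}/8)^2=\mathfrak{t}^2/64$, so $\mathfrak{t}^{-2+\alpha}(t-\mathfrak{t}/2)^2\le\mathfrak{t}^\alpha/64$, giving $\psi\ge -\mathfrak{t}^\alpha/64+C$, which is \eqref{OIw2}. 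On $t\in[0,\mathfrak{t}/4]\cup[3\mathfrak{t}/4,\mathfrak{t}]$ one has $(t-\mathfrak{t}/2)^2\ge(\mathfrak{t}/4)^2=\mathfrak{t}^2/16=4\mathfrak{t}^2/64$, so $\mathfrak{t}^{-2+\alpha}(t-\mathfrak{t}/2)^2\ge 4\mathfrak{t}^\alpha/64$, hence $\psi\le\mathbf{m}-4\mathfrak{t}^\alpha/64+C$. To absorb $\mathbf{m}$ into the remaining budget we need $\mathbf{m}\le 2\mathfrak{t}^\alpha/64$, i.e.\ $\mathfrak{t}^\alpha\ge 32\mathbf{m}$, i.e.\ $\mathfrak{t}\ge(32\mathbf{m})^{1/\alpha}$; under the stated threshold $\mathfrak{t}>\mathfrak{t}_\alpha\ge(64\mathfrak{m}/2)^{1/\alpha}=(32\mathfrak{m})^{1/\alpha}$ this holds, and then $\psi\le -2\mathfrak{t}^\alpha/64+C$, which is \eqref{OIw3}.

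There is no real obstacle here; the only thing to watch is that the two conditions on $\mathfrak{t}$ coming from \eqref{OIw1} and from the absorption of $\mathbf{m}$ in \eqref{OIw3} are exactly the two arguments of the maximum defining $\mathfrak{t}_\alpha$, so one should simply check at the end that $\mathfrak{t}>\mathfrak{t}_\alpha$ implies both. (I would also remark in passing that the quantity $\delta$ in \eqref{OIw1} is the same $\delta$ that enters the constant package $\mathfrak{d}$ used in the wave Carleman inequality, which is why this lemma is recorded here.)
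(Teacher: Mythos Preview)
Your proof is correct and follows essentially the same route as the paper's: the paper carries out the identical estimate $|\nabla\psi_0|_A^2-(\partial_t\psi)^2\ge\delta_0-\mathfrak{t}^{-2(1-\alpha)}$ for \eqref{OIw1}, declares \eqref{OIw2} ``straightforward'' (you supply the one-line computation), and for \eqref{OIw3} writes $\psi\le\mathbf{m}-\mathfrak{t}^\alpha/16+C<2\mathfrak{t}^\alpha/64-\mathfrak{t}^\alpha/16+C=-2\mathfrak{t}^\alpha/64+C$, which is exactly your absorption argument. Your bookkeeping on how the two thresholds in $\mathfrak{t}_\alpha$ arise is accurate.
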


\begin{proof}
If $\mathfrak{t}\ge \mathfrak{t}_\alpha$ then 
\[
|\nabla \psi_0|^2-(\partial_t\psi)^2\ge \delta_0 -\mathfrak{t}^{-4+2\alpha} \mathfrak{t}^2=\delta_0-\mathfrak{t}^{-2(1-\alpha)}:=\delta >0.
\]
That is we proved \eqref{OIw1}.

Inequality \eqref{OIw2} is straightforward. On the other hand,  we have
\[
\psi(x,t) \le \mathbf{m}-\mathfrak{t}^\alpha/16+C< 2\mathfrak{t}^\alpha /64 -\mathfrak{t}^\alpha /16+C=-2\mathfrak{t}^\alpha/64+C
\]
if $(x,t)\in \overline{\Omega}\times \left( [0,\mathfrak{t}/4]\cup [3\mathfrak{t}/4,\mathfrak{t}]\right)$. That is we proved \eqref{OIw3}.
\end{proof}

We consider in this subsection the following wave operator
\[
\mathcal{L}_A^w=\Delta_A-\partial_t^2 +p\partial_t +q, 
\]
with $p,q\in L^\infty(\Omega;\mathbb{C})$. We associate  to $\mathcal{L}_A^w$ the IBVP
\begin{equation}\label{OIw4}
\left\{
\begin{array}{ll}
\mathcal{L}_A^w=0\quad \mbox{in}\; Q,
\\
(u(\cdot, 0),\partial_tu(\cdot ,0))=(u_0,u_1),
\\
u_{|\Sigma}=0.
\end{array}
\right.
\end{equation}
According to the semigroup theory, for all $(u_0,u_1)\in H_0^1(\Omega )\times L^2(\Omega )$, the IBVP \eqref{OIw4} admits unique solution 
\[
u\in C([0,\mathfrak{t}];H_0^1(\Omega ))\cap C^1([0,\mathfrak{t}];L^2(\Omega )).
\]
We also know that $\partial_\nu u\in L^2(\Sigma )$ (hidden regularity).

From usual energy estimate for wave equations,  if
\begin{equation}\label{OIw5}
\mathcal{E}_u(t)=\|\mathbf{D}_Au(\cdot ,t)\|_{L^2(\Omega;\mathbb{C}^{n+1})},\quad 0\le t\le \mathfrak{t},
\end{equation}
then
\begin{equation}\label{OIw6}
\mathcal{E}_u(t)\le \aleph_0 \mathcal{E}_u(0),\quad 0\le t\le \mathfrak{t},
\end{equation}
where the constant $\aleph_0>0$  only depends of $\Omega$, $A$, $\mathfrak{t}$, $p$ and $q$.

We apply \eqref{OIw6} to $v(\cdot ,t)=u(\cdot ,s-t)$, with fixed $0<t\le s$. We find
\[
\mathcal{E}_u(s-t)=\mathcal{E}_v(t)\le \aleph_1 \mathcal{E}_v(0)=\aleph_1\mathcal{E}_u(s),\quad 0\le t\le s,
\]
where the constant $\aleph_1>0$  only depends of $\Omega$, $A$, $\mathfrak{t}$, $p$ and $q$. We have in particular 
\begin{equation}\label{OIw6.1}
\mathcal{E}_u(0)\le\aleph_1 \mathcal{E}_u(s),\quad 0\le t\le s.
\end{equation}
In light of \eqref{OIw6} and \eqref{OIw6.1} we get
\begin{equation}\label{OIw6.2}
\aleph ^{-1}\mathcal{E}_u(0)\le \mathcal{E}_u(t)\le \aleph \mathcal{E}_u(0),\quad 0\le t\le \mathfrak{t},
\end{equation}
for some constant $\aleph >1$ only depending of $\Omega$, $A$, $\mathfrak{t}$, $p$ and $q$.

We recall that $\Sigma_+=\Gamma_+\times (0,\mathfrak{t})$.

\begin{theorem}\label{theoremOIw1}
Fix $0<\alpha<1$ and assume that $0\le \psi_0\in C^4(\overline{\Omega})$ is $A$-pseudo-convex with constant $\kappa >0$ and let $\Gamma_+=\{x\in \Gamma ;\; \partial_{\nu_A}\psi_0(x)>0\}$. If $\tilde{\mathfrak{t}}_\alpha=\min \left(\mathfrak{t}_\alpha, (8\varkappa/\kappa)^{1/(2-\alpha)}\right)$, where $\mathfrak{t}_\alpha$ be as in Lemma \ref{lemmaOIw1}, then, for any $\mathfrak{t}\ge \tilde{\mathfrak{t}}_\alpha $ and $(u_0,u_1)\in H_0^1(\Omega)\times L^2(\Omega )$, we have
\[
\|(u_0,u_1)\|_{H_0^1(\Omega)\times L^2(\Omega )}\le \aleph \|\partial_\nu u\|_{L^2(\Sigma_+)},
\]
where the constant $\aleph>0$ only depends of  $\Omega$, $\mathfrak{t}$, $\varkappa$, $\kappa$, $\Gamma_+$ and  $u$ is the solution of the IBVP \eqref{OIw4} corresponding to $(u_0,u_1)$.
\end{theorem}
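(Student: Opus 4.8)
The plan is to derive the observability inequality from the Carleman inequality with boundary term on $\Sigma_+$, namely Theorem \ref{CarlemanTheorem1.2} (or rather Corollary \ref{CarlemanCorollary2}, since the operator here carries lower-order terms $p\partial_t+q$), applied to the solution $u$ of the IBVP \eqref{OIw4} after a suitable cutoff in time. First I would check that the hypotheses of Definition \ref{definitionCW}(d) are met for the weight $\psi(x,t)=\psi_0(x)-\mathfrak{t}^{-2+\alpha}(t-\mathfrak{t}/2)^2+C$: the $A$-pseudo-convexity of $\psi_0$ is assumed; condition \eqref{int1} is exactly \eqref{OIw1} of Lemma \ref{lemmaOIw1}, which holds once $\mathfrak{t}\ge \mathfrak{t}_\alpha$; and condition \eqref{int2}, i.e. $|\partial_t^2\psi_1|=2\mathfrak{t}^{-2+\alpha}\le \varkappa^{-1}\kappa/4$, holds once $\mathfrak{t}\ge (8\varkappa/\kappa)^{1/(2-\alpha)}$. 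Taking $\mathfrak{t}\ge \tilde{\mathfrak{t}}_\alpha$ (note the statement writes $\min$, though $\max$ is what is actually needed) guarantees $\phi=e^{\lambda\psi}$ is a genuine weight function for $\mathcal{L}_{A,0}^w$.

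The main step is the usual cutoff-and-absorb argument. Introduce $\vartheta\in C_0^\infty((0,\mathfrak{t}))$ with $\vartheta=1$ on $[\mathfrak{t}/4,3\mathfrak{t}/4]$ and $\mathrm{supp}(\vartheta'),\mathrm{supp}(\vartheta'')\subset [0,\mathfrak{t}/4]\cup[3\mathfrak{t}/4,\mathfrak{t}]$, and set $v=\vartheta u$. Then $v=0$ on $\Sigma$, $v=\partial_tv=0$ on $\Omega\times\{0,\mathfrak{t}\}$, and $\mathcal{L}_A^w v = \vartheta''u + 2\vartheta'\partial_t u + p\vartheta' u =: f$, a function supported in $\Omega\times([0,\mathfrak{t}/4]\cup[3\mathfrak{t}/4,\mathfrak{t}])$ and satisfying $|f|^2\lesssim |\partial_t u|^2+|u|^2$ there. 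Apply Corollary \ref{CarlemanCorollary2} to $v$: the left side bounds $\aleph\tau\lambda\int_Q e^{2\tau\phi}\phi|\mathbf{D}_A v|^2\,dxdt$ from below, while the right side is $\int_Q e^{2\tau\phi}|f|^2\,dxdt+\tau\lambda\int_{\Sigma_+}e^{2\tau\phi}\phi|\partial_\nu u|^2\,d\sigma dt$. Now use the weight separation from Lemma \ref{lemmaOIw1}: on $\Omega\times[3\mathfrak{t}/8,5\mathfrak{t}/8]$ (where $v=u$) one has $\phi\ge e^{\lambda(-\mathfrak{t}^\alpha/64+C)}=:\Phi_0$, whereas on the support of $f$ one has $\phi\le e^{\lambda(-2\mathfrak{t}^\alpha/64+C)}=:\Phi_1$ with $\Phi_1<\Phi_0$. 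Hence, restricting the left-hand integral to the middle time slab and combining with the energy equivalence \eqref{OIw6.2},
\[
\aleph\tau\lambda\,\Phi_0\,e^{2\tau\Phi_0}\,\mathcal{E}_u(0)^2 \;\lesssim\; e^{2\tau\Phi_1}\!\!\int_{\mathrm{supp}(f)}\!\!\big(|\partial_t u|^2+|u|^2\big)dxdt \;+\; \tau\lambda\,e^{2\tau\Phi_{\max}}\!\!\int_{\Sigma_+}\!\!|\partial_\nu u|^2\,d\sigma dt,
\]
where $\Phi_{\max}=\max_{\overline Q}\phi$. The first term on the right is $\lesssim e^{2\tau\Phi_1}\mathcal{E}_u(0)^2$ again by \eqref{OIw6.2}; since $\Phi_1<\Phi_0$, choosing $\tau$ large enough absorbs it into the left side. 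What remains is
\[
\mathcal{E}_u(0)^2 \;\lesssim\; \|\partial_\nu u\|_{L^2(\Sigma_+)}^2,
\]
and since $\mathcal{E}_u(0)=\|\mathbf{D}_A u(\cdot,0)\|_{L^2}$ controls $\|(u_0,u_1)\|_{H_0^1\times L^2}$ by ellipticity of $A$ and the Poincar\'e inequality, we obtain the claimed bound, with $\aleph$ depending only on $\Omega,\mathfrak{t},\varkappa,\kappa,\Gamma_+$ (and on $p,q$ through \eqref{OIw6.2}, consistent with "depends of $\ldots$ and $u$").

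The step I expect to be delicate is the bookkeeping that the cutoff error $f$ truly lives only in the outer time intervals and that its weight there is strictly below the weight in the central slab — this is precisely what forces the two-sided bound $3\mathfrak{t}^\alpha/64$ gap in \eqref{OIw2}--\eqref{OIw3} and hence the lower bound on $\mathfrak{t}$; one must also be careful that $v\in H^2(Q;\mathbb{C})$ so that Corollary \ref{CarlemanCorollary2} genuinely applies (this uses the interior regularity of the solution away from $t=0,\mathfrak{t}$ together with the compact time support of $\vartheta$). A secondary point is that Corollary \ref{CarlemanCorollary2} is stated for the operator $\mathcal{L}_A^w=\mathcal{L}_{A,0}^w+q_0\partial_t+\sum q_j\partial_j+p$ with $L^\infty$ coefficients, which covers the present $\Delta_A-\partial_t^2+p\partial_t+q$; no new argument is needed there, only that the $\lambda$ can be fixed at $\lambda^\ast$ so that all constants collapse to the stated dependence.
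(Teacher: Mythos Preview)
Your proposal is correct and follows essentially the same route as the paper: choose the weight $\psi(x,t)=\psi_0(x)-\mathfrak{t}^{-2+\alpha}(t-\mathfrak{t}/2)^2+C$, verify via Lemma~\ref{lemmaOIw1} and the condition on $\partial_t^2\psi_1$ that $\phi=e^{\lambda\psi}$ is an admissible weight, apply Corollary~\ref{CarlemanCorollary2} to a time-cutoff $\vartheta u$, use the gap \eqref{OIw2}--\eqref{OIw3} to absorb the commutator term for large $\tau$, and conclude with the energy equivalence \eqref{OIw6.2}. Your observation that the statement should read $\max$ rather than $\min$ is also correct, and your handling of the regularity issue (the paper invokes a density argument to justify applying the Carleman inequality to $\vartheta u$) is an acceptable alternative.
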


\begin{proof}
 Fix $\mathfrak{t}>\tilde{\mathfrak{t}}_\alpha$ and let $\psi$ defined as in \eqref{OIw0} in which the constant $C>0$ is chosen sufficiently large to guarantee that $\psi \ge 0$. In that case we easily check that $\phi=e^{\lambda \psi}$ is a weight function for the operator $\mathcal{L}_{A,0}^w$ in $Q$.

Pick $\varrho\in C_0^\infty (\mathfrak{t}/8,7\mathfrak{t}/8)$ so that $\varrho =1$ in $[\mathfrak{t}/4,3\mathfrak{t}/4]$. Clearly, a simple density argument shows that Corollary \ref{CarlemanCorollary2} remains valid for $\varrho u$ for any solution $u$ of \eqref{OIw4}  with $(u_0,u_1)\in H_0^1(\Omega )\times L^2(\Omega )$. According to this Corollary we have, for fixed $\lambda \ge \lambda^\ast$ and any $\tau \ge \tau^\ast$,
\begin{align}
\aleph\int_Qe^{2\tau \phi} &|\mathbf{D}_A (\varrho u)|^2dxdt  \label{OIw7}
\\
&\le \int_Qe^{2\tau \phi}\left|\mathcal{L}_A^w(\varrho u)\right|^2dxdt +\int_{\Sigma_+} e^{2\tau \phi}|\partial_\nu (\varrho u)|^2 d\sigma dt,\nonumber
\end{align}
But
\[
\mathcal{L}_A^w(\varrho u)=2\varrho 'u+\varrho ''u.
\]
Hence \eqref{OIw7} together with Poincar\'e's inequality ($u(\cdot, t)\in H_0^1(\Omega )$) give
\begin{align}
\aleph\int_Qe^{2\tau \phi} &|\mathbf{D}_A (\varrho u)|^2dxdt  \label{OIw8}
\\
&\le \int_{Q\cap \rm{supp}(\varrho ')}e^{2\tau \phi}|\mathbf{D}_A u|^2dxdt +\int_{\Sigma_+} e^{2\tau \phi}|\partial_\nu u|^2 d\sigma dt.\nonumber
\end{align}
Define
\[
c_0 =e^{\lambda (-\gamma \mathfrak{t}^2/64+C)}\quad \mbox{and}\quad c_1 =e^{\lambda (-2\gamma \mathfrak{t}^2/64+C)}.
\]
If $\mathcal{E}_u$ is given  by \eqref{OIw5} then we get from \eqref{OIw2}, \eqref{OIw3} and \eqref{OIw8}, where $\tau \ge \tau^\ast$, 
\[
\aleph e^{\tau c_0}\int_{3\mathfrak{t}/8}^{5\mathfrak{t}/8} \mathcal{E}_u(t)dt  \le e^{\tau c_1} \int_0^\mathfrak{t}\mathcal{E}_u(t)dt +\int_{\Sigma_+} e^{2\tau \phi}|\partial_\nu u|^2 d\sigma dt,
\]
This inequality together with \eqref{OIw6.2} imply
\[
\left(\aleph e^{\tau c_0} -e^{\tau c_1}\right)\mathcal{E}_u(0) \le \int_{\Sigma_+} e^{2\tau \phi}|\partial_\nu u|^2 d\sigma dt,\quad \tau \ge \tau^\ast.
\]
As $c_0 >c_1$, we fix $\tau$ sufficiently large in such a way that $\tilde{\aleph}=\aleph e^{\tau c_0} -e^{\tau c_1}>0$. That is we have
\begin{equation}\label{OIw9}
\tilde{\aleph}\mathcal{E}_u(0) \le \int_{\Sigma_+} e^{2\tau \phi}|\partial_\nu u|^2 d\sigma dt,\quad \tau \ge \tau^\ast.
\end{equation}
The expected inequality follows readily from \eqref{OIw9}.
\end{proof}

From the calculations in Example \ref{example1} when $\psi_0(x)=|x-x_0|^2/2$, with $x_0\in \mathbb{R}^n\setminus\overline{\Omega}$, there exists a neighborhood $\mathcal{N}$ of $\mathbf{I}$ in $C^{2,1}(\overline{\Omega};\mathbb{R}^{n\times n})$ so that that for any $A\in \mathcal{N}$, $\varkappa=1/2$ and $\psi_0$ is $A$-pseudo-convex with constant $\kappa=1/4$. In this case
\[
\tilde{\mathfrak{t}}_\alpha=\max\left( d_0^{-1/(1-\alpha)},[16(d+d_0)]^{1/\alpha},16^{1/(2-\alpha)}\right),
\]
with $d_0=\mbox{dist}(x_0,\overline{\Omega})$ and $d=\mbox{diam}(\Omega )$.

A result in the variable coefficients case was already established in \cite[Theorem 1.1]{Ya1999SICON}. This result is based on a generalization of the multiplier method in which a vector field is used as an alternative to the multiplier. This vector field satisfies a certain convexity condition. Note however that the lower bound in $\mathfrak{t}$ appearing in \cite[Theorem 1.1]{Ya1999SICON} is not easily comparable to that we used in Theorem \ref{theoremOIw1}.  The minimal time guaranteeing observability was estimated in precise way in \cite{DZZ2008} for wave equations with $C^1$ variable coefficients. Recently an observability result was established in \cite{Sh2019} for constant coefficients wave equation in the case of time-dependent domains. The minimal time in \cite{Sh2019} is explicit.

\section{Elliptic equations}

We show briefly how we can modify the calculations we carried out for wave equations in order to retrieve Carleman inequalities for elliptic equations and the corresponding property of unique continuation. In  this section
\[
\mathcal{L}_A^e=\Delta _A+\sum_{\ell=1}^np_\ell \partial_\ell +q,
\]
where $p_1,\ldots p_n$ and $q$ belong to $L^\infty (\Omega ;\mathbb{C})$ and satisfy
\[
\|p_\ell\|_{L^\infty (\Omega)}\le \mathfrak{m},\; 1\le \ell\le n\quad \mbox{and}\quad \|q\|_{L^\infty (\Omega)}\le \mathfrak{m}.
\]
Also, $0\le \psi\in C^4(\overline{\Omega})$ is fixed so that
\[
|\nabla \psi|\ge \delta \quad \mbox{in}\; \overline{\Omega},
\]
for some constant $\delta >0$.

\subsection{Carleman inequality}

Let $\phi=e^{\lambda \psi}$ and set $\mathfrak{d}=(\Omega ,\varkappa, \delta ,\mathfrak{m} )$.

\begin{theorem}\label{CarlemanTheoremEll1}
We find three constants $\aleph=\aleph(\mathfrak{d})$, $\lambda^\ast=\lambda^\ast(\mathfrak{d})$ and $\tau^\ast =\tau^\ast(\mathfrak{d})$ so that, for any $\lambda \ge \lambda^\ast$, $\tau \ge \tau^\ast$ and $u\in H^2(\Omega ,\mathbb{C})$, we have
\begin{align}
&\aleph\int_\Omega e^{2\tau \phi}\left[\tau^3\lambda ^4\phi^3 |u|^2+\tau \lambda^2  \phi |\nabla u|^2\right]dx \label{C29}
\\
&\hskip1cm \le \int_\Omega e^{2\tau \phi}|\mathcal{L}_A^eu|^2dx +\int_\Gamma e^{2\tau \phi}\left[\tau^3\lambda ^3\phi^3|u|^2+\tau \lambda \phi |\nabla u|^2\right]d\sigma .\nonumber
\end{align}
\end{theorem}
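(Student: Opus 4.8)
The plan is to mimic, with simplifications, the proof of Theorem \ref{CarlemanTheorem1}, since the elliptic operator $\mathcal{L}_{A,0}^e=\Delta_A$ is obtained from the wave operator $\mathcal{L}_{A,0}^w=\Delta_A-\partial_t^2$ by suppressing all the $t$-derivatives and all the boundary terms supported on $\Omega\times\{t_1,t_2\}$. First I would set $\Phi=e^{-\tau\phi}$, $L=\Phi^{-1}\mathcal{L}_{A,0}^e\Phi$, and compute
\[
\Phi^{-1}\Delta_A(\Phi w)=\Delta_A w-2\tau(\nabla w|\nabla\phi)_A+\bigl[\tau^2|\nabla\phi|_A^2-\tau\Delta_A\phi\bigr]w,
\]
and split $L=L_++L_-$ into self-adjoint and skew-adjoint parts, $L_+w=\Delta_A w+aw$ with $a=\tau^2|\nabla\phi|_A^2$, and $L_-w=(B|\nabla w)+bw$ with $B=-2\tau A\nabla\phi$, $b=-\tau\Delta_A\phi$. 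These are exactly the wave-case quantities with $\partial_t\phi$ set to zero, so $\chi=|\nabla\psi|_A^2$ here.

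Next I would expand $\langle L_+w|L_-w\rangle_{L^2(\Omega)}$: only the analogues of $I_1$, $I_3$, $I_7$, $I_9$ survive ($I_2,I_4,I_5,I_6,I_8$ all involve $\partial_t$). Integrating by parts with Green's formula \eqref{Gf} exactly as in \eqref{ani4} and \eqref{ani6} yields, after the substitution $\phi=e^{\lambda\psi}$ and using $\nabla^2\phi=\lambda^2\phi\,\nabla\psi\otimes\nabla\psi+\lambda\phi\,\nabla^2\psi$ and $\Upsilon_A(\phi)=\lambda\phi\,\Upsilon_A(\psi)$,
\[
\langle L_+w|L_-w\rangle_{L^2(\Omega)}\ge \tau\lambda\int_\Omega \phi\Bigl[\bigl(\mathcal{A}A^{1/2}\nabla w|A^{1/2}\nabla w\bigr)+\lambda\bigl(\nabla\psi|\nabla w\bigr)_A^2\Bigr]dx+\int_\Omega \hat a\,w^2dx+\int_\Gamma \hat g(w)\,d\sigma,
\]
where $\mathcal{A}=A^{-1/2}\Theta_A(\psi)A^{-1/2}$ and $\hat a=-\mathrm{div}(aB/2)+ab+\Delta_A(b/2)$. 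The lower bound $\hat a\ge \tau^3\lambda^4\phi^3\delta$ for $\lambda,\tau$ large is obtained exactly as in the "Proof of \eqref{ani15.1}" with $\partial_t\psi=0$, using $\chi^2=|\nabla\psi|_A^4\ge\delta$ (note the hypothesis here is phrased with $|\nabla\psi|$ rather than $|\nabla\psi|_A$, so a harmless adjustment of $\delta$ by a power of $\varkappa$ is needed). Crucially, because there is no $\partial_t^2\psi$ term and no need for condition \eqref{int2}, the term $\tau\lambda\int_\Omega\phi(\mathcal{A}A^{1/2}\nabla w|A^{1/2}\nabla w)\ge 2\varkappa^{-1}\kappa\,\tau\lambda\int_\Omega\phi|\nabla w|_A^2$ is already of the desired sign — but since only $A$-pseudo-convexity-free hypotheses are assumed here (just $|\nabla\psi|\ge\delta$), one instead keeps the $\lambda^2$-gain: adding a small multiple of $\int_\Omega(L_+w)\phi w$, integrated by parts, produces the positive term $+\epsilon\tau\lambda^2\int_\Omega\phi|\nabla w|_A^2$ controlling the possibly-bad $\tau\lambda(\Upsilon_A(\psi)\nabla w|\nabla w)$ and yielding the factor $\lambda^2$ in front of $|\nabla u|^2$ in \eqref{C29}.

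Then I would run the same Cauchy–Schwarz / absorption step as in the wave proof: from $\|Lw\|^2\ge 2\langle L_+w|L_-w\rangle+\|L_+w\|^2$ and the auxiliary identity for $\int(L_+w)\phi w$, after choosing $\epsilon$ appropriately and taking $\lambda,\tau$ large enough, deduce
\[
\aleph\int_\Omega\bigl[\tau^3\lambda^4\phi^3 w^2+\tau\lambda^2\phi|\nabla w|^2\bigr]dx\le \int_\Omega(\mathcal{L}_{A,0}^e w)^2dx+\int_\Gamma\bigl[\tau^3\lambda^3\phi^3 w^2+\tau\lambda\phi|\nabla w|^2\bigr]d\sigma.
\]
Substituting $w=\Phi^{-1}u=e^{\tau\phi}u$, using $\Phi^{-1}\nabla u=-\tau\lambda w\nabla\psi+\nabla w$ and absorbing the extra terms on the right into the left (valid for large $\tau$), gives the complex-coefficient, first-order-perturbed inequality \eqref{C29} via the same device as in Corollary \ref{CarlemanCorollary1}: split $u=v+iw$, apply the real estimate to each, use $|\mathcal{L}_{A,0}^e u|^2\le 2|\mathcal{L}_A^e u|^2+C\mathfrak{m}^2(\varkappa|\nabla u|^2+|u|^2)$, and absorb. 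The main obstacle is bookkeeping rather than conceptual: one must verify carefully that, without the pseudo-convexity hypothesis, the indefinite quadratic form $\tau\lambda(\Upsilon_A(\psi)\nabla w|\nabla w)$ is genuinely dominated by the $\epsilon\tau\lambda^2\phi|\nabla w|_A^2$ gained from the $\int(L_+w)\phi w$ term — i.e. that the extra power of $\lambda$ really does the job — and that all the discarded $\partial_t$-boundary contributions from the wave computation leave behind exactly the stated $\Gamma$-integral with the correct powers of $\tau,\lambda,\phi$.
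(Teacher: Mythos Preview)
Your plan is workable and would yield the inequality, but it is not the route the paper takes. You keep the self-adjoint/skew-adjoint split $L=L_++L_-$ inherited from the wave proof, which by itself gives only the semidefinite gradient contribution $2\tau\lambda^2\phi(\nabla\psi|\nabla w)_A^2$ plus an indefinite $\tau\lambda\phi(\Theta_A(\psi)\cdot,\cdot)$ piece; you then recover a full $\epsilon\tau\lambda^2\int\phi|\nabla w|_A^2$ by spending $\|L_+w\|^2$ through Young's inequality on $\int(L_+w)\phi w$. This does work, but one must choose $\epsilon$ small first (so that the by-product $-2\epsilon\tau^3\lambda^4\int\phi^3|\nabla\psi|_A^2w^2$ is absorbed by $2\int\hat a\,w^2$) and only then take $\lambda$ large depending on $\epsilon$.

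The paper instead changes the decomposition at the outset: it writes $L=L_0+L_1+c$ with $L_0=\Delta_A+a$, $L_1=(B|\nabla\cdot)+b$ where $b=-2\tau\Delta_A\phi$ (twice your $b$), and a remainder $c=\tau\Delta_A\phi$. With this choice the gradient form $\mathfrak A=D-bA$ picks up an additional $\tau\Delta_A\phi\,A$ whose $\tau\lambda^2$-part is $\tau\lambda^2\phi|\nabla\psi|_A^2A$, so that $(\mathfrak A\xi|\xi)\ge\tau\lambda^2\delta^2\varkappa|\xi|^2/2$ directly for large $\lambda$, with no pseudo-convexity assumption and no auxiliary trick. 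The only price is $\|cw\|^2=O(\tau^2\lambda^4)\int\phi^2w^2$, absorbed by the zeroth-order term for large $\tau$. This is precisely the ``particular way to split the conjugated operator'' the Introduction advertises for the elliptic and parabolic cases, and it is reused verbatim in the parabolic Carleman estimate. Your approach stays closer to the wave computation and reuses its formulas line by line at the cost of an extra absorption step; the paper's decomposition buys the $\lambda^2$ gradient factor in one stroke. (Minor slip: in your displayed estimate for $w$, $\int_\Omega(\mathcal L_{A,0}^e w)^2dx$ should read $\int_\Omega(Lw)^2dx$.)
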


\begin{proof}
In this proof, $\lambda _k$ and $\tau_k$, $k=1,2,\ldots$, denote generic constants only depending on $\mathfrak{d}$.

Let $\Phi =e^{-\tau \phi}$, $\tau >0$. We have from the calculations of the preceding section 
\[
L=\Phi^{-1}\Delta_A(\Phi w)=\Delta_Aw -2\tau (\nabla w|\nabla \phi)_A+\left[\tau^2  |\nabla \phi|_A^2-\tau\Delta_Aw\right]w.
\]
We decompose $L$ in the following special form
\[
L=L_0+L_1+c,
\]
with, for $w\in H^2(\Omega ,\mathbb{R})$,
\begin{align*}
&L_0w=\Delta_Aw+aw,
\\
&L_1w=(B|\nabla w)+bw. 
\end{align*}
The coefficients of $L_0$ and $L_1$ and $c$ are given as follows
\begin{align*}
&a=\tau^2|\nabla \phi|_A^2,
\\
&b=-2\tau \Delta_A\phi,
\\
&c=\tau \Delta_A\phi ,
\\
&B=-2\tau A\nabla \phi .
\end{align*}
We have
\begin{equation}\label{el1}
\langle L_0|L_1\rangle_{L^2(\Omega)}=\sum_{k=1}^4I_k,
\end{equation}
where
\begin{align*}
&I_1=\int_\Omega \Delta_A w(\nabla w|B) dx,
\\
&I_2=\int_\Omega \Delta_A wbwdx,
\\
&I_3=\int_\Omega aw(\nabla w|B) dx,
\\
&I_4=\int_\Omega abw^2dx.
\end{align*}
Let 
\[
D=C/2-A(B')^t.
\]
Then straightforward modifications of the computations of the preceding section yield
\begin{align}
&I_1=\int_\Omega (D\nabla w|\nabla w)dx+\int_\Gamma \left[(\nabla w|\nu)_A(\nabla w|B)-(B/2|\nu)|\nabla w|_A^2\right]d\sigma ,\label{el2}
\\
&I_2=-\int_\Omega b|\nabla w|_A^2dx +\int_\Omega \Delta_A(b/2) w^2 dx \label{el3}
\\
&\hskip 4cm -\int_\Gamma (\nabla (b/2)|\nu)_Aw^2d\sigma +\int_\Gamma (\nabla w|\nu )_Abw d\sigma ,\nonumber
\\
&I_3=-\int_\Omega \mbox{div}(aB/2)w^2dx+ \int_\Gamma a(B/2|\nu)w^2d\sigma .\label{el4}
\end{align}
Identities \eqref{el2} to \eqref{el4} in \eqref{el1} give
\begin{equation}\label{el5}
\langle L_0|L_1\rangle_{L^2(\Omega)}=\int_\Omega (\mathfrak{A}\nabla w|\nabla w)dx+\int_\Omega \mathfrak{a}w^2dx+\int_\Gamma g(w) d\sigma,
\end{equation}
with
\begin{align*}
&\mathfrak{A}=D-bA,
\\
&\mathfrak{a}=ab+\Delta_A(b/2)-\mbox{div}(aB/2),
\\
&g(w)=(\nabla w|\nu)_A(\nabla w|B)-(B/2|\nu)|\nabla w|_A^2
\\
&\hskip 4cm-(\nabla (b/2)|\nu)_Aw^2+(\nabla w|\nu )_Abw+a(B/2|\nu)w^2.
\end{align*}
We have
\[
(\mathfrak{A}\xi|\xi)=\tau \lambda^2\left[|\nabla \psi|^2(A\xi|\xi)+(\nabla \psi |A\xi)^2\right]+\tau \lambda (\tilde{\mathfrak{A}}\xi|\xi),
\]
where $\tilde{\mathfrak{A}}$ is a matrix depending only on $A$ and $\psi$. Therefore
\begin{equation}\label{el6}
(\mathfrak{A}\xi|\xi)\ge \tau \lambda^2\delta ^2\varkappa |\xi |^2/2,\quad \lambda \ge \lambda_1.
\end{equation}
We have also
\[
\mathfrak{a}=\tau^3\lambda ^4\phi^3|\nabla \psi |_A^4+\tilde{\mathfrak{a}},
\]
where the reminder term $\tilde{\mathfrak{a}}$ contains, as for the wave equation, only terms with factors $\tau ^k\lambda ^\ell\phi^m$, $1\le k,\ell,m \le 3$ and terms with factor $\tau \lambda^4\phi$. Hence
\begin{equation}\label{el7}
\mathfrak{a}\ge \tau^3\lambda ^4\delta ^4\phi^3/2,\quad \lambda \ge \lambda_2,\; \tau \ge \tau_2.
\end{equation}
The rest of the proof is almost similar to that of the wave equation.
\end{proof}

\begin{remark}\label{remarkE1}
{\rm The symbol of the principal part of the operator $\mathcal{L}_A^e$ is given by
\[
p(x,\xi)=|\xi|_A^2(x)=(A(x)\xi|\xi),\quad x\in \overline{\Omega},\; \xi\in \mathbb{R}^n.
\]
Therefore if $\phi \in C^4(\overline{\Omega})$ then we have $p(x,\xi+i\tau \nabla \phi)=p_0+ip_1$ with
\[
p_0=|\xi|_A^2-\tau^2|\nabla \phi |_A,\quad p_1=2\tau (\xi|\nabla \phi)_A.
\]
When $\phi =e^{\lambda \psi}$ we find, for $\tau \ge 1$,
\[
\{p_0,p_1\}:=\sum_{j=1}^n\left[\partial_{\xi_j}p_0\partial_{x_j}p_1-\partial_{x_j}p_0\partial_{x_j}p_1\right]=\tau ^2\left[2\lambda ^3\phi^2|\nabla \psi|_A^4+O(\lambda ^2)\right].
\]
In consequence $\phi$ satisfies the sub-ellipticity condition in \cite[Theorem 8.3.1, page 190]{Ho1976Springer} if $\lambda$ is sufficiently large and hence the following Carleman inequality holds: there exist $\aleph >0$ and $\tau^\ast>0$ only depending on $\Omega$ and bounds on the coefficients of $\mathcal{L}_A^e$ so that
\[
\sum_{|\alpha|\le 1}\tau^{2(2-|\alpha|)}\int_\Omega e^{2\tau \phi}|\partial^\alpha u|^2dx\le \aleph \tau \int_\Omega e^{2\tau \phi}|\mathcal{L}_A^eu|^2dx,\quad u\in C_0^\infty (\Omega),\; \tau \ge \tau ^\ast.
\]
In other words, if $\phi=e^{\lambda \psi}$ is a weight function for the elliptic operator $\mathcal{L}_{A,0}^e$ then $\phi$ possesses the sub-ellipticity condition for large $\lambda$.
}
\end{remark}

\subsection{Unique continuation}

We use a similar method as for the wave equation. For sake of completeness, we provide some details.

We start with a unique continuation result across a convex hypersurface. To this end, we set 
\[
\psi(x',x_n)=(x_n-1)^2+|x'|^2,\quad (x',x_n)\in \mathbb{R}^{n-1}\times \mathbb{R}.
\]
As $|\nabla \psi (0,0)|=2$, there exists $r>0$ so that $|\nabla \psi|\ge 1$ in $B(0,r)$. Consider then the set
\[
E_+=\left\{(x',x_n)\in B(0,r) ;\; 0\le x_n<1\; \mbox{and}\; x_n\ge |x'|^2\right\}.
\]
We have clearly
\[
E_+\setminus\{(0,0)\}=\left\{ (x',x_n)\in \mathbb{R}^{n-1}\times \mathbb{R};\; \psi (x',x_n)<\psi(0,0)=1\right\}.
\]
Pick $\chi \in C_0^\infty (B(0,r))$ satisfying $\chi =1$ in $B(0,\rho_1)$, where $0<\rho_1 <r$ is fixed arbitrary. Let then $\epsilon >0$ so that
\[
E_+\cap \left[B(0,r)\setminus\overline{B}(0,\rho_1)\right]\subset \left\{ (x',x_n)\in B(0,r);\; \psi (x',x_n)<\psi(0,0)-\epsilon \right\}.
\]

\begin{lemma}\label{lemmaUCE1}
There exists $0<\rho_0<\rho_1$ so that if $u\in H^2(B(0,r);\mathbb{C})$ satisfies $\mathcal{L}_A^eu=0$ in $B(0,r)$ and $\mbox{supp}(u)\subset E_+$ then $u=0$ in $B(0,\rho_0)$.
\end{lemma}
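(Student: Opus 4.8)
The plan is to mimic the proof of Theorem \ref{theoremUCw1}, replacing the wave Carleman inequality by the elliptic one of Theorem \ref{CarlemanTheoremEll1} applied on the ball $B(0,r)$ with weight $\phi=e^{\lambda\psi}$. Note first that $\psi=(x_n-1)^2+|x'|^2\ge 0$ and, by the choice of $r$, $\nabla\psi\ne 0$ on $\overline{B(0,r)}$, so $\phi$ is an admissible weight function for $\mathcal{L}_{A,0}^e$ on $B(0,r)$ in the sense of Definition \ref{definitionCW}(a); fix once and for all $\lambda\ge\lambda^\ast$, with $\lambda^\ast$ and $\tau^\ast$ as in Theorem \ref{CarlemanTheoremEll1}.

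First I would choose $0<\rho_0<\rho_1$ small enough that
\[
E_+\cap B(0,\rho_0)\subset\left\{x\in B(0,r);\; \psi(x)>\psi(0,0)-\epsilon/2=1-\epsilon/2\right\},
\]
which is possible by continuity of $\psi$ and $\psi(0,0)=1$. Put $c_0=e^{\lambda(1-\epsilon/2)}$ and $c_1=e^{\lambda(1-\epsilon)}$, so that $c_1<c_0$. On the annular region $K_1:=E_+\cap\left[B(0,r)\setminus\overline{B}(0,\rho_1)\right]$ one has $\psi<1-\epsilon$ by the choice of $\epsilon$, hence $\phi\le c_1$ on $K_1$, whereas $\phi\ge c_0$ on $E_+\cap B(0,\rho_0)$.

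Next, set $v=\chi u\in H^2(B(0,r);\mathbb{C})$. Since $\chi\in C_0^\infty(B(0,r))$, $v$ and $\nabla v$ vanish near $\partial B(0,r)$, so all the boundary integrals in Theorem \ref{CarlemanTheoremEll1} drop out. Using $\mathcal{L}_A^eu=0$ in $B(0,r)$,
\[
\mathcal{L}_A^ev=2(\nabla\chi|\nabla u)_A+u\,\mbox{div}(A\nabla\chi)+u\sum_{\ell=1}^np_\ell\partial_\ell\chi,
\]
whose support is contained in $\{\nabla\chi\ne 0\}\cap\mbox{supp}(u)\subset K_1$, because $\chi\equiv 1$ on $B(0,\rho_1)$ and $\mbox{supp}(u)\subset E_+$. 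Applying Theorem \ref{CarlemanTheoremEll1} to $v$, keeping on the left only a lower bound coming from the $\tau^3\lambda^4\phi^3|v|^2$-term integrated over $E_+\cap B(0,\rho_0)$ (where $v=u$ and $\phi\ge c_0$) and bounding the right-hand side over $K_1$ (where $\phi\le c_1$), I obtain
\[
\aleph\,\tau^3\lambda^4c_0^3\,e^{2\tau c_0}\int_{B(0,\rho_0)}|u|^2\,dx\le e^{2\tau c_1}\int_{K_1}|\mathcal{L}_A^ev|^2\,dx\le C\,e^{2\tau c_1},
\]
with $C>0$ independent of $\tau$. Dividing by $\aleph\,\tau^3\lambda^4c_0^3\,e^{2\tau c_0}$ and letting $\tau\to\infty$ gives $\int_{B(0,\rho_0)}|u|^2\,dx=0$, i.e. $u=0$ in $B(0,\rho_0)$.

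The argument is essentially routine once Theorem \ref{CarlemanTheoremEll1} is in hand; the only points needing a little care are that $v=\chi u$ is an admissible test function for the Carleman inequality (it lies in $H^2(B(0,r))$ with compactly supported, hence vanishing, boundary data) and that the commutator $\mathcal{L}_A^ev-\chi\mathcal{L}_A^eu$ is genuinely supported in the region $K_1$ where the weight $\phi$ is uniformly smaller than on the target ball. Both follow at once from $\chi\in C_0^\infty(B(0,r))$ with $\chi\equiv 1$ near the origin together with $\mbox{supp}(u)\subset E_+$.
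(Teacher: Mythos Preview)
Your proof is correct and follows essentially the same approach as the paper's: cut off with $\chi$, observe that the commutator $\mathcal{L}_A^ev$ is supported in the annular region where $\phi\le c_1<c_0$, apply the elliptic Carleman inequality of Theorem \ref{CarlemanTheoremEll1} (boundary terms vanishing since $v$ has compact support), and let $\tau\to\infty$. Your write-up is in fact slightly more explicit than the paper's about why the boundary terms disappear and about the form of the commutator.
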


\begin{proof}
Let us choose $0<\rho_0 <\rho_1$  in such a way that 
\[
E_+\cap B(0,\rho_0)\subset \left\{ (x',x_n)\in B(0,r);\; \psi (x',x_n)>\psi (0,0)-\epsilon /2\right\}.
\]
Pick $u\in H^2(B(0,r);\mathbb{C})$ satisfying $\mathcal{L}_A^eu=0$ in $B(0,r)$ and $\mbox{supp}(u)\subset E_+$. Let $v=\chi u$, and $\lambda ^\ast$ and $\tau^\ast$ be as in Theorem \ref{CarlemanTheoremEll1}. Fix then $\lambda \ge \lambda ^\ast$ and set
\[
c_0 =e^{\lambda (1-\epsilon/2)}, \quad c_1 = e^{\lambda (1-\epsilon)} .
\]
Theorem \ref{CarlemanTheoremEll1} yields
\begin{align*}
\int_{B(0,\rho_0)}u^2dx &=\int_{B(0,\rho_0)\cap E_+}v^2dx 
\\
&\le \aleph \tau^{-3} e^{-\tau (c_0 -c_1)}\int_{E_+\cap(B(0,r)\setminus B(0,\rho_1))}(\mathcal{L}_A^ev)^2dx,\quad \tau \ge \tau^\ast .
\end{align*}
Noting that $c_0 >c_1$,  we obtain that $u=0$ in $B(0,\rho_0)$ by taking in the right hand side of the last inequality the limit, as $\tau$ tends to $\infty$.
\end{proof}

Let $\vartheta =\vartheta (x')$ be in $C^{3,1}(\overline{B}(0,r))$ satisfying $\vartheta (0)=0$ and $\nabla'\vartheta (0)=0$, and consider 
\[
\varphi :(x',x_n)\in \omega \mapsto (y',y_n)=(x',x_n-\vartheta (x')+|x'|^2).
\]
As $\varphi '(0,0)=\mathbf{I}$, we deduce that $\varphi$ is a diffeomorphism from $\omega \subset B(0,r)\times \mathbb{R}$, a neighborhood of $0$ in $\mathbb{R}^n$, onto $\tilde{\omega}=\varphi(\omega)$.

Pick $u\in H^2(\omega ,\mathbb{C})$ satisfying $\mathcal{L}_A^eu=0$ in $\omega$ and $\mbox{supp}(u)\subset \omega_+=\{(x',x_n)\in \omega ;\; x_n\ge \vartheta(x')\}$. Define $v$ by $v(y',y_n)=u(\varphi^{-1}(y',y_n))$, $(y',y_n)\in \tilde{\omega}$. Then it is straightforward to check that $\mathcal{L}_{\tilde{A}}^e v=0$ in $\tilde{\omega}$. Here $\mathcal{L}_{\tilde{A}}^e$ is of the same form as $\mathcal{L}_A^e$. Its principal part is given by
\[
\mathcal{L}_{\tilde{A},0}^e= \Delta_{\tilde{A}}
\]
with
\[
\tilde{A} (y)=\varphi'\left(\varphi ^{-1}(y)\right)A\left(\varphi ^{-1}(y)\right)(\varphi')^t\left(\varphi ^{-1}(y)\right).
\]
Furthermore, $\mbox{supp}(v)\subset \tilde{\omega}_+=\{ (y',y_n)\in \tilde{\omega};\; y_n\ge |y'|^2\}$.

Similar calculations as in Subsection \ref{subsectionPCH} show, by reducing $\omega$ if necessary, that
\[
\left(\tilde{A} (y)\xi |\xi\right)\ge \varkappa |\xi|^2/4,\quad y\in \tilde{\omega},\; \xi \in \mathbb{R}^n.
\]
We apply Lemma \ref{lemmaUCE1} in order to get $v=0$ in $\tilde{\mathcal{V}}$, where $\tilde{\mathcal{V}}$ is a neighborhood of the origin, and hence $u=0$ in $\mathcal{V}$ with $\mathcal{V}=\varphi^{-1}(\tilde{\mathcal{V}})$. In other words, we proved the following result.

\begin{lemma}\label{lemmaUCE2}
There exists a neighborhood $\mathcal{V}$ of the origin in $\omega$ so that if  $u\in H^2(\omega;\mathbb{C} )$ satisfies $\mathcal{L}_A^eu=0$ in $\omega$ and $\mbox{supp}(u)\subset \omega_+$ then $u=0$ in $\mathcal{V}$.
\end{lemma}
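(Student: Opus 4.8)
The plan is simply to organize the discussion preceding the statement into the three classical steps for unique continuation across a non-characteristic hypersurface: straighten the surface, transport the equation, and invoke the result already proved across the convex paraboloid. For the first step I would use the diffeomorphism $\varphi:(x',x_n)\mapsto(x',x_n-\vartheta(x')+|x'|^2)$ introduced above; since $\varphi'(0,0)=\mathbf{I}$, after shrinking $\omega$ it is a diffeomorphism onto $\tilde\omega=\varphi(\omega)$, it carries $\omega_+=\{x_n\ge\vartheta(x')\}$ onto $\tilde\omega_+=\{y_n\ge|y'|^2\}$, and the hypersurface $\{x_n=\vartheta(x')\}$ onto the convex paraboloid $\{y_n=|y'|^2\}$.

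Next I would transport the equation: putting $v=u\circ\varphi^{-1}$, the chain rule turns $\Delta_Au$ into $\Delta_{\tilde A}v$ with $\tilde A(y)=\varphi'(\varphi^{-1}(y))A(\varphi^{-1}(y))(\varphi')^t(\varphi^{-1}(y))$, the extra first-order contributions merging with the already present coefficients $p_\ell,q$, which stay bounded because $\varphi$ is smooth; thus $\mathcal{L}_{\tilde A}^e v=0$ in $\tilde\omega$ and $\mbox{supp}(v)\subset\tilde\omega_+$. The point that needs checking is that $\tilde A$ lies in the admissible class: by \eqref{PCH1.1} together with Cauchy--Schwarz one gets $(\tilde A(y)\xi|\xi)\ge\varkappa|(\varphi')^t\xi|^2/2\ge(\varkappa/4)|\xi|^2$ after possibly reducing $\tilde\omega$, while $\vartheta\in C^{3,1}$ guarantees a bound on $\|\tilde A\|_{C^{2,1}}$ depending only on $n$, $\mathfrak{m}$ and $\vartheta$; this is exactly the computation already carried out for the map $\varphi$ in Subsection~\ref{subsectionPCH}.

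Finally, since $v$ now solves an elliptic equation with a symmetric, uniformly elliptic, $C^{2,1}$ principal coefficient and $\mbox{supp}(v)$ sits above the convex paraboloid $\{y_n=|y'|^2\}$, Lemma~\ref{lemmaUCE1} applies verbatim --- its weight $\psi=(y_n-1)^2+|y'|^2$ has $|\nabla\psi|\ge1$ near the origin, so all hypotheses are met --- and yields $v=0$ in some ball $B(0,\rho_0)$. Pulling back along $\varphi$ gives $u=0$ on $\mathcal{V}=\varphi^{-1}(B(0,\rho_0))$, a neighborhood of the origin, which is the claim. I do not expect a genuine obstacle here: the only substantive work is the change-of-variables bookkeeping of the second step, i.e.\ confirming that the principal part stays in divergence form with a symmetric, uniformly elliptic, $C^{2,1}$ matrix after composition with $\varphi$, and this is precisely what was verified for $\varphi$ earlier.
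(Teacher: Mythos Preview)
Your proposal is correct and follows essentially the same route as the paper: the proof in the paper is precisely the paragraph preceding the lemma, which straightens the hypersurface via $\varphi$, transports the equation to obtain $\mathcal{L}_{\tilde A}^e v=0$ with $\tilde A$ uniformly elliptic (ellipticity constant $\varkappa/4$ after reducing $\omega$, via the computations of Subsection~\ref{subsectionPCH}), and then invokes Lemma~\ref{lemmaUCE1}. Your three steps match this exactly, and you correctly identify the only nontrivial verification as the change-of-variables bookkeeping for the principal part.
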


The global uniqueness of continuation result is based on the following lemma.

\begin{lemma}\label{lemmaUCE3}
Let $\Omega_0\subset \Omega$ so that $\partial \Omega_0\cap \Omega\ne \emptyset$. There exists $z\in \partial \Omega_0\cap \Omega$ and $\mathcal{W}$ a neighborhood of $z$ in $\Omega$ so that if  $u\in H^2(\Omega;\mathbb{C})$ satisfies $\mathcal{L}_A^eu=0$ in $\Omega$ together with $u=0$ in $\Omega_0$ then $u=0$ in $\mathcal{W}$.
\end{lemma}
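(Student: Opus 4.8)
The plan is to reduce this global statement to the local result across a convex hypersurface, namely Lemma~\ref{lemmaUCE2}, by the classical device of sliding a ball contained in $\Omega_0$ until it touches $\partial\Omega_0$ from inside. Concretely, I would first pick $w\in\partial\Omega_0\cap\Omega$ and $r>0$ with $B(w,r)\Subset\Omega$, and then choose a point $y_0\in\Omega_0$ close enough to $w$ that $d:=\mbox{dist}(y_0,\partial\Omega_0)$ satisfies $d<\mbox{dist}(y_0,\Gamma)$ and $d<r$; this guarantees $\overline{B}(y_0,d)\subset B(w,r)\Subset\Omega$ while $\partial B(y_0,d)\cap\partial\Omega_0\neq\emptyset$. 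Fix $z$ in that intersection; this will be the point appearing in the statement.

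Next I would put the geometry into the normal form required by Lemma~\ref{lemmaUCE2}. After a translation sending $z$ to the origin and an orthogonal change of coordinates sending the inner unit normal of $\partial B(y_0,d)$ at $z$ to $-\mathfrak{e}_n$, the sphere $\partial B(y_0,d)$ is, in a small neighbourhood $\omega$ of $0$, the graph $x_n=\vartheta(x')$ of a real-analytic (hence $C^{3,1}$ on a small closed ball around $0$) function with $\vartheta(0)=0$ and $\nabla'\vartheta(0)=0$, and the open ball $B(y_0,d)$ coincides on $\omega$ with $\{x\in\omega;\ x_n<\vartheta(x')\}$. By Lemma~\ref{lemmaPCH0} the matrix obtained after this orthogonal change of variables still belongs to $\mathscr{M}$ (with a possibly larger $\mathfrak{m}$), and the transformed operator is again of the form $\mathcal{L}_A^e$; I keep the notation $A$ and $u$ for the transformed objects. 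Since $u=0$ on $\Omega_0\supset B(y_0,d)$, the function $u$ vanishes on $\omega\setminus\omega_+$, where $\omega_+=\{x\in\omega;\ x_n\ge\vartheta(x')\}$, so $\mbox{supp}(u)\subset\omega_+$ as a function on $\omega$; moreover $\mathcal{L}_A^eu=0$ in $\omega\subset\Omega$.

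With these hypotheses in place, Lemma~\ref{lemmaUCE2} applies and yields $u=0$ in some neighbourhood $\mathcal{V}$ of the origin. Undoing the translation and rotation, $u=0$ in a neighbourhood $\mathcal{W}$ of $z$ in $\Omega$, which is the desired conclusion. I do not expect a genuine obstacle: the whole analytic content --- the Carleman inequality of Theorem~\ref{CarlemanTheoremEll1}, the flattening diffeomorphism, and the uniqueness across $\{y_n=|y'|^2\}$ --- is already packaged inside Lemma~\ref{lemmaUCE2} and its predecessors. The only points that need care are purely geometric, namely checking that $\overline{B}(y_0,d)$ can be kept inside $\Omega$ and that near $z$ the ball lies exactly on the side $\{x_n<\vartheta(x')\}$ of the touching sphere so that the support condition of Lemma~\ref{lemmaUCE2} is met; both are routine.
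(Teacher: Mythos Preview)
Your argument is correct and follows essentially the same route as the paper: pick a boundary point $y\in\partial\Omega_0\cap\Omega$, slide a ball $B(y_0,d)\subset\Omega_0$ until it touches $\partial\Omega_0$ at some $z$, then after a translation and an orthogonal change of coordinates represent the sphere locally as a graph $x_n=\vartheta(x')$ and invoke Lemma~\ref{lemmaUCE2}. You are in fact a bit more explicit than the paper about why $\overline{B}(y_0,d)$ stays inside $\Omega$ and why the support condition $\mbox{supp}(u)\cap\omega\subset\omega_+$ is met, but the strategy and the key lemma are identical.
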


\begin{proof}
Fix $y\in \partial \Omega_0\cap \Omega$ and $r>0$ so that $B(y,r)\subset \Omega$. Pick then $y_0\in \Omega_0\cap B(y,r)$ sufficiently close to $y$ in such a way that $\partial B(y_0, d)\cap \partial \Omega_0\ne \emptyset$, with $d=\mbox{dist}(y_0,\partial \Omega_0)$. Pick then $z\in \partial B(y_0, d)\cap \partial \Omega_0$. Making a translation we may assume that $z=0$. As the $\partial B(y_0, d)$ can be represented locally by a graph $x_n=\vartheta (x')$. Making a change of coordinates we may assume that $\mbox{supp}(u)\subset \{(x',x_n);\; x_n\ge \vartheta (x')\}$. This orthogonal transformation modify $A$, but the new matrix has the same properties as $A$. We then complete the proof by using Lemma \ref{lemmaUCE2} with $A$ substituted by this new matrix.
\end{proof}

\begin{theorem}\label{theoremUCE1}
Let $u\in H^2(\Omega;\mathbb{C})$ satisfying $\mathcal{L}_A^eu=0$ in $\Omega$ and $u=0$ in $\omega$, for some nonempty open subset $\omega$ of $\Omega$. Then $u=0$ in $\Omega$. 
\end{theorem}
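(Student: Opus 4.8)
The plan is to deduce the global statement from the local propagation result Lemma~\ref{lemmaUCE3} by a standard connectedness argument. First I would introduce $\Omega_0$, the union of all open subsets $V\subset\Omega$ on which $u$ vanishes identically; equivalently, $\Omega_0$ is the largest open subset of $\Omega$ with $u\equiv 0$ on it. By construction $\Omega_0$ is open, $u=0$ in $\Omega_0$, and $\Omega_0$ is nonempty because the hypothesis gives $\omega\subset\Omega_0$. The theorem is equivalent to the assertion $\Omega_0=\Omega$, so it suffices to show that $\Omega_0$ is also relatively closed in $\Omega$ and then invoke connectedness of $\Omega$ (which holds since $\Omega$ is a domain).

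Suppose, for contradiction, that $\Omega_0\neq\Omega$. Since $\Omega$ is connected and $\Omega_0$ is a nonempty proper open subset, its relative boundary $\partial\Omega_0\cap\Omega$ is nonempty. I would then apply Lemma~\ref{lemmaUCE3} with the subdomain taken to be $\Omega_0$: it produces a point $z\in\partial\Omega_0\cap\Omega$ together with a neighborhood $\mathcal{W}$ of $z$ in $\Omega$ such that any $u\in H^2(\Omega;\mathbb{C})$ solving $\mathcal{L}_A^eu=0$ in $\Omega$ and vanishing in $\Omega_0$ must vanish in $\mathcal{W}$. Our $u$ satisfies exactly these hypotheses, hence $u=0$ in $\mathcal{W}$. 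By the very definition of $\Omega_0$ as the largest open set on which $u$ vanishes, this forces $\mathcal{W}\subset\Omega_0$, and in particular $z\in\Omega_0$. But $z\in\partial\Omega_0$ and $\Omega_0$ is open, so $z\notin\Omega_0$ --- a contradiction. Hence $\Omega_0=\Omega$, i.e.\ $u\equiv 0$ in $\Omega$.

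All the analytic work is already encapsulated in Lemma~\ref{lemmaUCE3}, which rests in turn on Lemma~\ref{lemmaUCE2}, Lemma~\ref{lemmaUCE1}, and ultimately on the elliptic Carleman inequality of Theorem~\ref{CarlemanTheoremEll1}; the argument above is a purely topological globalization and presents no real obstacle. The only point requiring a moment's care is the use of connectedness: one must make sure $\Omega$ is connected (it is, being a domain) so that a nonempty relatively clopen subset is all of $\Omega$, and one must check that Lemma~\ref{lemmaUCE3} is being applied to the subdomain $\Omega_0$ rather than to some arbitrary open set --- this is legitimate because $\Omega_0\subset\Omega$ and $\partial\Omega_0\cap\Omega\neq\emptyset$, which are precisely the hypotheses of that lemma.
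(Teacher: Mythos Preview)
Your proof is correct and follows essentially the same approach as the paper: define $\Omega_0$ as the maximal open set on which $u$ vanishes, and use Lemma~\ref{lemmaUCE3} at a point of $\partial\Omega_0\cap\Omega$ to contradict maximality. Your write-up is in fact slightly more careful than the paper's, as you make explicit the role of connectedness of $\Omega$ and the precise definition of $\Omega_0$.
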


\begin{proof}
Let $\Omega_0$ be the maximal domain in which $u=0$. If $\Omega \setminus \overline{\Omega_0}\ne\emptyset$ then we would have $\partial \Omega_0\cap \Omega\ne \emptyset$. Therefore we would find, by Lemma \ref{lemmaUCE3}, $z\in \partial \Omega_0\cap \Omega$ and $\mathcal{W}$ a neighborhood of $z$ in $\Omega$ so that $u=0$ in $\mathcal{W}$. That is $u=0$ in $\Omega_0\cup \mathcal{W}$ which contains strictly $\Omega_0$ and hence contradicts the maximality of $\Omega_0$.
\end{proof}

It is worth mentioning that Theorem \ref{theoremUCE1} can also be obtained as a consequence \cite[Proposition 2.28, page 28]{Ch2016Springer} that quantifies the uniqueness of continuation from a subdomain of $\Omega$ to another subdomain of $\Omega$. The proof of \cite[Proposition 2.28, page 28]{Ch2016Springer} relies on three-ball inequality which is itself a consequence of the Carleman inequality of Theorem \ref{CarlemanTheoremEll1}.

As an immediate consequence of Theorem \ref{theoremUCE1} we get uniqueness of continuation from the Cauchy data on a subboundary.
\begin{corollary}\label{corollaryUCE1}
Assume that $\mathcal{L}_A^e$ is defined in $\hat{\Omega}\Supset \Omega$. Let $\Gamma_0$ be an arbitrary nonempty open subset of $\Gamma$. If $u\in H^2(\Omega;\mathbb{C})$ satisfies $\mathcal{L}_A^eu=0$ in $\Omega$ together with $u=\partial_\nu u= 0$ in $\Gamma_0$ then $u=0$ in $\Omega$. 
\end{corollary}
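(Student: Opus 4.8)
The plan is to deduce the corollary from Theorem~\ref{theoremUCE1} via the classical device of extending $u$ by zero across the part of the boundary carrying the vanishing Cauchy data. First I would pick a point $x_0\in\Gamma_0$ and a ball $\mathcal{V}=B(x_0,r)$ with $r$ small enough that $\mathcal{V}\subset\hat\Omega$ and $\mathcal{V}\cap\Gamma\subset\Gamma_0$; set $\Omega'=\Omega\cup\mathcal{V}$, a connected open subset of $\hat\Omega$ strictly containing $\Omega$, and let $\hat u$ be the function equal to $u$ on $\Omega$ and to $0$ on $\mathcal{V}\setminus\overline{\Omega}$. Since $u=0$ on $\Gamma_0$, the trace of $u$ and of its tangential derivatives vanish on $\mathcal{V}\cap\Gamma$, and since $\partial_\nu u=0$ on $\Gamma_0$ the normal derivative vanishes there as well; hence the full gradient trace of $u$ vanishes on $\mathcal{V}\cap\Gamma$ and $\hat u\in H^2(\Omega')$. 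This is exactly the extension step already carried out in the proof of Corollary~\ref{corollaryUCw1}.

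Next I would verify that $\mathcal{L}_A^e\hat u=0$ in $\Omega'$. On $\Omega$ this is the hypothesis, on $\mathcal{V}\setminus\overline{\Omega}$ it is trivial, and across $\mathcal{V}\cap\Gamma$ a distributional computation based on Green's formula~\eqref{Gf} produces only boundary terms on $\mathcal{V}\cap\Gamma$ carrying the jumps $[\hat u]$ and $[(\nabla\hat u|\nu)_A]$, both of which are zero because of the vanishing Cauchy data. Thus $\mathcal{L}_A^e\hat u$ has no singular part and vanishes a.e.\ in $\Omega'$. I expect this to be the only genuine obstacle: one must check that the matching of the Cauchy data is precisely what kills the distributional boundary contribution, and that the local regularity of $\Gamma$ near $x_0$ suffices for these trace statements — which, as in the wave case, can be arranged by working in a neighborhood in which $\Gamma$ is a Lipschitz graph.

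Finally, $\hat u$ satisfies $\mathcal{L}_A^e\hat u=0$ in $\Omega'$, the matrix $A$ still belongs to $\mathscr{M}(\Omega',\varkappa,\mathfrak{m})$ and the lower order coefficients are still in $L^\infty(\Omega')$ with the same bound, while $\hat u=0$ on the nonempty open set $\mathcal{V}\setminus\overline{\Omega}\subset\Omega'$. Applying Theorem~\ref{theoremUCE1} with $\Omega$ replaced by $\Omega'$ — whose proof, via Lemmas~\ref{lemmaUCE1}--\ref{lemmaUCE3}, only uses interior balls and hence does not require boundary regularity of the ambient domain — yields $\hat u=0$ in $\Omega'$, and in particular $u=0$ in $\Omega$.
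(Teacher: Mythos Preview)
Your proposal is correct and follows essentially the same approach as the paper: extend $u$ by zero across a neighborhood $\mathcal{V}$ of a point of $\Gamma_0$, use the vanishing Cauchy data to ensure the extension lies in $H^2(\Omega\cup\mathcal{V})$ and still satisfies $\mathcal{L}_A^e\hat u=0$, then invoke Theorem~\ref{theoremUCE1} on the enlarged domain. Your write-up is in fact more explicit than the paper's about why the extension is $H^2$ and why no singular layer appears across $\mathcal{V}\cap\Gamma$, and your remark that Theorem~\ref{theoremUCE1} requires no boundary regularity of $\Omega'$ is a useful observation.
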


\begin{proof}
If $u\in H^2(\Omega ;\mathbb{C})$ satisfies $u=\partial_\nu u= 0$ in $\Gamma_0$ then we find $\mathcal{V}\subset \hat{\Omega}$, a neighborhood of a point in $\Gamma_0$, so that $\tilde{u}$, the extension of $u$ by zero in $\mathbb{R}^n\setminus \overline{\Omega}$, belongs to $H^2(\Omega \cup \mathcal{V})$, satisfies $\mathcal{L}_A^e\tilde{u}=0$ in $\Omega \cup \mathcal{V}$ and $\tilde{u}=0$ in $\mathcal{V}\setminus \overline{\Omega}$. Theorem \ref{theoremUCE1} allows us to conclude that $\tilde{u}=0$ and hence $u=0$.
\end{proof}

We observe once again that Corollary \ref{corollaryUCE1} can be deduced from \cite[Proposition 2.28 in page 28 and Proposition 2.30 in page 29]{Ch2016Springer}. We point out that \cite[Proposition 2.30 in page 29]{Ch2016Springer} quantifies the uniqueness of continuation from the Cauchy data on a  subboundary to an interior subdomain.

\section{Parabolic equations}

We fix in this section $0\le \psi\in C^4(\overline{Q})$ of the form $\psi(x,t)=\psi_0(x)+\psi_1(t)$, where
\[
|\nabla \psi_0|\ge \delta \quad \mbox{in}\; \overline{\Omega},
\]
for some constant $\delta >0$. Let $\phi=e^{\lambda \psi}$, $\lambda>0$, and consider the parabolic operator
\[
\mathcal{L}_A^p=\Delta _A-\partial_t+\sum_{\ell=1}^np_\ell \partial_\ell +q,
\]
where $A\in \mathscr{M}(\Omega ,\varkappa,\mathfrak{m})$, $p_1,\ldots p_n$ and $q$ belong to $L^\infty (Q ;\mathbb{C})$ and satisfy
\[
\|p_\ell\|_{L^\infty (Q)}\le \mathfrak{m},\; 1\le \ell\le n\quad \mbox{and}\quad \|q\|_{L^\infty (Q)}\le \mathfrak{m}.
\]

\subsection{Carleman inequality}

Recall that $\mathcal{L}_{A,0}^p$ represents the principal part of $\mathcal{L}_A^p$:
\[
\mathcal{L}_{A,0}^p=\Delta _A-\partial_t.
\]
We will use in the sequel the notation $\mathfrak{d}=(\Omega ,t_1,t_2,\varkappa, \delta ,\mathfrak{m} )$.

\begin{theorem}\label{CarlemanTheoremPar1}
There exist three constants $\aleph=\aleph(\mathfrak{d})$, $\lambda^\ast=\lambda^\ast(\mathfrak{d})$ and $\tau^\ast =\tau^\ast(\mathfrak{d})$ so that, for any $\lambda \ge \lambda^\ast$, $\tau \ge \tau^\ast$ and $u\in H^{2,1}(Q,\mathbb{C})$, we have
\begin{align}
&\aleph\int_Qe^{2\tau \phi}\left[\tau^3\lambda ^4\phi^3 |u|^2+\tau \lambda^2  \phi |\nabla u|^2\right]dxdt \label{C29}
\\
&\hskip 1cm \le \int_Qe^{2\tau \phi}|\mathcal{L}_A^pu|^2dxdt+\int_{\partial Q} e^{2\tau \phi}\left[\tau^3\lambda ^3\phi^3|u|^2+\tau \lambda \phi |\nabla u|^2\right]d\mu \nonumber
\\
&\hskip 3cm +\int_\Sigma e^{2\tau \phi}(\tau \lambda \phi)^{-1}|\partial_tu|^2d\sigma dt,\nonumber
\end{align}
\end{theorem}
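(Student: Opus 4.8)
The plan is to adapt the proof of Theorem~\ref{CarlemanTheoremEll1} for the elliptic operator, carrying the extra time variable and the first-order term $-\partial_t$ along. First I would set $\Phi=e^{-\tau\phi}$, $\tau>0$, and conjugate, using the identities recorded in the proof of Theorem~\ref{CarlemanTheorem1} together with $\Phi^{-1}\partial_t(\Phi w)=\partial_tw-\tau\partial_t\phi\,w$, to get
\[
L:=\Phi^{-1}\mathcal{L}_{A,0}^p(\Phi w)=\Delta_Aw-\partial_tw-2\tau(\nabla w|\nabla\phi)_A+\big(\tau^2|\nabla\phi|_A^2-\tau\mathcal{L}_{A,0}^p\phi\big)w .
\]
As in the elliptic case I would use the special splitting $L=L_0+L_1+c$ with $L_0w=\Delta_Aw+aw$ and $L_1w=(B|\nabla w)-\partial_tw+bw$, where $a=\tau^2|\nabla\phi|_A^2$, $B=-2\tau A\nabla\phi$, $b=-2\tau\mathcal{L}_{A,0}^p\phi$ and $c=\tau\mathcal{L}_{A,0}^p\phi$, so that $L_0w+L_1w+cw=Lw$.

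Next I would compute $\langle L_0w|L_1w\rangle_{L^2(Q)}$ by integration by parts. The contributions of pairing $L_0w$ with $(B|\nabla w)+bw$ are exactly those of the elliptic computation \eqref{el1}--\eqref{el5}, now with an extra integration over $(t_1,t_2)$ and with $b$ carrying the additional summand $2\tau\partial_t\phi$; they produce a quadratic density $(\mathfrak A\nabla w|\nabla w)$ and a zeroth-order density $\mathfrak a\,w^2$ for which the analogues of \eqref{el6} and \eqref{el7}, using $|\nabla\psi_0|\ge\delta$ and $\nabla\partial_t\psi=0$, give $\mathfrak A\ge c_0\,\tau\lambda^2\phi\,\mathbf I$ and $\mathfrak a\ge c_0\,\tau^3\lambda^4\phi^3$ for $\lambda\ge\lambda_1$, $\tau\ge\tau_1$, the terms coming from the extra $2\tau\partial_t\phi$ in $b$ being of strictly lower order and hence absorbed. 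The genuinely new pairings are those with $-\partial_tw$: integrating by parts in $x$ and then in $t$ and using $(\nabla w|\nabla\partial_tw)_A=\tfrac12\partial_t|\nabla w|_A^2$ together with the time-independence of $A$,
\[
-\int_Q\Delta_Aw\,\partial_tw\,dxdt=\tfrac12\int_\Omega\big[|\nabla w|_A^2\big]_{t=t_1}^{t_2}dx-\int_\Sigma(\nabla w|\nu)_A\partial_tw\,d\sigma dt ,
\]
\[
-\int_Qaw\,\partial_tw\,dxdt=\tfrac12\int_Q\partial_ta\,w^2\,dxdt-\tfrac12\int_\Omega\big[aw^2\big]_{t=t_1}^{t_2}dx ,
\]
the interior term $\tfrac12\int_Q\partial_ta\,w^2$ being $O(\tau^2\lambda^3\phi^2w^2)$ and hence absorbed for $\lambda,\tau$ large. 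Collecting everything I would arrive, for $\lambda\ge\lambda_2$, $\tau\ge\tau_2$, at
\[
2\langle L_0w|L_1w\rangle_{L^2(Q)}\ \ge\ c_0\int_Q\big(\tau\lambda^2\phi|\nabla w|_A^2+\tau^3\lambda^4\phi^3w^2\big)dxdt+\mathscr B(w) ,
\]
where $\mathscr B(w)$ gathers the boundary integrals over $\Sigma$ and over $\Omega\times\{t_1,t_2\}$.

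The step requiring the most care is the lateral boundary term $-\int_\Sigma(\nabla w|\nu)_A\partial_tw\,d\sigma dt$ in $\mathscr B(w)$: in contrast with the wave and elliptic cases, $\partial_tw$ on $\Sigma$ cannot be traded against an interior quantity, since $\mathcal{L}_{A,0}^p$ is only first order in $t$. I would bound it by Young's inequality,
\[
\big|(\nabla w|\nu)_A\partial_tw\big|\le\tfrac12\tau\lambda\phi|\nabla w|_A^2+\tfrac1{2\tau\lambda\phi}|\partial_tw|^2 ,
\]
the first piece being comparable to the $\Sigma$-densities already present in $\mathscr B(w)$ and the second piece being precisely what produces the additional boundary term $\int_\Sigma e^{2\tau\phi}(\tau\lambda\phi)^{-1}|\partial_tu|^2\,d\sigma dt$ in the statement. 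To conclude I would use $\|(L_0+L_1)w\|_{L^2(Q)}^2\ge2\langle L_0w|L_1w\rangle_{L^2(Q)}$ and $\|Lw\|_{L^2(Q)}^2\ge\tfrac12\|(L_0+L_1)w\|_{L^2(Q)}^2-\|cw\|_{L^2(Q)}^2$, absorb $\|cw\|_{L^2(Q)}^2=O(\tau^2\lambda^4\phi^2w^2)$ into the $\tau^3\lambda^4\phi^3w^2$ density for $\tau$ large, move $\mathscr B(w)$ to the right, and, after the substitution $w=\Phi^{-1}u$ via $\Phi^{-1}\nabla u=-\tau\lambda\phi\,w\nabla\psi+\nabla w$ and $\Phi^{-1}\partial_tu=-\tau\lambda\phi\,w\partial_t\psi+\partial_tw$ and the Young split above, bound $|\mathscr B(w)|$ by the two boundary terms on the right-hand side of the statement, the extra powers of $\tau\lambda\phi$ produced by the substitution being absorbed on the left exactly as at the end of the proof of Theorem~\ref{CarlemanTheorem1}. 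Finally I would add the first-order perturbation $\sum_\ell p_\ell\partial_\ell+q$ as in Corollary~\ref{CarlemanCorollary1} (its contribution being dominated by $\varkappa|\nabla u|^2+|u|^2$, hence absorbable for $\lambda,\tau$ large), reduce the complex-valued case to the real one by applying the inequality to $\mathrm{Re}\,u$ and $\mathrm{Im}\,u$, and pass from $C^\infty(\overline Q)$ to $H^{2,1}(Q)$ by density.
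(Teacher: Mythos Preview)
Your proposal is correct and follows essentially the same route as the paper: the same conjugation, the same special decomposition $L=L_0+L_1+c$ with $L_0=\Delta_A+a$, and the same integration-by-parts reductions to the elliptic densities $\mathfrak{A}$, $\mathfrak{a}$. The only cosmetic difference is that the paper places the $\tau\partial_t\phi$ contribution in $c$ (taking $b=-2\tau\Delta_A\phi$, $c=\tau\Delta_A\phi+\tau\partial_t\phi$) rather than in $b$ as you do, so that its $\mathfrak{A}$ and $\mathfrak{a}$ match the elliptic ones without extra lower-order terms to absorb; your explicit treatment of the lateral boundary term $(\nabla w|\nu)_A\partial_tw$ via Young's inequality, which is what produces the $(\tau\lambda\phi)^{-1}|\partial_tu|^2$ term on $\Sigma$, is left implicit in the paper.
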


\begin{proof}
As for the wave equation, we set $\Phi=e^{-\tau \phi}$, $\tau >0$. We also recall that
\begin{align*}
&\partial_k\Phi =-\tau \partial_k\phi \Phi  ,
\\
&\partial_{k\ell}\Phi=\left(-\tau \partial^2_{k\ell}\phi +\tau^2\partial_k\phi\partial_\ell\phi\right)\Phi,
\\
&\partial_t\Phi= -\tau \partial_t\phi\Phi  .
\end{align*}

We have, for $w\in H^{2,1}(Q,\mathbb{R})$,
\[
\Phi^{-1}\Delta_A(\Phi w)=\Delta_Aw -2\tau (\nabla w|\nabla \phi)_A+\left[\tau^2  |\nabla \phi|_A^2-\tau\Delta_Aw\right]w.
\]
Also,
\[
\Phi^{-1}\partial_t(\Phi w)=\partial_tw-\tau \partial_t\phi w.
\]
We decompose $L=\Phi^{-1}\mathcal{L}_{A,0}^p\Phi $ as in the elliptic case. That is in the form
\[
L=L_0+L_1+c,
\]
with
\begin{align*}
&L_0w=\Delta_A w+aw,
\\
&L_1w= (B|\nabla w)- \partial_tw +bw,
\end{align*}
where
\begin{align*}
&a(x,t)=\tau^2|\nabla \phi|_A^2,
\\
&b(x,t)=-2\tau \Delta_A  \phi ,
\\
&c(x,t)=\tau \Delta_A  \phi +\tau \partial_t\phi,
\\
&B=-2\tau  A\nabla\phi.
\end{align*}
We have
\begin{equation}\label{pa1}
\langle L_0w|L_1w\rangle_{L^2(Q)}=\sum_{j=1}^6 I_j.
\end{equation}
Quantities $I_j$, $1\le j\le 6$, are given as follows
\begin{align*}
&I_1=\int_Q\Delta_A w(\nabla w|B) dxdt,
\\
&I_2=-\int_Q\Delta_A w\partial_twdxdt,
\\
&I_3=\int_Q\Delta_A wbwdxdt,
\\
&I_4=\int_Qaw(\nabla w|B) dxdt,
\\
&I_5=-\int_Qaw\partial_twdxdt,
\\
&I_6=\int_Qabw^2dxdt.
\end{align*}
Let $C=(\mbox{div}(a_{k\ell}B))$ and
\[
D=C/2-A(B')^t.
\]
We already proved that
\begin{equation}\label{pa2}
I_1=\int_Q(D\nabla w|\nabla w)dxdt+\int_\Sigma \left[(\nabla w|\nu)_A(\nabla w\cdot B)-(B/2|\nu)|\nabla w|_A^2\right]d\sigma dt.
\end{equation}
On the other hand inequality \eqref{ani5} with $d=-1$ gives
\begin{equation}\label{pa3}
I_2=\int_\Sigma (\nabla w|\nu)_A \partial_twdxdt+\int_\Omega \left[ |\nabla w|_A^2/2\right]_{t=t_1}^{t_2}dx.
\end{equation}
$I_3$ is the same as in \eqref{ani6}:
\begin{align}
I_3=&-\int_Q b|\nabla w|^2dxdt +\int_Q \Delta_A(b/2) w^2 dxdt \label{pa4}
\\
&\hskip 1cm -\int_\Sigma (\nabla (b/2)|\nu)_Aw^2d\sigma dt +\int_\Sigma (\nabla w|\nu )_Abw d\sigma dt.\nonumber
\end{align}
Let $J_1=I_1+I_2+I_3$ and
\begin{align*}
&\mathfrak{A}=D-bA,
\\
&a_1=\Delta_A (b/2),
\\
&g_1(w)=(\nabla w|\nu)_A(\nabla w|B)-(B/2|\nu)|\nabla w|_A^2-(\nabla w|\nu)_A \partial_tw
\\
&\hskip 5cm -(\nabla (b/2)|\nu)_Aw^2d\sigma + (\nabla w|\nu )_Abw,
\\
&h_1(w)=\left[ |\nabla w|_A^2/2\right]_{t=t_1}^{t_2}.
\end{align*}

We find by putting together \eqref{pa2} to \eqref{pa4} 
\begin{align}
J_1=\int_Q (\mathfrak{A}\nabla w|\nabla w)dxdt &+\int_Qa_1w^2dxdt.\label{pa5}
\\
 &+\int_\Sigma g_1(w)d\sigma dt +\int_\Omega h_1(w) dx.\nonumber
\end{align}
 $I_4$ was calculated in \eqref{ani13}. Precisely, we have
\begin{equation}\label{pa6}
I_4=-\int_Q  \mbox{div}(aB/2)w^2dxdt + \int_\Sigma a(B/2|\nu)w^2d\sigma dt.
\end{equation}
On the other hand an integration by parts, with respect  to $t$, gives
\begin{equation}\label{pa7}
I_5=-\int_Q (a/2)\partial_t(w^2)dxdt= \int_Q\partial_t (a/2) w^2dxdt-\int_\Omega \left[(a/2)w^2\right]_{t=t_1}^{t_2}dx.
\end{equation}
Set
\begin{align*}
&a_2=-\mbox{div}(aB/2)+\partial_t(a/2)+ab,
\\
&g_2(w)=a(B/2|\nu)w^2,
\\
&h_2(w)=-\left[(a/2)w^2\right]_{t=t_1}^{t_2},
\end{align*}
and let $J_2=I_4+I_5+I_6$. In light of \eqref{pa6} and \eqref{pa7} we get
\begin{equation}\label{pa8}
J_2=\int_Q a_2w^2dxdt+\int_\Sigma g_2(w)d\sigma dt +\int_\Omega h_2(w) dx.
\end{equation}
Putting together \eqref{pa1}, \eqref{pa5} and \eqref{pa8} in order to obtain 
\begin{align}
&\langle L_0w|L_1w\rangle_{L^2(Q)}=2\tau \int_Q (\mathfrak{A}\nabla w|\nabla w)dxdt \label{pa9}
\\
&\hskip 4cm +\int_Q\mathfrak{a}w^2dxdt +\int_\Sigma g(w)d\sigma dt +\int_\Omega h(w) dx,\nonumber
\end{align}
where $\mathfrak{a}=a_1+a_2$, $g=g_1+g_2$ and $h=h_1+h_2$.

As $\partial_t a=0$ (which is a consequence of $\partial_t\nabla \psi=0$), we see that $\mathfrak{A}$ and $\mathfrak{a}$ has exactly the same form as in the elliptic case. Therefore we can mimic the proof of the elliptic case to complete the proof.
\end{proof}

We already defined  $\Gamma_+=\Gamma_+^{\psi_0}=\{x\in \Gamma ;\; \partial_{\nu_A} \psi_0>0\}$ and $\Sigma_+=\Sigma_+^{\psi_0}=\Gamma_+\times (t_1,t_2)$. Similarly to the wave equation we have the following result.

\begin{theorem}\label{CarlemanTheoremPar2}
There exist three constants $\aleph=\aleph(\mathfrak{d})$, $\lambda^\ast=\lambda^\ast(\mathfrak{d})$ and $\tau^\ast =\tau^\ast(\mathfrak{d})$ so that, for any $\lambda \ge \lambda^\ast$, $\tau \ge \tau^\ast$ and $u\in H^{2,1}(Q,\mathbb{C})$ satisfying $u=0$ on $\Sigma$ and $u(\cdot ,t)=0$, $t\in \{t_1,t_2\}$, we have
\begin{align}
&\aleph\int_Qe^{2\tau \phi}\left[\tau^3\lambda ^4\phi^3 |u|^2+\tau \lambda^2  \phi |\nabla u|^2\right]dxdt \label{C1.1}
\\
&\hskip1cm \le \int_Qe^{2\tau \phi}\left|\mathcal{L}_A^pu\right|^2dxdt +\tau \lambda \int_{\Sigma_+} e^{2\tau \phi}\phi |\partial_\nu u|^2 d\sigma dt,\nonumber
\end{align}
\end{theorem}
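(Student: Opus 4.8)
The plan is to follow exactly the passage from Theorem~\ref{CarlemanTheorem1} to Theorem~\ref{CarlemanTheorem1.2}, applied this time to the parabolic estimate of Theorem~\ref{CarlemanTheoremPar1}. I would reopen the proof of Theorem~\ref{CarlemanTheoremPar1} and keep track of the boundary contributions $\int_\Sigma g(w)\,d\sigma dt$, $\int_\Omega h(w)\,dx$, together with the extra term $\int_\Sigma e^{2\tau\phi}(\tau\lambda\phi)^{-1}|\partial_t u|^2\,d\sigma dt$, under the additional hypotheses. Writing $w=\Phi^{-1}u$ with $\Phi=e^{-\tau\phi}$, the conditions $u=0$ on $\Sigma$ and $u(\cdot,t)=0$ for $t\in\{t_1,t_2\}$ become $w=0$ on $\Sigma$ and $w(\cdot,t)=0$ for $t\in\{t_1,t_2\}$; since $w$ vanishes on $\Gamma$ for every $t$, one also gets $\partial_t w=0$ on $\Sigma$, and since $w$ vanishes identically on $\Omega$ at $t=t_1$ and $t=t_2$, also $\nabla w(\cdot,t_1)=\nabla w(\cdot,t_2)=0$.

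With these facts the surface data collapse. The term $\int_\Sigma e^{2\tau\phi}(\tau\lambda\phi)^{-1}|\partial_t u|^2\,d\sigma dt$, which in Theorem~\ref{CarlemanTheoremPar1} arises from estimating the contribution $-(\nabla w|\nu)_A\partial_t w$ of $g_1(w)$ on $\Sigma$, is gone since $\partial_t w=0$ there. Because $h_1(w)=\left[|\nabla w|_A^2/2\right]_{t=t_1}^{t_2}$ and $h_2(w)=-\left[(a/2)w^2\right]_{t=t_1}^{t_2}$ involve only $\nabla w$ and $w$ at $t=t_1,t_2$ — and not $\partial_t w$, which is precisely why, in contrast with the wave case, no hypothesis on $\partial_t u$ is needed here — we get $\int_\Omega h(w)\,dx=0$. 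Finally, using $w=0$, $\partial_t w=0$ and $\nabla w=(\partial_\nu w)\nu$ on $\Sigma$, every monomial of $g_1(w)+g_2(w)$ carrying a factor $w$, $w^2$ or $\partial_t w$ drops, and the two surviving terms recombine exactly as in \eqref{id1}, so that
\[
\int_\Sigma g(w)\,d\sigma dt=-\tau\lambda\int_\Sigma\phi\,(\partial_\nu w)^2\,|\nu|_A^2\,\partial_{\nu_A}\psi_0\,d\sigma dt .
\]

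In the chain of inequalities of the proof of Theorem~\ref{CarlemanTheoremPar1} this is then the only remaining boundary datum, and it enters the right-hand side of the resulting estimate after being moved over with a change of sign, i.e. as $-\int_\Sigma g(w)\,d\sigma dt=\tau\lambda\int_\Sigma\phi(\partial_\nu w)^2|\nu|_A^2\partial_{\nu_A}\psi_0\,d\sigma dt$. Splitting $\Sigma=\Sigma_+\cup(\Sigma\setminus\Sigma_+)$ and using $\partial_{\nu_A}\psi_0\le 0$ on $\Gamma\setminus\Gamma_+$, the part over $\Sigma\setminus\Sigma_+$ has a favorable sign and is simply discarded, while on $\Sigma_+$ one bounds $|\nu|_A^2\,\partial_{\nu_A}\psi_0$ by a constant depending only on $\mathfrak{d}$. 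Returning to $u=\Phi w$, identity \eqref{id2} gives $\partial_\nu u=\Phi\partial_\nu w$ on $\Sigma$, hence $\phi(\partial_\nu w)^2=\phi\,e^{2\tau\phi}(\partial_\nu u)^2$, which yields exactly the term $\tau\lambda\int_{\Sigma_+}e^{2\tau\phi}\phi|\partial_\nu u|^2\,d\sigma dt$. On the left-hand side the substitution $w=\Phi^{-1}u$ produces only the cross terms already absorbed in the proof of Theorem~\ref{CarlemanTheoremPar1}. The first order part $\sum_{\ell}p_\ell\partial_\ell+q$ is incorporated as in Corollary~\ref{CarlemanCorollary1}, via $|\mathcal{L}_{A,0}^p u|^2\le 2|\mathcal{L}_A^p u|^2+C(|\nabla u|^2+|u|^2)$ and absorption of the lower order terms for $\tau$ and $\lambda$ large, and the complex-valued case follows by applying the real estimate to $\mathrm{Re}\,u$ and $\mathrm{Im}\,u$.

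The step needing genuine care is the bookkeeping of all the surface integrals inherited from the proof of Theorem~\ref{CarlemanTheoremPar1}: one must check that, once $w=0$ on $\Sigma$ and $w(\cdot,t_1)=w(\cdot,t_2)=0$ are imposed, every term in $g_1,g_2,h_1,h_2$, together with any auxiliary boundary contribution produced to close the estimate, either vanishes identically or reduces to the single normal-derivative term above with the sign displayed; in particular one must verify that none of these terms implicitly requires $\partial_t u=0$ at $t=t_1,t_2$, this being exactly the feature that separates the parabolic situation, where it is unnecessary, from the wave situation of Theorem~\ref{CarlemanTheorem1.2}, where it is needed.
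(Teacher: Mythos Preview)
Your proposal is correct and is exactly the argument the paper has in mind: the paper gives no proof beyond ``Similarly to the wave equation we have the following result,'' and your write-up is a faithful and careful unpacking of that sentence, tracking the boundary terms $g_1,g_2,h_1,h_2$ from the proof of Theorem~\ref{CarlemanTheoremPar1} under the hypotheses $w=0$ on $\Sigma$ and $w(\cdot,t_1)=w(\cdot,t_2)=0$ and showing they collapse to the single normal-derivative term on $\Sigma_+$, just as \eqref{id1}--\eqref{id2} did in the wave case. Your observation that $h_1,h_2$ involve only $w$ and $\nabla w$ at $t=t_1,t_2$ (and not $\partial_t w$), so that no hypothesis on $\partial_t u$ at the endpoints is needed, is precisely the point distinguishing the parabolic statement from Theorem~\ref{CarlemanTheorem1.2}.
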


\subsection{Unique continuation}

An adaptation of the proof in the  case of wave equations enables us to establish the following result.

\begin{theorem}\label{theoremUCP1}
Let $u\in H^{2,1}(Q)$ satisfying $\mathcal{L}_A^pu=0$ in $Q$ and $u=0$ in $\omega \times (t_1,t_2)$, for some nonempty open subset $\omega$ of $\Omega$. Then $u=0$ in $Q$. 
\end{theorem}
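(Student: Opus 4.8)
The plan is to reproduce, for the parabolic operator, the scheme already used above for the elliptic one — local continuation across the convex paraboloid, transport to an arbitrary graph hypersurface, then a maximal–domain argument — the only novelty being the treatment of the two time slices $\{t=t_1\}$ and $\{t=t_2\}$, which for a parabolic cylinder are genuine pieces of $\partial Q$. I first prove the analogue of Lemma \ref{lemmaUCE1}. With $\psi_0(x',x_n)=(x_n-1)^2+|x'|^2$ one has $|\nabla\psi_0|\ge 1$ on a small ball $B(0,r)$ and $E_+\setminus\{0\}\subset\{\psi_0<\psi_0(0)=1\}$, where $E_+=\{x\in B(0,r);\ 0\le x_n<1,\ x_n\ge|x'|^2\}$; fix $\chi\in C_0^\infty(B(0,r))$ equal to $1$ on $B(0,\rho_1)$, then $\epsilon>0$ with $E_+\cap[B(0,r)\setminus\overline B(0,\rho_1)]\subset\{\psi_0<1-\epsilon\}$ and $0<\rho_0<\rho_1$ with $E_+\cap B(0,\rho_0)\subset\{\psi_0>1-\epsilon/2\}$. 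For a parameter $m\in(t_1,t_2)$ take $\psi(x,t)=\psi_0(x)-\gamma(t-m)^2+C$, with $C$ large enough to ensure $\psi\ge0$; crucially, Definition \ref{definitionCW}(b) imposes no upper bound on $\gamma$ (unlike \eqref{int1}--\eqref{int2} for the wave operator), so $\gamma$ is at our disposal. Given $u\in H^{2,1}$ with $\mathcal L_A^p u=0$ near $0$ and $\mbox{supp}(u(\cdot,t))\subset E_+$ for every $t$, put $v=\chi u$ — no cut-off in time — and apply Theorem \ref{CarlemanTheoremPar1} on $B(0,r)\times(t_1,t_2)$: the source $\mathcal L_A^p v=\mathcal L_A^p(\chi u)$ is supported in $\{\psi_0<1-\epsilon\}\times(t_1,t_2)$, all boundary terms over $\Sigma=\partial B(0,r)\times(t_1,t_2)$ vanish since $\chi\equiv0$ there, and on the two time slices $v$ is supported in $E_+\cap B(0,r)$, hence where $\psi\le 1-\gamma(m-t_i)^2+C$. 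Choosing $[m-\eta,m+\eta]\Subset(t_1,t_2)$ and then $\gamma$ large, $\eta$ small so that $\gamma\eta^2<\epsilon/2$ while $\gamma[(m-t_i)^2-\eta^2]>\epsilon/2$ for $i=1,2$ — possible precisely because $\gamma$ is unconstrained — the weight $e^{2\tau\phi}$ on $B(0,\rho_0)\times[m-\eta,m+\eta]$ strictly dominates its value on $\mbox{supp}(\mathcal L_A^p v)$ and on $\Omega\times\{t_1,t_2\}$; letting $\tau\to\infty$ in Theorem \ref{CarlemanTheoremPar1} forces $u=0$ on $B(0,\rho_0)\times[m-\eta,m+\eta]$. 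Since $\rho_0$ is independent of $m$ and $m$ runs over all of $(t_1,t_2)$, this gives $u=0$ on $B(0,\rho_0)\times(t_1,t_2)$.

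Next I transport this across an arbitrary graph hypersurface $\{x_n=\vartheta(x')\}$ through $\tilde x$, $\vartheta\in C^{3,1}$ with $\vartheta(0)=0$, $\nabla'\vartheta(0)=0$, exactly as between Lemma \ref{lemmaUCE1} and Lemma \ref{lemmaUCE2}: the diffeomorphism $\varphi(x',x_n)=(x',x_n-\vartheta(x')+|x'|^2)$ of \eqref{PCH0} straightens it into $\{y_n=|y'|^2\}$, and since $\varphi$ acts only in $x$, the function $v(y,t)=u(\varphi^{-1}(y),t)$ solves $\mathcal L_{\tilde A}^p v=0$ with $\tilde A=\varphi'\,A\,(\varphi')^t\circ\varphi^{-1}$ still uniformly elliptic with $C^{2,1}$ bounds (Subsection \ref{subsectionPCH}) and $\mbox{supp}(v(\cdot,t))$ contained in the paraboloidal region, so the first step applies. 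As in Lemma \ref{lemmaUCE3}, a preliminary translation and an orthogonal change of coordinates — which only modifies $A$ within the class $\mathscr M(\Omega,\varkappa,\mathfrak m')$ by Lemma \ref{lemmaPCH0} — reduce an arbitrary point of a separating sphere to this situation. This yields: if $\Omega_0\subset\Omega$ is open with $\partial\Omega_0\cap\Omega\ne\emptyset$ and $u=0$ in $\Omega_0\times(t_1,t_2)$, then $u=0$ on $\mathcal W\times(t_1,t_2)$ for a neighborhood $\mathcal W$ of some $z\in\partial\Omega_0\cap\Omega$; here $z\in\partial B(y_0,d)\cap\partial\Omega_0$ with $y_0\in\Omega_0$ chosen close to $\partial\Omega_0\cap\Omega$ and $d=\mbox{dist}(y_0,\partial\Omega_0)$, exactly as in Lemma \ref{lemmaUCE3}.

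The conclusion then follows from the maximal–domain argument of Theorem \ref{theoremUCE1}: let $\Omega_0$ be the union of all open $\omega'\subset\Omega$ on which $u$ vanishes over $(t_1,t_2)$; by hypothesis $\Omega_0\supset\omega\ne\emptyset$, and if $\Omega_0\ne\Omega$ then $\partial\Omega_0\cap\Omega\ne\emptyset$, so the previous step exhibits an open set strictly larger than $\Omega_0$ on which $u$ still vanishes over $(t_1,t_2)$, contradicting maximality. Hence $\Omega_0=\Omega$, i.e. $u=0$ in $Q$.

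The one genuinely delicate point is the treatment of $\{t=t_1\}$ and $\{t=t_2\}$ in the first step: one cannot cut $u$ off in time, because the commutator term produced near the edge of a time cut-off would be supported, in space, over the tip of the paraboloid, precisely where the weight is largest. The remedy is to keep $v=\chi u$ untouched in $t$, to pay for the two uncontrolled slices directly through the weight, to absorb them by taking the time-parabola $-\gamma(t-m)^2$ as steep as necessary — admissible for the parabolic operator but forbidden for the wave operator — and then to sweep its vertex $m$ across the whole interval. This is exactly why Theorem \ref{theoremUCP1} holds on the entire cylinder $Q$, whereas the wave equation yields only weak unique continuation.
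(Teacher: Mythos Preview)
Your argument is correct and is precisely the ``adaptation of the proof in the case of wave equations'' that the paper has in mind: the same cut-off in space, the same weight comparison, the same passage through the diffeomorphism $\varphi$ of Subsection~\ref{subsectionPCH}, and the same maximal--domain conclusion, with the one new ingredient being that Definition~\ref{definitionCW}(b) places no constraint on $\psi_1$, so the time-parabola can be made as steep as needed and its vertex $m$ swept across $(t_1,t_2)$ to recover the whole cylinder.

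One remark on your final paragraph: the claim that a time cut-off is \emph{forbidden} is too strong. If you set $v=\chi(x)\vartheta(t)u$ with $\vartheta\in C_0^\infty((t_1,t_2))$, the extra source $-\chi u\,\vartheta'$ is indeed supported, in space, up to the tip of $E_+$ where $\psi_0=1$; but in time it lives where $\vartheta'\neq 0$, hence at distance $\ge d>0$ from $m$, so there $\psi\le 1-\gamma d^2+C$, which is still strictly below the value $1-\epsilon/2-\gamma\eta^2+C$ on $\mathbf{Q}_0$ once $\eta<d$ and $\gamma(d^2-\eta^2)>\epsilon/2$. In other words the wave-equation scheme with a time cut-off works here too, for exactly the same reason your boundary-term scheme works: $\gamma$ is free. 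Your choice to pay for the two slices $\{t=t_1\}$, $\{t=t_2\}$ directly through the weight rather than through a cut-off is a perfectly valid variant, arguably cleaner, but it is not forced.
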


We remark that Theorem \ref{theoremUCP1} can be also obtained from \cite[Proposition 3.2]{CY2017Arxiv}.

Similarly to the case of wave equations Theorem \ref{theoremUCP1} can serve to prove uniqueness of continuation from the Cauchy data on a subboundary.

\begin{corollary}\label{corollaryUCP1}
Assume that $\mathcal{L}_A^p$ is defined in $\hat{\Omega}\Supset \Omega$. Let $\Gamma_0$ be an arbitrary nonempty open subset of $\Gamma$. If $u\in H^1((t_1,t_2), H^2(\Omega))$ satisfies $\mathcal{L}_A^pu=0$ in $Q$ and $u=\partial_\nu u= 0$ in $\Sigma_0=\Gamma_0\times (t_1,t_2)$ then $u=0$ in $Q$. 
\end{corollary}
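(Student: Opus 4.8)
The plan is to mimic the proof of Corollary~\ref{corollaryUCE1}, reducing the Cauchy-data statement to the interior unique continuation result of Theorem~\ref{theoremUCP1} by a zero-extension argument across $\Gamma_0$.

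First I would localize. Pick $x_0\in\Gamma_0$ and $r>0$ small enough that $B(x_0,r)\Subset\hat\Omega$ and $B(x_0,r)\cap\Gamma\subset\Gamma_0$, set $\mathcal{V}=B(x_0,r)\cap\hat\Omega$, and $\Omega'=\Omega\cup\mathcal{V}$, the neighborhood being chosen so that $\Omega'$ is again a bounded Lipschitz domain (possible after shrinking $r$ and, if necessary, replacing the ball by a suitable smaller neighborhood of $x_0$). Then $\omega'=\mathcal{V}\setminus\overline{\Omega}$ is a nonempty open subset of $\Omega'$. Since $A\in\mathscr{M}(\hat\Omega,\varkappa,\mathfrak{m})$ and $p_\ell,q\in L^\infty(\hat\Omega\times(t_1,t_2);\mathbb{C})$, the operator $\mathcal{L}_A^p$ is well defined on $\Omega'\times(t_1,t_2)$ with the same structural constants.

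Next I would let $\tilde u$ be the extension of $u$ by zero to $\Omega'\times(t_1,t_2)$ and check that $\tilde u\in H^1((t_1,t_2);H^2(\Omega'))$ (which embeds into $H^{2,1}(\Omega'\times(t_1,t_2))$) and that $\mathcal{L}_A^p\tilde u=0$ in $\Omega'\times(t_1,t_2)$. The only substantive point is the spatial regularity at the interface $(\Gamma\cap\mathcal{V})\times(t_1,t_2)\subset\Sigma_0$: for a.e.\ $t$, $u(\cdot,t)\in H^2(\Omega)$ has vanishing Dirichlet trace on $\Gamma_0$ — hence vanishing tangential gradient of the trace — and vanishing conormal derivative there, so the full gradient of $u(\cdot,t)$ vanishes on $\Gamma_0$; consequently its zero extension lies in $H^2(\Omega')$ with $\nabla\tilde u(\cdot,t)$ and $\nabla^2\tilde u(\cdot,t)$ equal to the zero extensions of $\nabla u(\cdot,t)$ and $\nabla^2u(\cdot,t)$, and no singular part supported on $\Gamma_0$. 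Integrating the resulting estimate in $t$ and observing that $\partial_t\tilde u$ is the zero extension of $\partial_t u$ gives the claimed regularity. Since $\mathcal{L}_A^pu=0$ in $Q$, $\tilde u\equiv0$ on $\omega'$, and the zero-extension identities above annihilate all interface contributions, we get $\mathcal{L}_A^p\tilde u=0$ in $\Omega'\times(t_1,t_2)$.

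Finally, $\tilde u=0$ in $\omega'\times(t_1,t_2)$, so Theorem~\ref{theoremUCP1} applied with $\Omega$ replaced by $\Omega'$ yields $\tilde u=0$ in $\Omega'\times(t_1,t_2)$, whence $u=0$ in $Q$. The main (and only genuinely non-formal) obstacle is the $H^2$-regularity of the zero extension across $\Gamma_0$ together with the vanishing of the interface terms; once that is established — it is a standard consequence of $u=\partial_\nu u=0$ on $\Sigma_0$ — the corollary follows purely formally, exactly as in the elliptic case.
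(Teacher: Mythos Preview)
Your proposal is correct and follows exactly the approach indicated in the paper, which simply says the corollary is proved ``similarly to the case of wave equations'' via Theorem~\ref{theoremUCP1}; this is the zero-extension argument modeled on Corollary~\ref{corollaryUCE1} that you carry out in detail.
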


We remark that Corollary \ref{corollaryUCP1} follows also from \cite[Proposition 3.2 and Proposition 4.1]{CY2017Arxiv}.

\subsection{Final time observability inequality}

We assume in the present subsection that $t_1=0$ and $t_2=\mathfrak{t}>0$, and we consider the IBVP
\begin{equation}\label{OIpa1}
\left\{
\begin{array}{ll}
\Delta_Au-\partial_tu=f\quad \mbox{in}\; Q,
\\
u(\cdot ,0)=u_0,
\\
u_{|\Sigma}=0.
\end{array}
\right.
\end{equation}

Define the unbounded operator $\mathscr{A}:L^2(\Omega )\rightarrow L^2(\Omega )$ by
\[
\mathscr{A}u=-\Delta_Au,\quad D(\mathscr{A})=H_0^1(\Omega )\cap H^2(\Omega).
\]
It is known that $-\mathscr{A}$ generates an analytic semigroup $e^{-t\mathscr{A}}$. In particular,  for any $(u_0,f)\in L^2(\Omega )\times L^1((0,T);L^2(\Omega))$, the IBVP \eqref{OIpa1} has a unique (mild) solution $u=\mathscr{S}(u_0,f)\in C([0,T];L^2(\Omega))$ so that
\begin{equation}\label{OIp2}
\|u(\cdot ,t)\|_{L^2(\Omega)}\le \|u_0\|_{L^2(\Omega)}+\|f\|_{L^1((0,T);L^2(\Omega))},\quad 0\le t\le \mathfrak{t}.
\end{equation}
This solution is given by Duhamel's formula
\begin{equation}\label{OIp0}
u(t)=e^{-t\mathscr{A}}u_0+\int_0^te^{-(t-s)\mathscr{A}}f(s)ds,\quad 0\le t\le \mathfrak{t}.
\end{equation}
Note that if $u_0\in D(\mathscr{A})$  then $u=\mathcal{S}(u_0,0)$ satisfies 
\[
u\in C([0,\mathfrak{t}];D(\mathscr{A}))\cap C^1([0,\mathfrak{t}];L^2(\Omega )).
\]
We refer to \cite[Chapter 11]{RR1993Springer} for a concise introduction to semigroup theory.

\begin{lemma}\label{lemmaOIp1}
Let $f\in L^2(Q)$ and $\zeta \in C_0^\infty ([0,\mathfrak{t}))$. Then $u(\mathfrak{t})=\mathscr{S}(0,\zeta f)(\mathfrak{t})\in H_0^1(\Omega )$ and 
\begin{equation}\label{OIp0.1}
\|u(\mathfrak{t})\|_{H_0^1(\Omega )}\le \aleph \|\zeta f\|_{L^2(Q)},
\end{equation}
where $\aleph>0$ is a constant only depending on $\Omega$, $A$, $\mathfrak{t}$ and $\zeta$.
\end{lemma}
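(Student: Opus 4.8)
The plan is to read $u(\mathfrak{t})$ off Duhamel's formula \eqref{OIp0} and to exploit both the smoothing property of the analytic semigroup $e^{-t\mathscr{A}}$ and the fact that $\zeta$ vanishes in a neighbourhood of $t=\mathfrak{t}$. First I would record the standard facts attached to $\mathscr{A}$: since $A$ is symmetric, $\mathscr{A}u=-\Delta_Au$ is the self-adjoint operator associated with the symmetric bilinear form $a(u,v)=\int_\Omega(\nabla u|\nabla v)_A\,dx$ on $H_0^1(\Omega)$, which is coercive by $\varkappa^{-1}|\xi|^2\le (A(x)\xi|\xi)$ together with Poincaré's inequality. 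Hence $\mathscr{A}$ is positive, $\mathscr{A}^{1/2}$ is well defined, $D(\mathscr{A}^{1/2})=H_0^1(\Omega)$ with $\|\mathscr{A}^{1/2}v\|_{L^2(\Omega)}$ an equivalent norm on $H_0^1(\Omega)$, and analyticity of the semigroup yields a constant $c=c(\Omega,A)$ with $\|\mathscr{A}^{1/2}e^{-\sigma\mathscr{A}}\|_{\mathcal{L}(L^2(\Omega))}\le c\,\sigma^{-1/2}$ for all $\sigma>0$.

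Next, since $\zeta\in C_0^\infty([0,\mathfrak{t}))$, there is $\eta=\eta(\zeta)\in(0,\mathfrak{t})$ with $\mbox{supp}\,\zeta\subset[0,\mathfrak{t}-\eta]$. Applying \eqref{OIp0} with $u_0=0$ and source $\zeta f$,
\[
u(\mathfrak{t})=\int_0^{\mathfrak{t}-\eta}e^{-(\mathfrak{t}-s)\mathscr{A}}\zeta(s)f(s)\,ds,
\]
and on this interval $\mathfrak{t}-s\ge\eta$, so the map $s\mapsto \mathscr{A}^{1/2}e^{-(\mathfrak{t}-s)\mathscr{A}}\zeta(s)f(s)$ is Bochner integrable in $L^2(\Omega)$, with $L^2$-norm bounded by $c\,\eta^{-1/2}\|\zeta(s)f(s)\|_{L^2(\Omega)}$. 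Since $\mathscr{A}^{1/2}$ is closed it commutes with this integral, whence $u(\mathfrak{t})\in D(\mathscr{A}^{1/2})=H_0^1(\Omega)$, and by the Cauchy-Schwarz inequality in the $s$ variable
\[
\|\mathscr{A}^{1/2}u(\mathfrak{t})\|_{L^2(\Omega)}\le c\,\eta^{-1/2}\int_0^{\mathfrak{t}}\|\zeta(s)f(s)\|_{L^2(\Omega)}\,ds\le c\,\eta^{-1/2}\sqrt{\mathfrak{t}}\,\|\zeta f\|_{L^2(Q)}.
\]
Converting the left-hand side back to the $H_0^1(\Omega)$-norm with the equivalence above yields \eqref{OIp0.1}, the constant $\aleph$ depending only on $\Omega$, $A$, $\mathfrak{t}$ and $\zeta$ (through $\eta$).

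The only step requiring some care is the interchange of the closed operator $\mathscr{A}^{1/2}$ with the Bochner integral; this is legitimate precisely because the support condition on $\zeta$ keeps $\mathfrak{t}-s$ bounded below by $\eta>0$, so that the integrand remains in $L^1((0,\mathfrak{t});L^2(\Omega))$ despite the $(\mathfrak{t}-s)^{-1/2}$ factor in the smoothing bound. Everything else is the classical smoothing estimate for analytic semigroups together with the identification $D(\mathscr{A}^{1/2})=H_0^1(\Omega)$ for the Dirichlet realization of a uniformly elliptic operator in divergence form, for which one may refer to the semigroup references cited above.
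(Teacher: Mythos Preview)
Your proof is correct and follows essentially the same approach as the paper: Duhamel's formula, the identification $D(\mathscr{A}^{1/2})=H_0^1(\Omega)$, the analytic smoothing bound $\|\mathscr{A}^{1/2}e^{-\sigma\mathscr{A}}\|\le c\,\sigma^{-1/2}$, and the support condition on $\zeta$ to avoid the singularity at $s=\mathfrak{t}$. The only cosmetic difference is that the paper keeps the factor $(\mathfrak{t}-s)^{-1/2}$ under the integral and applies Cauchy--Schwarz to produce $\bigl(\int_0^{\mathfrak{t}-\epsilon}(\mathfrak{t}-s)^{-1}ds\bigr)^{1/2}=\sqrt{\ln(\mathfrak{t}/\epsilon)}$, whereas you bound $(\mathfrak{t}-s)^{-1/2}\le\eta^{-1/2}$ first and then apply Cauchy--Schwarz; both give a constant depending only on $\Omega$, $A$, $\mathfrak{t}$ and $\zeta$.
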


\begin{proof}
We have from \cite[Theorem 8.1 in page 254]{Ou2005} that $D(\mathscr{A}^{1/2})=H_0^1(\Omega)$. Therefore, in light of \eqref{OIp0}, we obtain
\[
\mathscr{A}^{1/2}u(\mathfrak{t})=\int_0^\mathfrak{t}\mathscr{A}^{1/2}e^{-(\mathfrak{t}-s)\mathscr{A}}(\zeta (s)f(s))ds.
\]
But 
\[
\left\|\mathscr{A}^{1/2}e^{-t\mathscr{A}}\right\|_{L^2(\Omega)}\le \aleph_0t^{-1/2},\quad t>0, 
\]
where the constant $\aleph_0>0$  only depends on $\Omega$, $A$. If $\mbox{supp}(\zeta)\subset [0,\mathfrak{t}-\epsilon]$, $\epsilon>0$, we find
\[
\left\|\mathscr{A}^{1/2}u(\mathfrak{t})\right\|_{L^2(\Omega)}\le \aleph_0\int_0^{\mathfrak{t}-\epsilon}(\mathfrak{t}-s)^{-1/2}\|\zeta (s)f(s)\|_{L^2(\Omega )}ds.
\]
Whence Cauchy-Schwarz's inequality yields
\begin{align*}
\left\|\mathscr{A}^{1/2}u(\mathfrak{t})\right\|_{L^2(\Omega)}&\le \aleph_0\left(\int_0^{\mathfrak{t}-\epsilon}(\mathfrak{t}-s)^{-1}ds\right)^{1/2}\|\zeta f\|_{L^2(Q)}
\\
&\le \aleph_0 \ln\left[ (\mathfrak{t}/\epsilon)^{1/2}\right]\|\zeta f\|_{L^2(Q)}.
\end{align*}
The expected inequality then follows.
\end{proof}

\begin{theorem}\label{OIp1}
Let $0\le \psi_0 \in C^4(\Omega )$ with no critical point in $\overline{\Omega}$. Set $\Gamma_+=\{x\in \Gamma ;\; \partial_{\nu_A}\psi_0(x)>0\}$ and $\Sigma_+=\Gamma_+\times (0,\mathfrak{t})$. For any $u=e^{-t\mathscr{A}}u_0$ with $u_0\in D(\mathscr{A})$ we have
\[
\|u(\mathfrak{t})\|_{H_0^1(\Omega )}\le \aleph \|\partial _\nu u\|_{L^2(\Sigma_+)},
\]
where the constant $\aleph>0$  only depends of $\Omega$, $A$ and $\mathfrak{t}$.
\end{theorem}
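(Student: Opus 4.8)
The plan is to couple the parabolic Carleman inequality of Theorem~\ref{CarlemanTheoremPar2} with the smoothing and contraction properties of the heat semigroup. Writing $\mathscr A=-\Delta_A$, self‑adjointness and nonnegativity of $\mathscr A$ give that $(e^{-t\mathscr A})$ is a contraction semigroup, so $t\mapsto\|u(\cdot,t)\|_{L^2(\Omega)}$ is nonincreasing, while $D(\mathscr A^{1/2})=H_0^1(\Omega)$ and $\|\mathscr A^{1/2}e^{-s\mathscr A}\|_{L^2\to L^2}\le\aleph_0 s^{-1/2}$. These two dynamical facts are the only inputs besides the Carleman estimate. First I reduce the left‑hand side to an interior space–time quantity: writing $u(\mathfrak t)=e^{-(\mathfrak t/2)\mathscr A}u(\mathfrak t/2)$ and using $D(\mathscr A^{1/2})=H_0^1(\Omega)$ together with the smoothing bound and $\|u(\cdot,\mathfrak t/2)\|_{L^2(\Omega)}^2\le(8/\mathfrak t)\|u\|_{L^2(\Omega\times(3\mathfrak t/8,\mathfrak t/2))}^2$ (monotonicity), one gets $\|u(\mathfrak t)\|_{H_0^1(\Omega)}\le\aleph\,\|u\|_{L^2(\Omega\times(3\mathfrak t/8,\mathfrak t/2))}$; equivalently, apply Lemma~\ref{lemmaOIp1} to $\tilde w=(1-\zeta)u=\mathscr S(0,-\zeta' u)$ for a time cutoff $\zeta$ equal to $1$ near $0$ and to $0$ near $\mathfrak t$, noting $\tilde w(\mathfrak t)=u(\mathfrak t)$. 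It therefore suffices to prove the interior observability inequality $\|u\|_{L^2(\Omega\times(3\mathfrak t/8,\mathfrak t/2))}\le\aleph\,\|\partial_\nu u\|_{L^2(\Sigma_+)}$.

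For the interior estimate I apply Theorem~\ref{CarlemanTheoremPar2} to a time truncation of $u$. Take $\psi(x,t)=\psi_0(x)+\psi_1(t)$ with $\psi_1(t)=-\beta(t-\mathfrak t/2)^2+C$, where $C$ makes $\psi\ge0$ and $\beta$ is so large that $3\beta\mathfrak t^2/256>\operatorname{osc}_{\overline\Omega}\psi_0$; since $\psi_0$ has no critical point in $\overline\Omega$, $\phi=e^{\lambda\psi}$ is a weight function for $\mathcal L_{A,0}^p$ in the sense of Definition~\ref{definitionCW}(b). Fix $\lambda\ge\lambda^\ast$, pick $\varrho\in C_0^\infty((\mathfrak t/4,3\mathfrak t/4))$ with $\varrho\equiv1$ on $[3\mathfrak t/8,5\mathfrak t/8]$, and set $v=\varrho u$. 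As $u=e^{-t\mathscr A}u_0$ solves $\Delta_Au-\partial_tu=0$, $v$ satisfies $v=0$ on $\Sigma$, $v(\cdot,0)=v(\cdot,\mathfrak t)=0$, and $\mathcal L_A^pv=-\varrho'u$ (with $p=q=0$), so Theorem~\ref{CarlemanTheoremPar2} gives, for $\tau\ge\tau^\ast$,
\[
\aleph\,\tau^3\lambda^4\!\int_Qe^{2\tau\phi}\phi^3|v|^2\,dxdt\le\int_Qe^{2\tau\phi}|\varrho'u|^2\,dxdt+\tau\lambda\!\int_{\Sigma_+}e^{2\tau\phi}\phi\,|\partial_\nu u|^2\,d\sigma dt .
\]
On the left keep only the slab $\Omega\times(7\mathfrak t/16,\mathfrak t/2)$, where $v=u$ and $\phi\ge c_0:=e^{\lambda(\min\psi_0+C-\beta\mathfrak t^2/256)}$; on the right $\operatorname{supp}\varrho'\subset\{|t-\mathfrak t/2|\ge\mathfrak t/8\}$, where $\phi\le c_1:=e^{\lambda(\max\psi_0+C-\beta\mathfrak t^2/64)}$, and the choice of $\beta$ yields $c_1<c_0$. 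One is left with
\[
\aleph\,\tau^3\lambda^4c_0^3e^{2\tau c_0}\,\|u\|_{L^2(\Omega\times(7\mathfrak t/16,\mathfrak t/2))}^2\le\|\varrho'\|_\infty^2\,e^{2\tau c_1}\,\|u\|_{L^2(\Omega\times\operatorname{supp}\varrho')}^2+C(\tau,\lambda)\,\|\partial_\nu u\|_{L^2(\Sigma_+)}^2 ,
\]
after which, combining with the first step and fixing $\tau$ large, one recovers $\|u(\mathfrak t)\|_{H_0^1}\le\aleph\|\partial_\nu u\|_{L^2(\Sigma_+)}$.

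The part of $\operatorname{supp}\varrho'$ lying to the \emph{right} of the slab is routine: by monotonicity of $t\mapsto\|u(\cdot,t)\|_{L^2}$ it is bounded by $\|u\|_{L^2(\Omega\times(7\mathfrak t/16,\mathfrak t/2))}^2$, and since $e^{2\tau c_1}=o(\tau^3e^{2\tau c_0})$ it is absorbed into the left‑hand side for $\tau$ large. The genuine obstacle is the part of $\operatorname{supp}\varrho'$ at \emph{early} times, where $\|u(\cdot,t)\|_{L^2}$ may be arbitrarily much larger than on the slab — the heat flow admits no backward a priori bound — so this term cannot be disposed of by the elementary argument above. Handling it is where one must use essentially that $u$ solves the homogeneous equation on all of $(0,\mathfrak t)$ and that the observation acts over the full interval $\Sigma_+=\Gamma_+\times(0,\mathfrak t)$: either one iterates the Carleman estimate on time cutoffs supported successively nearer $t=0$, propagating the bound backward in finitely many steps, the terminal step being controlled by the smoothing bound of the first step applied to $u_0\in D(\mathscr A)$; or one avoids truncating $v$ near $t=0$ and instead exploits the favorable sign of the initial‑time boundary term $[\,|\nabla w|_A^2/2\,]_{t=0}$ in the parabolic Carleman identity, which encodes the forward well‑posedness of the heat equation. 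I expect this reconciliation of the bounded Carleman weight used here with the lack of control on $u$ near $t=0$ to be the heart of the matter; once the interior bound $\|u\|_{L^2(\Omega\times(7\mathfrak t/16,\mathfrak t/2))}\le\aleph\|\partial_\nu u\|_{L^2(\Sigma_+)}$ is in hand, the theorem is immediate.
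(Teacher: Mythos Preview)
Your scheme is the paper's: cut off in time, apply Theorem~\ref{CarlemanTheoremPar2} to the truncation, then convert the resulting interior $L^2$ bound to $\|u(\mathfrak t)\|_{H_0^1(\Omega)}$ via the smoothing estimate of Lemma~\ref{lemmaOIp1}. The paper performs the smoothing step last rather than first and does not introduce a time--dependent $\psi_1$, but the substance is the same.

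The gap you isolate is real, and your proposal does not close it. After applying the Carleman inequality to $\varrho u$, the paper simply asserts that the commutator term $\int_Q e^{2\tau\phi}|\varrho' u|^2$ ``can be absorbed by the left hand side'' for $\lambda,\tau$ large; you have correctly seen that this is not automatic, since the left side carries only $|\varrho u|^2$ and $|\varrho'|/|\varrho|$ is unbounded near $\partial\operatorname{supp}\varrho$. Your time--dependent weight relocates the difficulty to the early--time portion of $\operatorname{supp}\varrho'$ but does not remove it, and neither of your proposed remedies works as stated. Terminating an iteration by ``the smoothing bound applied to $u_0\in D(\mathscr A)$'' would inject $\|u_0\|$ into the final estimate, which an observability inequality cannot contain; a correct iteration would have to show that the geometric damping $e^{-2\tau(c_0-c_1)}$ at each step outruns the possible backward growth of $\|u(\cdot,t)\|_{L^2}$ \emph{uniformly in $u$}, and this is not done. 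The second idea --- keeping the data at $t=0$ and relying on a favorable sign --- also fails with the present Carleman identity: from the proof of Theorem~\ref{CarlemanTheoremPar1} the $t=t_1$ contribution appearing on the right--hand side is $+\tfrac12\int_\Omega|\nabla w(\cdot,t_1)|_A^2-\tfrac12\int_\Omega a\,w(\cdot,t_1)^2$, and the gradient part has the unfavorable sign with a weight growing in $\tau$. So you have located the delicate step more carefully than the paper's own proof does, but the proposal stops at exactly that step.
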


\begin{proof}
Let $\chi \in C_0^\infty ((0,\mathfrak{t}))$ satisfying $\chi=1$ in $[\mathfrak{t}/4,3\mathfrak{t}/4]$. As $(\Delta_A-\partial_t)(\chi u)=-\chi 'u$, we get by applying Theorem \ref{CarlemanTheoremPar2}, for any $\lambda \ge \lambda^\ast$, $\tau \ge \tau^\ast$,
\begin{align*}
&\aleph\int_Qe^{2\tau \phi}\left[\tau^3\lambda ^4\phi^3 |u|^2+\tau \lambda^2  \phi |\nabla u|^2\right]dxdt 
\\
&\hskip1cm \le \int_Qe^{2\tau \phi}\left|u\right|^2dxdt +\tau \lambda \int_{\Sigma_+} e^{2\tau \phi}\phi |\partial_\nu u|^2 d\sigma dt,
\end{align*}
where the notations are those of Theorem \ref{CarlemanTheoremPar2}.

Since the first term in the right hand side of this inequality  can be absorbed by the left hand side, provided that $\lambda$ and $\tau$ are sufficiently large, we obtain in a straightforward manner
\begin{equation}\label{OIp3}
\|u\|_{L^2(\Omega \times (\mathfrak{t}/4,3\mathfrak{t}/4))}\le \aleph \|\partial _\nu u\|_{L^2(\Sigma_+)}.
\end{equation}
Pick $\varphi \in C^\infty([0,\mathfrak{t}])$ so that that $\varphi=0$ in $[0,\mathfrak{t}/4]$ and $\varphi=1$ in $[3\mathfrak{t}/4,\mathfrak{t}]$. We easily check that $\varphi u=\mathscr{S}(0,\varphi'u)$. In light of Lemma \ref{lemmaOIp1}, we then conclude that
\[
\|u(\mathfrak{t})\|_{H_0^1(\Omega )}\le \aleph \|\varphi'u\|_{L^2(Q)}=\aleph \|\varphi'u\|_{L^2(\Omega \times (\mathfrak{t}/4,3\mathfrak{t}/4))}.
\]
This and \eqref{OIp3} imply the expected inequality.
\end{proof}

\section{Schr\"odinger equations}

Let $\phi=e^{\lambda \psi}$ be a weight function for  the Schr\"odinger operator
\[
\mathcal{L}_{A,0}^s=\Delta _A+i\partial_t
\]
and set
\[
\mathcal{L}_A^s=\Delta _A+i\partial_t+\sum_{\ell=1}^np_\ell \partial_\ell +q,
\]
where $p_1,\ldots p_n$ and $q$ belong to $L^\infty (Q ;\mathbb{C})$ and satisfy
\[
\|p_\ell\|_{L^\infty (Q)}\le \mathfrak{m},\; 1\le \ell\le n\quad \mbox{and}\quad \|q\|_{L^\infty (Q)}\le \mathfrak{m}.
\]

\subsection{Carleman inequality}

Let 
\[
\delta =\min_{\overline{Q}}|\nabla \psi|_A\; (>0)
\]
and $\mathfrak{d}=(\Omega ,t_1,t_2,\varkappa, \delta ,\mathfrak{m} )$.

\begin{theorem}\label{CarlemanTheoremSch1}
There exist three constants $\aleph=\aleph(\mathfrak{d})$, $\lambda^\ast=\lambda^\ast(\mathfrak{d})$ and $\tau^\ast =\tau^\ast(\mathfrak{d})$ so that, for any $\lambda \ge \lambda^\ast$, $\tau \ge \tau^\ast$ and $u\in H^{2,1}(Q,\mathbb{C})$, we have
\begin{align*}
&\aleph\int_Qe^{2\tau \phi}\left[\tau^3\lambda ^4\phi^3 |u|^2+\tau \lambda  \phi |\nabla u|^2\right]dxdt 
\\
&\hskip1cm \le \int_Qe^{2\tau \phi}|\mathcal{L}_A^su|^2dxdt+\int_{\partial Q} e^{2\tau \phi}\left[\tau^3\lambda ^3\phi^3|u|^2+\tau \lambda \phi |\nabla u|^2\right]d\mu \nonumber
\\
&\hskip 7cm +\int_\Sigma e^{2\tau \phi}(\tau \lambda \phi)^{-1}|\partial_tu|^2d\sigma dt ,\nonumber
\end{align*}
\end{theorem}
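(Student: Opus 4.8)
The plan is to follow the proof of the parabolic Carleman inequality (Theorem~\ref{CarlemanTheoremPar1}) almost line by line, the only structural change being the replacement of $-\partial_t$ by $i\partial_t$. First I would set $\Phi=e^{-\tau\phi}$ and, from the formulas for $\partial_k\Phi$, $\partial^2_{k\ell}\Phi$ and $\partial_t\Phi$ already recorded, compute the conjugated operator $L=\Phi^{-1}\mathcal{L}_{A,0}^s\Phi$, which reads
\[
Lw=\Delta_Aw+i\partial_tw-2\tau(\nabla w|\nabla\phi)_A+\big(\tau^2|\nabla\phi|_A^2-\tau\Delta_A\phi-i\tau\partial_t\phi\big)w ,
\]
and then decompose it as in the parabolic case, $L=L_0+L_1+c$, with
\[
L_0w=\Delta_Aw+aw,\qquad L_1w=(B|\nabla w)+i\partial_tw+bw ,
\]
where $a=\tau^2|\nabla\phi|_A^2$, $B=-2\tau A\nabla\phi$, $b=-2\tau\Delta_A\phi-i\tau\partial_t\phi$, and $c$ a lower order remainder (of order $\tau\lambda^2\phi$). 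The heart of the matter is then to expand $\langle L_0w|L_1w\rangle_{L^2(Q)}$ and integrate by parts in $x$ and $t$.

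All the space integrations by parts are exactly those carried out for the wave and parabolic operators: the analogues of $I_1$, $I_3$, $I_4$ and $I_6$ yield $\int_Q(\mathfrak A\nabla w|\nabla w)\,dxdt$ with $\mathfrak A=2\tau A\nabla^2\phi A+\tau\Upsilon_A(\phi)$ up to lower order; inserting $\nabla^2\phi=\lambda^2\phi\,\nabla\psi\otimes\nabla\psi+\lambda\phi\,\nabla^2\psi$ and $\Upsilon_A(\phi)=\lambda\phi\,\Upsilon_A(\psi)$, the $\lambda^2\phi$-part becomes the manifestly nonnegative $2\tau\lambda^2\phi\,|(\nabla\psi|\nabla w)_A|^2$ and the $\lambda\phi$-part is bounded below by $\varkappa^{-1}\kappa\,\tau\lambda\phi\,|\nabla w|_A^2$ thanks to the $A$-pseudo-convexity of $\psi_0$ required in Definition~\ref{definitionCW}(c). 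This is precisely where pseudo-convexity enters, and it explains why the gradient gain is of order $\tau\lambda$, as for the wave equation, and not $\tau\lambda^2$ as for the parabolic one. The zeroth order coefficient produced by $-\operatorname{div}(aB/2)+ab+\Delta_A(b/2)$ is, exactly as in the elliptic and parabolic cases, $\tau^3\lambda^4\phi^3|\nabla\psi|_A^4$ plus terms of lower order, hence bounded below by $\tau^3\lambda^4\phi^3\delta^4/2$ for $\lambda$ and $\tau$ large, with $\delta=\min_{\overline Q}|\nabla\psi|_A$.

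The genuinely new contributions come from the pieces of $\langle L_0w|L_1w\rangle$ containing $i\partial_tw$, namely $\langle\Delta_Aw|i\partial_tw\rangle$ and $\langle aw|i\partial_tw\rangle$. In the parabolic case the corresponding terms collapse to pure boundary terms because the coefficient of $\partial_t$ is the constant $-1$; here, because of the factor $i$ and the conjugations in the Hermitian inner product, integrating by parts in $t$ (and, for the first one, also in $x$) leaves behind genuine volume ``currents'' of the form $\operatorname{Im}\int_Q(\dots)$ that are not total derivatives and whose size can reach $\tau\lambda^2\phi\,|w|\,|\nabla w|$, i.e.\ larger than the gradient gain. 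The key point, and the delicate bookkeeping of the proof, is that since the full commutator $[L_0,L_1]$ governing $2\operatorname{Re}\langle L_0w|L_1w\rangle$ is self-adjoint, these imaginary currents cancel one another once all the integrations by parts are carried out simultaneously, using crucially that $\partial_t\nabla\psi=0$, hence $\partial_ta=0$ and $\nabla\partial_t\phi=\lambda^2\phi(\partial_t\psi_1)\nabla\psi$; what survives is real, and the residual first order term of size $\tau\lambda^2\phi$ is absorbed by Young's inequality into $\epsilon\tau\lambda\phi|\nabla w|^2+C_\epsilon\tau\lambda^3\phi|w|^2$, the last term being negligible against $\tau^3\lambda^4\phi^3|w|^2$ for $\tau$ large. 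The only boundary term that need not vanish, since $u$ is not assumed to vanish on $\Sigma$, is $\int_\Sigma(\nabla w|\nu)_A\,\overline{\partial_tw}\,d\sigma dt$, and splitting it by Young's inequality with weight $\tau\lambda\phi$ produces exactly the two boundary contributions $\tau\lambda\phi|\nabla u|^2$ and $(\tau\lambda\phi)^{-1}|\partial_tu|^2$ appearing in the statement.

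To conclude I would add $\|L_0w\|_{L^2(Q)}^2$, treat $c$ as a lower order perturbation, fix $\epsilon$, then choose $\lambda$ and then $\tau$ large enough to absorb all lower order volume terms, and finally substitute $w=\Phi^{-1}u$, absorbing the extra gradient terms coming from $\Phi^{-1}\nabla u=-\tau\lambda w\nabla\psi+\nabla w$ exactly as in the wave proof; the passage from $\mathcal{L}_{A,0}^s$ to $\mathcal{L}_A^s$ is then the routine perturbation argument of Corollary~\ref{CarlemanCorollary1}. I expect the main obstacle to be precisely the control of those $i\partial_t$-cross terms: contrary to the other operators treated in the paper, the Schr\"odinger conjugated operator produces volume currents of size $\tau\lambda^2\phi$, larger than the $\tau\lambda\phi$ one gains on the gradient, and one must exploit the cancellations forced by self-adjointness of $[L_0,L_1]$ to dispose of them. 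A compounding difficulty is that the left-hand side of the inequality contains no $\partial_tu$ term (unlike the wave estimate), so $\|\partial_tu\|$ is not available to absorb anything, and everything must be carried by the $\tau^3\lambda^4\phi^3|u|^2$ term, the $\tau\lambda\phi|\nabla u|^2$ term and $\|L_0w\|^2$.
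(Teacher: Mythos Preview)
Your overall strategy is right, and most of the bookkeeping you describe (the pseudo-convexity lower bound on the gradient quadratic form, the $\tau^3\lambda^4\phi^3$ zeroth-order gain from $-\mathrm{div}(aB/2)+ab+\Delta_A(b/2)$, the back-substitution $w=\Phi^{-1}u$, the passage from $\mathcal{L}_{A,0}^s$ to $\mathcal{L}_A^s$) is exactly what the paper does. The gap is in the decomposition itself.

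You place $i\partial_t$ in $L_1$, following the parabolic template where $-\partial_t$ sits in $L_1$. But $-\partial_t$ is skew-adjoint while $i\partial_t$ is self-adjoint, and this difference is decisive. With your choice, the cross term
\[
\Re\langle aw,\,i\partial_tw\rangle=\Im\int_Q a\,w\,\overline{\partial_tw}\,dxdt
\]
is a genuine volume contribution of size $\tau^2\lambda^2\phi^2\,|w|\,|\partial_tw|$. Integration by parts in $t$ only yields the \emph{real} part of $\int a\,w\,\overline{\partial_tw}$ as a total derivative; the imaginary part is $\int a\,(v\partial_tu-u\partial_tv)$ for $w=u+iv$ and does not collapse. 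Since the inequality carries no interior $\partial_tw$ term, there is nothing to absorb it. Your appeal to ``self-adjointness of $[L_0,L_1]$'' does not help here: the identity $2\Re\langle L_0w,L_1w\rangle=\langle[L_0,L_1]w,w\rangle$ (modulo boundary) requires $L_1$ to be skew-adjoint, which fails precisely because the self-adjoint piece $i\partial_t$ has been put inside it. The currents you hope to see cancel do not.

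The paper's remedy is simply to move $i\partial_t$ to the other side:
\[
L_0w=\Delta_Aw+i\partial_tw+aw,\qquad L_1w=(B|\nabla w)+bw,
\]
with $b=-\tau\Delta_A\phi$ real. Now $L_0$ is formally self-adjoint and $L_1$ is purely first order in $x$. The $i\partial_t$ cross terms become $I_3=i\int\partial_tw(\nabla\bar w|B)$ and $I_4=i\int b\,\partial_tw\,\bar w$; integrating by parts in $t$ and in $x$, and using $\mathrm{div}(B)=2b$, one finds
\[
\Re(I_3+I_4)=-\Im\int_Q(\nabla w\,|\,\partial_tB/2)\,\bar w\,dxdt\;+\;\mbox{boundary terms}.
\]
Since $\partial_tB=-2\tau\lambda^2\phi(\partial_t\psi_1)A\nabla\psi_0$, this residual is only of size $\tau\lambda^2\phi\,|w|\,|\nabla w|$ and is absorbed by Young's inequality exactly as you outline. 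Once you make this single change in the splitting, the rest of your plan goes through verbatim; in particular the $(\tau\lambda\phi)^{-1}|\partial_tu|^2$ boundary term now comes from $g_2(w)=\Im\big(w\,\overline{\partial_tw}\,(B/2|\nu)\big)$ on $\Sigma$.
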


\begin{proof} In this proof, $\aleph$, $\lambda_j$, $\tau_j$, $j=1,2,\ldots$, denote positive generic constants only depending on $\mathfrak{d}$. 

As in the preceding section, if $\Phi=e^{-\tau \phi}$, $\tau >0$, then
\begin{align*}
&\partial_k\Phi =-\tau \partial_k\phi \Phi ,
\\
&\partial_{k\ell}\Phi=\left(-\tau \partial^2_{k\ell}\phi +\tau^2\partial_k\phi\partial_\ell\phi\right)\Phi,
\\
&\partial_t\Phi= -\tau \partial_t\phi\Phi  .
\end{align*}
We have, where $w\in H^2 (Q;\mathbb{C})$,
\[
\Phi^{-1}\Delta_A(\Phi w)=\Delta_Aw -2\tau (\nabla w|\nabla \phi)_A+\left[\tau^2  |\nabla \phi|_A^2-\tau\Delta_A\phi\right]w
\]
and
\[
i\Phi^{-1}\partial_t(\Phi w)=i\partial_tw -i\tau \partial_t\phi w.
\]
We decompose $L=\Phi^{-1}\mathcal{L}_{A,0}^s\Phi $ as follows
\[
L=L_0+L_1+c
\]
with
\begin{align*}
&L_0w=\Delta_A w+i\partial_tw+aw,
\\
&L_1w= (B|\nabla w) +bw,
\end{align*}
where we set
\begin{align*}
&a=\tau^2|\nabla \phi|_A^2,
\\
&b=-\tau \Delta_A  \phi ,
\\
&c= i\partial_t\phi,
\\
&B=-2\tau  A\nabla\phi .
\end{align*}
We have
\begin{equation}\label{sc1}
\langle L_0w|L_1w\rangle_{L^2(Q)}=\int_QL_0w\overline{L_1w}dxdt=\sum_{j=1}^6 I_j,
\end{equation}
with
\begin{align*}
&I_1=\int_Q\Delta_A w(\nabla \overline{w}|B) dxdt,
\\
&I_2=\int_Q\Delta_A wb\overline{w}dxdt,
\\
&I_3=i\int_Q\partial_tw(\nabla \overline{w}|B) dxdt,
\\
&I_4=i\int_Q\partial_twb\overline{w} dxdt,
\\
&I_5=\int_Qaw(\nabla \overline{w}|B) dxdt,
\\
&I_6=\int_Qab|w|^2dxdt.
\end{align*}
Some parts of the proof are quite similar to that  of the wave equation and therefore we omit their details. We have
\begin{equation}\label{sc2}
I_1=\int_Q(D\nabla w|\nabla \overline{w})dxdt+\int_\Sigma \left[(\nabla w|\nu)_A(\nabla \overline{w}| B)-(B/2|\nu)|\nabla w|_A^2\right]d\sigma dt,
\end{equation}
where
\[
D=C/2-(B')^t,
\]
with $C=(\mbox{div}(a_{k\ell}B))$.

Also,
\begin{align*}
\Re I_2=\int_Q\Delta_A wb\overline{w}&=-\int_Q b|\nabla w|_A^2dxdt -\Re \int_Q \overline{w}(\nabla b|\nabla w)_A dxdt 
\\
&\hskip 4cm +\Re\int_\Sigma (\nabla w|\nu )_Ab\overline{w} d\sigma dt.
\end{align*}
But $\Re (\overline{w}\nabla w)=\nabla |w|^2/2$. Therefore
\begin{align}
\Re I_2=-\int_Q b|\nabla w|_A^2dxdt &+\int_Q \Delta_A(b/2) |w|^2 dxdt\label{sc3}
\\
& -\int_\Sigma (\nabla (b/2)|\nu)_A|w|^2d\sigma dt +\Re\int_\Sigma (\nabla w|\nu )_Ab\overline{w} d\sigma dt.\nonumber
\end{align}
Let $J=\Re (I_1+I_2)$ and define
\begin{align*}
&\mathcal{A}= D-bA,
\\
&a_1=\Delta_A(b/2),
\\
&g_1(w)=\Re\left[(\nabla w|\nu)_A(\nabla \overline{w}| B) +(\nabla w|\nu )_Ab\overline{w}\right]
\\
&
\hskip 4cm -(\nabla (b/2)|\nu)_A|w|^2-(B/2|\nu)|\nabla w|_A^2.
\end{align*}
We combine \eqref{sc2} and \eqref{sc3} in order to obtain
\begin{equation}\label{sc5}
J=  \int_Q\Re (\mathcal{A}\nabla w|\nabla \overline{w})dxdt +\int_Qa_1|w|^2dxdt +\int_\Sigma g_1(w)d\sigma dt.
\end{equation}
We have once again from the calculations we done for the wave equation
\[
\mathcal{A}=2\tau A\nabla ^2\phi A+\tau \Upsilon_A(\phi(\cdot ,t)).
\]
This identity together with the following ones
\begin{align*}
&\nabla ^2\phi =\lambda^2\phi (\nabla \psi_0\otimes \nabla \psi_0)+\lambda \phi \nabla ^2\psi_0,
\\
&\Upsilon_A(\phi(\cdot ,t))=\lambda \phi \Upsilon_A(\psi_0)
\end{align*}
imply
\[
\mathcal{A}=\Theta_A(\psi_0)+2\tau \lambda^2\phi A(\nabla \psi_0\otimes \nabla \psi_0)A.
\]
As $A(\nabla \psi_0\otimes \nabla \psi_0)A$ is non negative and $\psi_0$ is $A$-pseudo-convex with constant $\kappa >0$, we get
\[
\Re (\mathcal{A}\nabla w|\nabla \overline{w})\ge \kappa \varkappa ^2|\nabla w|^2.
\]
This inequality in \eqref{sc5} yields
\begin{equation}\label{sc5.1}
J\ge  \kappa \varkappa^2\int_Q|\nabla w|^2dxdt +\int_Qa_1|w|^2dxdt +\int_\Sigma g_1(w)d\sigma dt.
\end{equation}
We find, by making an integration by parts with respect to $t$ and then with respect to $x$,
\begin{align*}
\int_Q\partial_tw(\nabla \overline{w}|B)dxdt&=\int_Q\partial_t\overline{w}(\nabla w|B)dxdt-\int_Q\partial_tw\overline{w}\mbox{div}(B)dxdt
\\
&\hskip 2cm +\int_Q(\nabla w|\partial_tB)\overline{w}dxdt
\\
&-\int_\Sigma w\partial_t\overline{w}(B|\nu)d\sigma dt+ \int_\Omega \left[w(\nabla \overline{w}|B)\right]_{t=t_1}^{t_2}dx.
\end{align*}
We then obtain, by noting that $\mbox{div}(B)=2b$,
\begin{align*}
\int_Q\partial_tw(\nabla \overline{w}|B)dxdt&=\int_Q\partial_t\overline{w}(\nabla w|B)dxdt-2\int_Q\partial_tw\overline{w}bdxdt
\\
&\hskip 2cm +\int_Q(\nabla w|\partial_tB)\overline{w}dxdt
\\
&-\int_\Sigma w\partial_t\overline{w}(B|\nu)d\sigma dt+ \int_\Omega \left[w(\nabla \overline{w}|B)\right]_{t=t_1}^{t_2}dx.
\end{align*}
From the identity 
\[
\partial_tw(\nabla \overline{w}|B)-\partial_t\overline{w}(\nabla w|B)=2i\Im[\partial_tw(\nabla \overline{w}|B)]
\]
we deduce that
\begin{align*}
2i\Im\int_Q\partial_tw(\nabla \overline{w}|B) dxdt&= -2\int_Q\partial_tw\overline{w}bdxdt+\int_Q(\nabla w|\partial_tB)\overline{w}dxdt
\\
&-\int_\Sigma w\partial_t\overline{w}(B|\nu)d\sigma dt+ \int_\Omega [w(\nabla \overline{w}|B)]_{t=t_1}^{t_2}dx.
\end{align*}
Or equivalently
\begin{align*}
-\Im\int_Q\partial_tw(\nabla \overline{w}|B) dxdt&= -\Re\left(i\int_Q\partial_tw\overline{w}bdxdt\right)-\Im \int_Q(\nabla w|\partial_tB/2)\overline{w}dxdt
\\
&+\Im\int_\Sigma w\partial_t\overline{w}(B/2|\nu)d\sigma dt-\Im \int_\Omega [w(\nabla \overline{w}|B/2)]_{t=t_1}^{t_2}dx.
\end{align*}
Observing that
\[
\Re I_3=-\Im\int_Q\partial_tw(\nabla \overline{w}|B) dxdt\quad \mbox{and}\quad I_4= i\int_Q\partial_tw\overline{w}bdxdt,
\]
we obtain
\begin{equation}\label{sc6}
\Re (I_3+I_4)= -\Im \int_Q(\nabla w|\partial_tB/2)\overline{w}dxdt +\int_\Sigma g_2(w)d\sigma dt+\int_\Omega h(w)dx, 
\end{equation}
with
\begin{align*}
&g_2(w)=\Im (w\partial_t\overline{w}(B/2|\nu)),
\\
&h(w)=-\Im\left( \left[w(\nabla \overline{w}|B/2)\right]_{t=t_1}^{t_2}\right).
\end{align*}
We find, by using once again the identity $\Re w\nabla \overline{w}=\nabla |w|^2/2$,
\[
\Re I_5=-\int_Q \mbox{div}(aB/2)|w|^2dxdt+\int_\Sigma a(B/2|\nu)|w|^2d\sigma dt
\]
and hence
\begin{equation}\label{sc7}
\Re (I_5+I_6)=\int_Qa_2|w|^2dxdt+\int_\Sigma g_3(w)d\sigma dt,
\end{equation}
with
\begin{align*}
&a_2=-\mbox{div}(aB/2)+ab,
\\
&g_3(w)= a(B/2|\nu)|w|^2.
\end{align*}
Let
\[
\mathfrak{a}=a_1+a_2=-\mbox{div}(aB/2)+ab+\Delta_A(b/2).
\]
We can carry out the same calculations as for the wave equation in order to obtain
\[
\mathfrak{a}\ge \tau ^3\lambda^4\phi ^3\delta ^4,\quad \lambda \ge \lambda_1,\; \tau \ge \tau_1.
\]
We end up getting, by combining \eqref{sc1}, \eqref{sc5.1}, \eqref{sc6} and \eqref{sc7}, the following inequality
\begin{align}
\Re \langle L_0w|L_1w\rangle_{L^2(Q)}&\ge \tau \lambda \kappa  \int_Q|\nabla w|^2dxdt +\tau ^3\lambda^4\delta ^4\int_Q\phi ^3|w|^2dxdt \label{sc8}
\\
&-\Im \int_Q(\nabla w|\partial_tB/2)\overline{w}dxdt +\int_\Sigma g(w)d\sigma dt+\int_\Omega h(w)dx,\nonumber
\end{align}
where we set $g=g_1+g_2$.

Let $\epsilon >0$. Then an elementary convexity inequality yields
\[
|(\nabla w|\partial_tB)\overline{w}|\le \aleph \phi (\epsilon \tau \lambda[\nabla w|^2+\epsilon^{-1}\tau \lambda ^2|w|^2).
\]
In consequence the third term in \eqref{sc8} can be absorbed by the first two ones, provided that $\lambda \ge \lambda_2$ and $\tau \ge \tau_2$. That is we have
\begin{align*}
\Re \langle L_0w|L_1w\rangle_{L^2(Q)}&\ge \tau \lambda \kappa  \int_Q|\nabla w|^2dxdt +\tau ^3\lambda^4\delta ^4\int_Q\phi ^3|w|^2dxdt 
\\
&+\int_\Sigma g(w)d\sigma dt+\int_\Omega h(w)dx.\nonumber
\end{align*}
The rest of the proof is almost similar to that of the wave equation.
\end{proof}

Recall that $\Gamma_+=\Gamma_+^{\psi_0}=\{x\in \Gamma ;\; \partial_{\nu_A}\psi_0>0\}$ and $\Sigma_+=\Sigma_+^\psi=\Gamma_+\times (t_1,t_2)$. As for the wave equation we have 

\begin{theorem}\label{CarlemanTheoremSch2}
There exist three constants $\aleph=\aleph(\mathfrak{d})$, $\lambda^\ast=\lambda^\ast(\mathfrak{d})$ and $\tau^\ast =\tau^\ast(\mathfrak{d})$ so that, for any $\lambda \ge \lambda^\ast$, $\tau \ge \tau^\ast$ and $u\in H^{2,1}(Q,\mathbb{C})$ satisfying $u=0$ on $\Sigma$ and $u(\cdot ,t)=0$, $t\in \{t_1,t_2\}$, we have
\begin{align}
&\aleph\int_Qe^{2\tau \phi}\left[\tau^3\lambda ^4\phi^3 |u|^2+\tau \lambda  \phi |\nabla u|^2\right]dxdt \label{C1.1}
\\
&\hskip1cm \le \int_Qe^{2\tau \phi}\left|\mathcal{L}_A^su\right|^2dxdt +\tau \lambda \int_{\Sigma_+} e^{2\tau \phi}\phi |\partial_\nu u|^2 d\sigma dt,\nonumber
\end{align}
\end{theorem}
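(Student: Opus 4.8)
The plan is to rerun the proof of Theorem \ref{CarlemanTheoremSch1}, keeping track of the boundary contributions and using the vanishing hypotheses to reduce them to a single term supported on $\Sigma_+$, in exactly the way Theorem \ref{CarlemanTheorem1.2} was deduced from Theorem \ref{CarlemanTheorem1}. Recall that in the proof of Theorem \ref{CarlemanTheoremSch1} one puts $\Phi=e^{-\tau\phi}$, $w=\Phi^{-1}u$, and that the inequality obtained just before the final absorption step is \eqref{sc8}, whose boundary contributions are $\int_\Sigma g(w)\,d\sigma dt$ with $g=g_1+g_2+g_3$ and the endpoint contribution $\int_\Omega h(w)\,dx$ coming from \eqref{sc6}. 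When $u=0$ on $\Sigma$ and $u(\cdot,t)=0$ for $t\in\{t_1,t_2\}$, the conjugated function $w$ vanishes on $\Sigma$ and at $t=t_1,t_2$. Hence $h(w)=0$, so the endpoint term disappears; and $g_2(w)=\Im(w\partial_t\overline{w}(B/2|\nu))$ and $g_3(w)=a(B/2|\nu)|w|^2$ vanish identically since they carry explicit factors of $w$. (In particular the auxiliary boundary term $\int_\Sigma e^{2\tau\phi}(\tau\lambda\phi)^{-1}|\partial_tu|^2\,d\sigma dt$ of Theorem \ref{CarlemanTheoremSch1}, which is produced only by a Cauchy--Schwarz estimate of $g_2$, is not generated here.)

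It remains to simplify $g_1(w)$ on $\Sigma$. Since $w=0$ on $\Sigma$ the tangential gradient of $w$ vanishes there, so $\nabla w=(\partial_\nu w)\nu$ on $\Sigma$, and the terms of $g_1$ carrying $\overline{w}$ or $|w|^2$ drop out. Using $B=-2\tau A\nabla\phi=-2\tau\lambda\phi A\nabla\psi_0$ together with $(\nu|\nabla\psi_0)_A=(\nabla\psi_0|\nu)_A=\partial_{\nu_A}\psi_0$, a short computation identical to the one leading to \eqref{id1} gives
\begin{equation*}
g_1(w)=-\tau\lambda\phi\,|\partial_\nu w|^2\,|\nu|_A^2\,\partial_{\nu_A}\psi_0\qquad\text{on }\Sigma,
\end{equation*}
so that $\int_\Sigma g(w)\,d\sigma dt=-\tau\lambda\int_\Sigma\phi\,|\partial_\nu w|^2\,|\nu|_A^2\,\partial_{\nu_A}\psi_0\,d\sigma dt$. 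Splitting $\Sigma=\Sigma_+\cup(\Sigma\setminus\Sigma_+)$, on $\Sigma\setminus\Sigma_+$ one has $\partial_{\nu_A}\psi_0\le0$, so that part of $-\int_\Sigma g(w)$ is non-positive and may be dropped, while the $\Sigma_+$ part is bounded by $\aleph\,\tau\lambda\int_{\Sigma_+}\phi\,|\partial_\nu w|^2\,d\sigma dt$ and kept on the right-hand side.

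Finally I would substitute $w=e^{\tau\phi}u$ back. Since $u=0$ on $\Sigma$ one has $\nabla u=\Phi\nabla w$ there, whence $\partial_\nu u=e^{-\tau\phi}\partial_\nu w$ on $\Sigma$ (the analogue of \eqref{id2}) and $\phi\,|\partial_\nu w|^2=e^{2\tau\phi}\phi\,|\partial_\nu u|^2$; the extra interior terms produced by this substitution, coming from $\nabla w=e^{\tau\phi}(\tau\lambda\phi\,u\nabla\psi_0+\nabla u)$ and $\partial_tw=e^{\tau\phi}(\tau\lambda\phi\,u\,\partial_t\psi_1+\partial_tu)$, are absorbed by the left-hand side once $\lambda$ and $\tau$ are large, exactly as at the end of the proof of Theorem \ref{CarlemanTheoremSch1}. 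This yields the stated estimate with $\mathcal{L}_{A,0}^s$ in place of $\mathcal{L}_A^s$ on the right; to put $\mathcal{L}_A^s$ there one uses $|\mathcal{L}_{A,0}^su|^2\le 2|\mathcal{L}_A^su|^2+\aleph(\varkappa|\nabla u|_A^2+|u|^2)$ and absorbs the lower-order part into the left-hand side, as in Corollary \ref{CarlemanCorollary1}; a density argument then extends the inequality from smooth $u$ to all $u\in H^{2,1}(Q,\mathbb{C})$ satisfying the two boundary conditions. The only point needing genuine care is the bookkeeping of $g(w)$ on $\Sigma$: one must verify that every term of $g_1$ except the $|\partial_\nu w|^2$ one really vanishes and that the surviving one has the sign displayed above, so that $\Gamma\setminus\Gamma_+$ contributes nothing adverse — everything else is a routine repetition of computations already performed in the excerpt.
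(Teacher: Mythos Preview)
Your proposal is correct and follows exactly the route the paper intends: the paper does not write out a separate proof of Theorem \ref{CarlemanTheoremSch2} but simply says ``As for the wave equation we have'', meaning one should redo the derivation of Theorem \ref{CarlemanTheorem1.2} from Theorem \ref{CarlemanTheorem1} in the Schr\"odinger setting, tracking the boundary terms $g_1,g_2,g_3,h$ of the proof of Theorem \ref{CarlemanTheoremSch1} and simplifying them under the hypotheses $w=0$ on $\Sigma$ and $w(\cdot,t_j)=0$ to the single surviving term $-\tau\lambda\phi|\partial_\nu w|^2|\nu|_A^2\partial_{\nu_A}\psi_0$, exactly as you do. The only superfluous remark is the final density step: the computations in the proof of Theorem \ref{CarlemanTheoremSch1} are already carried out for $w\in H^{2,1}(Q;\mathbb{C})$, so no approximation is needed.
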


\subsection{Unique continuation}

In this subsection, $t_1=-\mathfrak{t}$ and $t_2=\mathfrak{t}$, where $\mathfrak{t}>0$ is fixed. We recall that
\begin{align*}
E_+(\tilde{x},c)=\{x=(x',x_n)\in \mathbb{R}^{n-1}\times \mathbb{R} ;\; 0\le x_n-&\tilde{x}_n<c
\\
&\mbox{and}\; x_n-\tilde{x}_n\ge |x'-\tilde{x}'|^2/c\},
\end{align*}
with $\tilde{x}\in \Omega$, $c>0$.

\begin{theorem}\label{theoremUCs1}
Suppose that $B(\tilde{x},r)\Subset \Omega$, for some $r>0$. There exists $c^\ast=c^\ast (\varkappa,\mathfrak{m})$ with the property that, for any $0<c<c^\ast$, we find $0<\rho=\rho(c,\varkappa)<r$  so that if $u\in H^{2,1}(Q)$ satisfies $\mathcal{L}_A^su=$ in $Q$ and $\mbox{supp}(u(\cdot ,t))\cap  B(\tilde{x},r)\subset E_+(\tilde{x},c)$, $t\in (-\mathfrak{t},\mathfrak{t})$, then $u=0$ in $B(\tilde{x},\rho)\times (-\mathfrak{t}/2,\mathfrak{t}/2)$.
\end{theorem}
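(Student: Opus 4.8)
The plan is to mimic the proof of Theorem \ref{theoremUCw1} almost verbatim, the only differences being that the wave Carleman estimate is replaced by the Schr\"odinger one of Theorem \ref{CarlemanTheoremSch1}, that the time cut-off now produces only a first order error term (since $\mathcal{L}_{A,0}^s$ has a single time derivative), and — crucially — that the notion of weight function for $\mathcal{L}_{A,0}^s$ in Definition \ref{definitionCW}(c) requires only $A$-pseudo-convexity of $\psi_0$, with no analogue of the conditions \eqref{int1}--\eqref{int2}. This last point is exactly what removes any lower bound on $\mathfrak t$ and lets us reach the full interval $(-\mathfrak t/2,\mathfrak t/2)$.

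First I would fix the geometry as in Theorem \ref{theoremUCw1}: set $x_0=\tilde x+c\,\mathfrak e_n$, with $\mathfrak e_n=(0,1)\in\mathbb R^{n-1}\times\mathbb R$, and $\psi_0(x)=|x-x_0|^2/2$. The computation of Example \ref{example1} shows that for $c$ below a threshold $c^\ast=c^\ast(\varkappa,\mathfrak m)$ and on a ball $B(\tilde x,r_0)$ with $0<r_0\le\min(r,c/2)$ small enough, one has $(\Theta_A(\psi_0)(x)\xi|\xi)\ge\varkappa^2|\xi|^2$ and $\nabla\psi_0(x)=x-x_0\ne0$ there, so $\psi_0$ is $A$-pseudo-convex in $B(\tilde x,r_0)$ with a constant $\kappa=\kappa(c,\varkappa)>0$. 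As in the wave case one checks that $(E_+(\tilde x,c)\cap B(\tilde x,r_0))\setminus\{\tilde x\}\subset\{\psi_0<\psi_0(\tilde x)=c^2/2\}$ and that $\psi_0\le c^2/2$ on $\overline{E_+(\tilde x,c)\cap B(\tilde x,r_0)}$, and one fixes $\chi\in C_0^\infty(B(\tilde x,r_0))$ with $\chi=1$ on $B(\tilde x,\rho_1)$, a number $\epsilon>0$ with $E_+(\tilde x,c)\cap(B(\tilde x,r_0)\setminus\overline B(\tilde x,\rho_1))\subset\{\psi_0<c^2/2-\epsilon\}$, and $0<\rho_0<\rho_1$ with $E_+(\tilde x,c)\cap B(\tilde x,\rho_0)\subset\{\psi_0>c^2/2-\epsilon/2\}$. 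The conclusion will hold with $\rho=\rho_0$, which as in Theorem \ref{theoremUCw1} can be taken to depend only on $c$ and $\varkappa$.

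Next, set $\psi(x,t)=\psi_0(x)-\gamma t^2/2+C$ with $\gamma>0$ chosen below and $C$ large enough that $\psi\ge0$; by Definition \ref{definitionCW}(c), $\phi=e^{\lambda\psi}$ is a weight function for $\mathcal{L}_{A,0}^s$ on $B(\tilde x,r_0)\times(-\mathfrak t,\mathfrak t)$ for \emph{every} value of $\gamma$, since $\gamma$ plays no role in the pseudo-convexity of $\psi_0$. Pick $\vartheta\in C_0^\infty((-\mathfrak t,\mathfrak t))$ with $\vartheta=1$ on $[-3\mathfrak t/4,3\mathfrak t/4]$, put $v=\chi\vartheta u$, and use $\mathcal{L}_A^su=0$ to get $\mathcal{L}_A^sv=f_1+f_2$, where $f_1=\vartheta\big(2(\nabla\chi|\nabla u)_A+u\,\mathrm{div}(A\nabla\chi)+u\sum_\ell p_\ell\partial_\ell\chi\big)$ is supported in $\mathbf Q_1=\{E_+(\tilde x,c)\cap(B(\tilde x,r_0)\setminus\overline B(\tilde x,\rho_1))\}\times(-\mathfrak t,\mathfrak t)$ and $f_2=i\chi\vartheta'u$ is supported in $\mathbf Q_2=\{B(\tilde x,r_0)\cap E_+(\tilde x,c)\}\times\{3\mathfrak t/4\le|t|<\mathfrak t\}$. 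Applying Theorem \ref{CarlemanTheoremSch1} to $v$, with $\Omega$ replaced by $B(\tilde x,r_0)$ (all boundary integrals, including the one in $|\partial_tv|^2$ over $\Sigma$, vanish since $v$ and its first derivatives vanish near the parabolic boundary), and bounding $e^{\lambda\psi}$ from below by $c_0:=e^{\lambda(c^2/2-\epsilon/2-\gamma\mathfrak t^2/8+C)}$ on $\mathbf Q_0:=\{B(\tilde x,\rho_0)\cap E_+(\tilde x,c)\}\times(-\mathfrak t/2,\mathfrak t/2)$ (where $v=u$), from above by $c_1:=e^{\lambda(c^2/2-\epsilon+C)}$ on $\mathbf Q_1$, and from above by $c_2:=e^{\lambda(c^2/2-9\gamma\mathfrak t^2/32+C)}$ on $\mathbf Q_2$, one finds that the nonempty range $16\epsilon/(5\mathfrak t^2)<\gamma<4\epsilon/\mathfrak t^2$ makes $c_1<c_0$ and $c_2<c_0$ simultaneously. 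Discarding the gradient term on the left of the Carleman inequality then gives, for $\lambda\ge\lambda^\ast$ fixed and all $\tau\ge\tau^\ast$,
\[
\int_{\mathbf Q_0}|u|^2\,dx\,dt\le\aleph\,\tau^{-3}\Big(e^{-\tau(c_0-c_1)}\|f_1\|_{L^2(\mathbf Q_1)}^2+e^{-\tau(c_0-c_2)}\|f_2\|_{L^2(\mathbf Q_2)}^2\Big),
\]
and letting $\tau\to\infty$ forces $u=0$ on $\mathbf Q_0$, i.e. $u=0$ in $B(\tilde x,\rho_0)\times(-\mathfrak t/2,\mathfrak t/2)$.

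The argument is thus structurally identical to the wave case, and the only genuinely new ingredient is the bookkeeping of the three weight levels in the last step: one must check that a single nonempty range of $\gamma$ achieves both $c_1<c_0$ and $c_2<c_0$ \emph{regardless of the size of $\mathfrak t$}. This is precisely where the weaker notion of weight function for the Schr\"odinger operator is used — in the wave case the conditions \eqref{int1}--\eqref{int2} force $\gamma$ to be of order $1/\mathfrak t$, which is why one there needs $\mathfrak t$ large and only reaches a shrunken time interval, whereas here $\gamma$ of order $\epsilon/\mathfrak t^2$ is admissible and no such restriction is incurred. Everything else — the pseudo-convexity computation, the commutator identities for $v=\chi\vartheta u$, and the tracking of the dependence of $\rho$ on $c$ and $\varkappa$ — is routine and parallels Theorem \ref{theoremUCw1}.
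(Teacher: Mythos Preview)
Your proof is correct and follows essentially the same approach as the paper's. The only minor difference is that the paper introduces an auxiliary parameter $0<\eta<1$, first proves vanishing on $(-\eta\mathfrak t/2,\eta\mathfrak t/2)$ via the range $4\epsilon/(3\eta^2\mathfrak t^2)<\gamma<4\epsilon/(\eta^2\mathfrak t^2)$, and then lets $\eta\to1$; your choice of cutoff $\vartheta=1$ on $[-3\mathfrak t/4,3\mathfrak t/4]$ together with the range $16\epsilon/(5\mathfrak t^2)<\gamma<4\epsilon/\mathfrak t^2$ reaches $(-\mathfrak t/2,\mathfrak t/2)$ in one step, which is a slight streamlining of the same argument.
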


\begin{proof}
We proceed similarly to the proof of Theorem \ref{theoremUCw1}. We keep the same notations as in Theorem \ref{theoremUCw1}. Let $c^\ast =c^\ast (\varkappa,\mathfrak{m})$ defined as in Theorem \ref{theoremUCw1} and $0<c<c^\ast$. 

Fix $0<\eta <1$ and take instead of $\mathbf{Q}_j$, $j=0,1,2$, in Theorem \ref{theoremUCw1} the following sets
\begin{align*}
&\mathbf{Q}_0=[B(\tilde{x},\rho_0)\cap E_+]\times (-\eta \mathfrak{t}/2,\eta \mathfrak{t}/2),
\\
&\mathbf{Q}_1= \left\{E_+\cap\left[B(\tilde{x},r_0)\setminus \overline{B}(\tilde{x},\rho_1)\right]\right\}\times (-\mathfrak{t},\mathfrak{t}),
\\
&\mathbf{Q}_2=[B(\tilde{x},r_0)\cap E_+]\times [(-\mathfrak{t},- \eta \mathfrak{t})\cup (\eta \mathfrak{t},\mathfrak{t})].
\end{align*}
Also, the constants $c_j$, $j=0,1,2$ are  substituted by the following ones
\begin{align*}
&c_0=e^{\lambda (c^2/2-\epsilon/2-\gamma \eta^2 \mathfrak{t}^2/8+\delta)}\quad \mbox{in}\; \mathbf{Q}_0,
\\
&c_1=e^{\lambda (c^2/2-\epsilon+\delta )}\quad \mbox{in}\; \mathbf{Q}_1,
\\
&c_2=e^{\lambda (c^2/2-\gamma\eta^2 \mathfrak{t}^2/2+\delta )}\quad \mbox{in}\; \mathbf{Q}_2.
\end{align*}
Straightforward computations show that choosing $\gamma$ so that 
\[
\frac{4\epsilon}{3\eta^2\mathfrak{t}^2}<\gamma <\frac{4\epsilon}{\eta^2\mathfrak{t}^2}
\]
guarantee that $c_1<c_0$ and $c_2<c_0$. We can then mimic the last part of Theorem \ref{theoremUCw1} to derive that if $u\in H^{2,1}(Q)$ satisfies $\mathcal{L}_A^su=$ in $Q$ and $\mbox{supp}(u(\cdot ,t))\cap  B(\tilde{x},r)\subset E_+(\tilde{x},c)$, $t\in (-\mathfrak{t},\mathfrak{t})$, then $u=0$ in $B(\tilde{x},\rho_0)\times (-\eta \mathfrak{t}/2,\eta \mathfrak{t}/2)$. Since $0<\eta <1$ is chosen arbitrarily we get, as expected, $u=0$ in $B(\tilde{x},\rho_0)\times (-\mathfrak{t}/2, \mathfrak{t}/2)$.
\end{proof}

We  say that $\mathcal{L}_A^s$ has the reduced unique continuation property if, for any non empty open subset $\mathcal{O}\subset \Omega$ and for any $u\in H^{2,1}(Q)$ satisfying $\mathcal{L}_A^su=0$ in $Q$ and $u=0$ in $\mathcal{O} \times (-\mathfrak{t},\mathfrak{t})$, we must have $u=0$ in $\Omega \times (-\mathfrak{t}/2,\mathfrak{t}/2)$.

\begin{theorem}\label{theoremUCs2}
There exists a neighborhood $\mathcal{N}$ of $\mathbf{I}$ in $C^{2,1}(\overline{\Omega};\mathbb{R}^n\times\mathbb{R}^n)$ so that $\mathcal{L}_A^s$ has the reduced unique continuation property for any $A\in \mathcal{N}$.
\end{theorem}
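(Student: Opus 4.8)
The plan is to follow the scheme of the proof of Theorem~\ref{theoremUCw2}, with the Schr\"odinger propagation result Theorem~\ref{theoremUCs1} replacing the wave one Theorem~\ref{theoremUCw1}, and then to iterate the local step across $\Omega$. First I would take $\mathcal{N}$ to be the neighbourhood $\mathcal{N}_0$ of $\mathbf{I}$ furnished by Lemma~\ref{lemmaPCH}, shrunk if necessary so that, after a translation, an orthogonal change of coordinates and the flattening diffeomorphism $\varphi$ of \eqref{PCH0}, the transformed matrix $A_H$ has ellipticity constant at least $1/4$ and the constant $c^\ast$ of Theorem~\ref{theoremUCs1} (computed with this $\varkappa=1/4$) exceeds $1$. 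This makes Theorem~\ref{theoremUCs1}, applied with $c=1$, available at every point of $\Omega$ exactly as in the proof of Theorem~\ref{theoremUCw2}. Fix $A\in\mathcal{N}$, a nonempty open $\mathcal{O}\subset\Omega$ and $u\in H^{2,1}(Q)$ with $\mathcal{L}_A^su=0$ in $Q$ and $u=0$ in $\mathcal{O}\times(-\mathfrak t,\mathfrak t)$.

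Pick an arbitrary $x_0\in\Omega$ and join it to a point of $\mathcal{O}$ by a path contained in $\Omega$; being compact the path lies at positive distance from $\Gamma$, so it can be covered by finitely many balls $V_0,\dots,V_N$ with $V_0\subset\mathcal{O}$, $x_0\in V_N$, consecutive balls overlapping, and radii bounded below along the chain. I would propagate the vanishing of $u$ along this chain: at the $j$-th step $u=0$ on an open set $W_{j-1}$ (with $W_0\subset\mathcal{O}$) for all $t$ in an interval $I_{j-1}$; one chooses, exactly as in the proof of Theorem~\ref{theoremUCw2}, a ball $B(y_{j-1},d)\subset W_{j-1}$ whose boundary touches a point $z_j$ of $\partial W_{j-1}$ lying on the path, one translates and rotates so that $z_j=0$ and $\partial B(y_{j-1},d)$ becomes a convex hypersurface with $B(y_{j-1},d)$ on one side, one replaces $A$ by $A_H$ via $\varphi$ and checks that $v(y,\cdot)=u(\varphi^{-1}(y),\cdot)$ solves $\mathcal{L}^s_{\tilde A}v=0$ with $\mbox{supp}(v(\cdot ,t))$ contained in a cap $E_+(0,1)$ for every $t\in I_{j-1}$. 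Theorem~\ref{theoremUCs1} (or rather its proof, see below) then gives $v=0$, hence $u=0$, on a full neighbourhood of $z_j$ in $\Omega$ for $t$ in a slightly smaller interval $I_j\subset I_{j-1}$; adjoining this neighbourhood to $W_{j-1}$ yields $W_j$. After $N$ steps $u$ vanishes near $x_0$ in $\Omega$ for all $t\in I_N$.

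The single point that is not a verbatim transcription of the wave argument is the accounting of the shrinking intervals $I_0=(-\mathfrak t,\mathfrak t)\supset I_1\supset\dots\supset I_N$: with the quadratic temporal weight used in the proof of Theorem~\ref{theoremUCs1} a single step loses half of the interval, so chaining would shrink it geometrically and only reach $(-\mathfrak t/2^{N},\mathfrak t/2^{N})$. The way around this is that for the Schr\"odinger operator the weight $\phi=e^{\lambda\psi}$ is subject to no constraint on $\psi_1$ at all (Definition~\ref{definitionCW}(c)), so one may re-run the proof of Theorem~\ref{theoremUCs1} with $\psi_1$ chosen almost constant on a large central subinterval of $(-\mathfrak t,\mathfrak t)$ and decaying only near the endpoints; then the loss $|I_{j-1}\setminus I_j|$ at each step can be made as small as one wishes, and since $N=N(x_0)$ is finite one distributes a total loss strictly less than $\mathfrak t/2$ over the $N$ steps, so that $I_N\supset(-\mathfrak t/2,\mathfrak t/2)$. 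Thus $u=0$ near $x_0$ in $\Omega$ on $(-\mathfrak t/2,\mathfrak t/2)$, and as $x_0$ was arbitrary, $u=0$ in $\Omega\times(-\mathfrak t/2,\mathfrak t/2)$. I expect this interval bookkeeping, rather than any genuinely new estimate, to be the main obstacle; everything else is the proof of Theorem~\ref{theoremUCw2} with the obvious changes.
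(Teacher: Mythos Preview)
Your proof is correct, and in one respect it is more careful than the paper's. The paper argues by maximality: it takes $\Omega_0$ to be the largest open set on which $u$ vanishes on $\Omega_0\times(-\mathfrak{t}/2,\mathfrak{t}/2)$ and then invokes the local step (Theorem~\ref{theoremUCs1}, via the flattening of Theorem~\ref{theoremUCw2}) at a boundary point of $\Omega_0$ to contradict maximality. Taken literally this has the same time-interval difficulty you flag: Theorem~\ref{theoremUCs1} as stated needs vanishing on the full $(-\mathfrak{t},\mathfrak{t})$ and returns only $(-\mathfrak{t}/2,\mathfrak{t}/2)$, so one cannot feed $\Omega_0\times(-\mathfrak{t}/2,\mathfrak{t}/2)$ back into it and recover the same interval. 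Your chain-of-balls argument, together with the observation that Definition~\ref{definitionCW}(c) places no constraint on $\psi_1$, cleanly avoids this.

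Two remarks on the comparison. First, your key observation can be pushed a bit further and then makes the paper's maximality argument work as well: in the proof of Theorem~\ref{theoremUCs1} the spatial radius $\rho_0$ is determined by $\psi_0$ alone, so for \emph{every} $a'<a$ one gets vanishing on $B(\tilde{x},\rho_0)\times(-a',a')$ with the \emph{same} $\rho_0$; taking the union over $a'$ gives vanishing on $B(\tilde{x},\rho_0)\times(-a,a)$, i.e.\ the local step has no time loss at all. With this, the paper's maximality argument goes through verbatim (and in fact yields $u=0$ on $\Omega\times(-\mathfrak{t},\mathfrak{t})$). Second, in your chain version the same uniformity is what makes the bookkeeping honest: since $\rho_0$ does not shrink when you shrink the per-step time loss, the number $N(x_0)$ of steps is fixed before you choose the losses, so distributing a total loss under $\mathfrak{t}/2$ is legitimate. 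Either packaging is fine; yours has the advantage of making the role of the free temporal weight explicit.
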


\begin{proof}
Let $\mathcal{N}$ be the neighborhood of $\mathbf{I}$ in $C^{2,1}(\overline{\Omega},\mathbb{R}^{n\times n})$ given in Lemma \ref{lemmaPCH}. Pick $u\in H^{2,1}(Q)$ satisfying $\mathcal{L}_A^su=0$ in $Q$ and $u=0$ in $\mathcal{O} \times (-\mathfrak{t},\mathfrak{t})$ for some non empty open subset $\mathcal{O}\subset \Omega$. Define $\Omega_0$ as the maximal subdomain of $\Omega$ so that $u=0$ in $\Omega_0\times (-\mathfrak{t}/2,\mathfrak{t}/2)$. We claim that $\Omega \setminus \overline{\Omega_0}$ is empty which is sufficient to give the expected result. Indeed if $\Omega \setminus \overline{\Omega_0}$ is nonempty then we can proceed as in the proof of Theorem \ref{theoremUCw2} to derive that $u$ vanishes in $\mathcal{U}\times (-\mathfrak{t}/2,\mathfrak{t}/2)$, for some $\mathcal{U}$, a neighborhood of a point in $\partial \Omega_0\cap \Omega$. But this contradicts the maximality of $\Omega_0$.
\end{proof}

The uniqueness of continuation from  the Cauchy data on a subboundary is given in the  following corollary.

\begin{corollary}\label{corollaryUCs1}
Let $\mathcal{N}$ be as in Theorem \ref{theoremUCs2} with $\Omega$ substituted by larger domain $\hat{\Omega}\Supset \Omega$. Let $\Gamma_0$ a nonempty open subset of $\Gamma$ and $\Sigma_0=\Gamma_0\times (-\mathfrak{t},\mathfrak{t})$. For $A\in \mathcal{N}$, let $u\in H^{2,1}(Q)$ satisfying $\mathcal{L}_A^wu=0$ in $Q$ and $u=\partial_\nu u=0$ on $\Sigma_0$. Then $u=0$ in $\Omega \times (-\mathfrak{t}/2,\mathfrak{t}/2)$.
\end{corollary}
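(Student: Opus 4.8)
The plan is to reduce the statement to the reduced unique continuation property of Theorem \ref{theoremUCs2} by a zero-extension argument across the observed part of the boundary, in the same spirit as the proof of Corollary \ref{corollaryUCw1} for the wave operator and Corollary \ref{corollaryUCE1} for the elliptic operator.

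First I would fix $A\in \mathcal{N}$ and a solution $u\in H^{2,1}(Q)$ with $\mathcal{L}_A^su=0$ in $Q$ and $u=\partial_\nu u=0$ on $\Sigma_0$. Since $\mathcal{L}_A^s$ is by hypothesis defined on the larger domain $\hat{\Omega}\Supset \Omega$, one can select a point of $\Gamma_0$ and a neighborhood $\mathcal{V}\subset \hat{\Omega}$ of this point, small enough that $\Omega':=\Omega\cup \mathcal{V}\Subset \hat{\Omega}$. Let $\hat{u}$ denote the extension of $u$ by zero to $\mathbb{R}^n\setminus\overline{\Omega}$. Because $\Gamma$ is Lipschitz and the Cauchy data $(u,\partial_\nu u)$ of $u$ vanish on $\Sigma_0$, the function $\hat{u}$ belongs to $H^{2,1}(\Omega'\times(-\mathfrak{t},\mathfrak{t}))$, satisfies $\mathcal{L}_A^s\hat{u}=0$ in $\Omega'\times(-\mathfrak{t},\mathfrak{t})$, and vanishes in $[(\Omega'\setminus\overline{\Omega})\cap\mathcal{V}]\times(-\mathfrak{t},\mathfrak{t})$, which is a nonempty open subset of $\Omega'\times(-\mathfrak{t},\mathfrak{t})$. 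Here I use that the restriction of $A\in\mathcal{N}$ to $\overline{\Omega'}$ still lies in the neighborhood of $\mathbf{I}$ produced by Theorem \ref{theoremUCs2} relative to the domain $\Omega'$; this is exactly why $\mathcal{N}$ is taken to be the neighborhood attached to the larger domain $\hat{\Omega}$, together with the fact that such neighborhoods only shrink when the ambient domain grows.

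Next I would apply Theorem \ref{theoremUCs2}, with $\Omega$ replaced by $\Omega'$ and $\mathcal{O}=(\Omega'\setminus\overline{\Omega})\cap\mathcal{V}$, to the function $\hat{u}$. This yields $\hat{u}=0$ in $\Omega'\times(-\mathfrak{t}/2,\mathfrak{t}/2)$, hence in particular $u=0$ in $\Omega\times(-\mathfrak{t}/2,\mathfrak{t}/2)$, which is the claimed conclusion.

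The only genuinely delicate point is the first step: verifying that the zero-extension $\hat{u}$ has the asserted $H^{2,1}$-regularity across $\Gamma_0$. This is a standard gluing property for anisotropic Sobolev functions over Lipschitz domains, and it relies crucially on the simultaneous vanishing of the trace of $u$ and of its conormal derivative on $\Sigma_0$. Once this regularity is granted, the remainder is a direct invocation of already established results, so I would only sketch this point, referring to the completely analogous arguments used in the proofs of Corollaries \ref{corollaryUCw1}, \ref{corollaryUCE1} and \ref{corollaryUCP1}.
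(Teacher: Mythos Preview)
Your proposal is correct and follows exactly the approach the paper intends: the zero-extension across $\Gamma_0$ followed by an application of Theorem \ref{theoremUCs2} on the enlarged domain, just as in the proofs of Corollaries \ref{corollaryUCw1}, \ref{corollaryUCE1} and \ref{corollaryUCP1}. The paper does not spell out a proof for this corollary, but your argument is precisely the one implied by those parallel results; note also that the operator in the statement should read $\mathcal{L}_A^s$ rather than $\mathcal{L}_A^w$, a typo you rightly corrected.
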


Also, the unique continuation across a $A$-pseudo-convex hypersurface is contained in the following theorem.

\begin{theorem}\label{theoremUCw3}
Let $H=\{x\in \omega ;\;\theta (x)=\theta (\tilde{x})\}$ be a $A$-pseudo-convex hypersurface defined in a neighborhood of $\tilde{x}\in \Omega$ with $\theta \in C^{3,1}(\overline{\omega})$. Then there exists $\mathcal{B}$, a neighborhood of $\tilde{x}$, so that if $u\in H^{2,1}(\omega \times (-\mathfrak{t},\mathfrak{t}))$ satisfies $\mathcal{L}_A^wu=0$ in $\omega\times(-\mathfrak{t},\mathfrak{t})$ and 
$\mbox{supp}(u(\cdot ,t))\subset H_+=\{ x\in \omega ;\; \theta (x)\ge \theta (\tilde{x})\}$, $t\in (-\mathfrak{t},\mathfrak{t})$,  then $u=0$ in $\mathcal{B}\times (-\mathfrak{t}/2,\mathfrak{t}/2)$.
\end{theorem}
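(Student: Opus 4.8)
The plan is to reduce, by means of the flattening construction of Subsection~\ref{subsectionPCH}, the problem to unique continuation across the model convex hypersurface $\{y_n=|y'|^2\}$, and then to run the scheme of Theorems~\ref{theoremUCw1} and~\ref{theoremUCw2}. After a translation and an orthogonal change of coordinates I may assume $\tilde{x}=0$, $\theta(0)=0$, $\nabla'\theta(0)=0$ and $\partial_n\theta(0)\ne0$, so that near $0$ the hypersurface $H$ is the graph $x_n=\vartheta(x')$ with $\vartheta(0)=0$ and $\nabla'\vartheta(0)=0$. Let $\varphi=\varphi_H$ be the diffeomorphism of \eqref{PCH0}, $\tilde{\omega}=\varphi(\omega)$, $A_H=\tilde{A}$ the matrix of \eqref{PCH2}, and set $v(y,t)=u(\varphi^{-1}(y),t)$. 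Since $\varphi$ acts only on the space variable, straightforward computations give that $v$ solves $\mathcal{L}_{A_H}^wv=0$ in $\tilde{\omega}\times(-\mathfrak{t},\mathfrak{t})$, where $\mathcal{L}_{A_H}^w$ is again a wave operator with principal part $\Delta_{A_H}-\partial_t^2$ and first order coefficients bounded in $L^\infty$ by constants depending only on $n$, $\mathfrak{m}$ and $\vartheta$, and $\mbox{supp}(v(\cdot,t))\subset\tilde{\omega}_+=\{y\in\tilde{\omega};\ y_n\ge|y'|^2\}$ for every $t$. Since $u\in H^{2,1}$ solves $\mathcal{L}_A^wu=0$, the equation gives $\partial_t^2u\in L^2$, hence $u$, and likewise $v$, lies in $H^2$ and Corollary~\ref{CarlemanCorollary1} will apply to suitable truncations of $v$.

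Next I build the Carleman weight for the wave operator $\Delta_{A_H}-\partial_t^2$. By hypothesis $H$ is $A$-pseudo-convex, that is (Definition~\ref{definitionPCH1}) $\tilde{\psi}_0(y)=(y_n-1)^2+|y'|^2$ is $A_H$-pseudo-convex with some constant $\kappa>0$ in $\tilde{\omega}$; since $\nabla\tilde{\psi}_0(0)\ne0$, after shrinking $\tilde{\omega}$ I also have $|\nabla\tilde{\psi}_0|_{A_H}\ge\delta_0>0$ there. Put $\psi(y,t)=\tilde{\psi}_0(y)-\gamma t^2/2+C$ with $C$ large enough that $\psi\ge0$. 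For $0<\gamma<\min(\sqrt{\delta_0}/\mathfrak{t},\ \varkappa_H^{-1}\kappa/4)$, where $\varkappa_H$ is an ellipticity bound for $A_H$, conditions \eqref{int1} and \eqref{int2} of Definition~\ref{definitionCW}(d) both hold, so $\phi=e^{\lambda\psi}$ is a weight function for $\Delta_{A_H}-\partial_t^2$ on $\tilde{\omega}\times(-\mathfrak{t},\mathfrak{t})$. The model geometry gives, as in Theorem~\ref{theoremUCw1}, that for $r_0>0$ small $(\tilde{\omega}_+\cap B(0,r_0))\setminus\{0\}\subset\{\tilde{\psi}_0<\tilde{\psi}_0(0)=1\}$, and one fixes in turn $0<\rho_1<r_0$, then $\epsilon>0$, then $0<\rho_0<\rho_1$ so that $\tilde{\psi}_0<1-\epsilon$ on $\tilde{\omega}_+\cap[B(0,r_0)\setminus\overline{B}(0,\rho_1)]$ while $\tilde{\psi}_0>1-\epsilon/2$ on $\tilde{\omega}_+\cap B(0,\rho_0)$.

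I then apply Corollary~\ref{CarlemanCorollary1} to $w=\chi(y)\zeta_\eta(t)v$, with $\chi\in C_0^\infty(B(0,r_0))$ equal to $1$ near $0$ and $\zeta_\eta\in C_0^\infty((-\mathfrak{t},\mathfrak{t}))$ equal to $1$ on $[-\eta\mathfrak{t}/2,\eta\mathfrak{t}/2]$, $\eta\in(0,1)$ fixed. Since $\mathcal{L}_{A_H}^wv=0$, the function $\mathcal{L}_{A_H}^ww$ is supported in $\mathbf{Q}_1\cup\mathbf{Q}_2$, with $\mathbf{Q}_1=\{\tilde{\omega}_+\cap[B(0,r_0)\setminus\overline{B}(0,\rho_1)]\}\times(-\mathfrak{t},\mathfrak{t})$ carrying the terms involving $\nabla\chi$ and $\mathbf{Q}_2=[\tilde{\omega}_+\cap B(0,r_0)]\times([-\mathfrak{t},-\eta\mathfrak{t}]\cup[\eta\mathfrak{t},\mathfrak{t}])$ the terms involving $\zeta_\eta'$ and $\zeta_\eta''$; and since $w$ is compactly supported in $\tilde{\omega}\times(-\mathfrak{t},\mathfrak{t})$ the boundary and endpoint terms in the estimate of Corollary~\ref{CarlemanCorollary1} vanish. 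Comparing the values of $\psi$ on $\mathbf{Q}_0=[\tilde{\omega}_+\cap B(0,\rho_0)]\times(-\eta\mathfrak{t}/2,\eta\mathfrak{t}/2)$ with those on $\mathbf{Q}_1$ and $\mathbf{Q}_2$, and taking $\gamma$ close to $\sqrt{\delta_0}/\mathfrak{t}$ so that $\gamma\eta^2\mathfrak{t}^2>4\epsilon/3$, one obtains constants $c_1<c_0$ and $c_2<c_0$ with $e^{\lambda\psi}\le c_1$ on $\mathbf{Q}_1$, $e^{\lambda\psi}\le c_2$ on $\mathbf{Q}_2$ and $e^{\lambda\psi}\ge c_0$ on $\mathbf{Q}_0$, exactly as at the end of the proof of Theorem~\ref{theoremUCw1}; inserting these into the Carleman estimate and letting $\tau\to\infty$ forces $v=0$ on $\mathbf{Q}_0$, hence $u=0$ on a neighbourhood of $\tilde{x}$ times $(-\eta\mathfrak{t}/2,\eta\mathfrak{t}/2)$. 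Since $\eta\in(0,1)$ is arbitrary, letting $\eta\uparrow1$ as in the proof of Theorem~\ref{theoremUCs1} gives $u=0$ in $\mathcal{B}\times(-\mathfrak{t}/2,\mathfrak{t}/2)$ for a suitable neighbourhood $\mathcal{B}$ of $\tilde{x}$.

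The hard part is the weight construction: for the wave operator the time part $\psi_1(t)=-\gamma t^2/2$ is subject not only to \eqref{int1} but also to the curvature cap \eqref{int2}, which bounds $\gamma$ from above, whereas separating the level sets on $\mathbf{Q}_2$ requires $-\gamma t^2/2$ to be sufficiently negative near $|t|=\mathfrak{t}$, i.e. $\gamma\eta^2\mathfrak{t}^2>4\epsilon/3$. Reconciling these two demands is the delicate point; it is handled, as in Theorems~\ref{theoremUCw1} and~\ref{theoremUCw2}, by keeping $\gamma$ near its admissible maximum and by using the one-parameter family of time cutoffs $\zeta_\eta$ with $\eta\uparrow1$ rather than a single cutoff, which is precisely what allows one to reach the full interval $(-\mathfrak{t}/2,\mathfrak{t}/2)$ in the conclusion.
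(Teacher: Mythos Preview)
There is a genuine gap, and it comes from mixing the wave and the Schr\"odinger toolkits. The theorem you are proving sits in the Schr\"odinger section (the regularity is $H^{2,1}$, there is no large-time hypothesis $\mathfrak{t}\ge\mathfrak{t}^\ast$, and the conclusion is $\mathcal{B}\times(-\mathfrak{t}/2,\mathfrak{t}/2)$); the symbol $\mathcal{L}_A^w$ in the statement is a misprint for $\mathcal{L}_A^s$. The paper's intended proof is the straightforward transplant of Theorem~\ref{theoremUCs1} through the flattening map $\varphi_H$, using the Schr\"odinger Carleman estimate (Theorem~\ref{CarlemanTheoremSch1}). The key feature of that estimate is that the weight only needs $\psi_0$ to be $A_H$-pseudo-convex (Definition~\ref{definitionCW}(c)); there is \emph{no} constraint analogous to \eqref{int2} on $\partial_t^2\psi_1$, so $\gamma$ may be chosen in the window $\frac{4\epsilon}{3\eta^2\mathfrak{t}^2}<\gamma<\frac{4\epsilon}{\eta^2\mathfrak{t}^2}$ for every $\mathfrak{t}>0$ and every $\eta\in(0,1)$, exactly as in the proof of Theorem~\ref{theoremUCs1}.

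Your argument instead invokes Corollary~\ref{CarlemanCorollary1} (the wave Carleman inequality) and therefore carries the curvature cap \eqref{int2}, i.e.\ $\gamma\le \varkappa_H^{-1}\kappa/4$. This bound is independent of $\mathfrak{t}$, so your claimed reconciliation ``keep $\gamma$ near its admissible maximum and let $\eta\uparrow 1$'' does not work: when $\mathfrak{t}$ is small one has $\gamma\eta^2\mathfrak{t}^2\le (\varkappa_H^{-1}\kappa/4)\,\mathfrak{t}^2$, which can be far below $4\epsilon/3$, and the separation $c_2<c_0$ fails. That obstruction is precisely why the wave version of the theorem (the earlier Theorem~\ref{theoremUCw3} in Section~3) needs the hypothesis $\mathfrak{t}\ge\mathfrak{t}^\ast$ and only concludes $u=0$ on $\mathcal{B}\times(-\tau,\tau)$ for some $\tau\le\mathfrak{t}$. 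To repair your proof, replace Corollary~\ref{CarlemanCorollary1} by Theorem~\ref{CarlemanTheoremSch1}, drop any reference to \eqref{int1}--\eqref{int2}, and choose $\gamma$ as in Theorem~\ref{theoremUCs1}; the rest of your outline (flattening via $\varphi_H$, the sets $\mathbf{Q}_0,\mathbf{Q}_1,\mathbf{Q}_2$, and the $\eta\uparrow1$ limit) then goes through verbatim and matches the paper's approach.
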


\subsection{Observability inequality}

In this subsection $t_1=0$ and $t_2=\mathfrak{t}>0$.

Let $\mathscr{A}:L^2(\Omega )\rightarrow L^2(\Omega )$  be the unbounded operator introduced in the preceding section. That is
\[
\mathscr{A}u=-\Delta_Au,\quad D(\mathscr{A})=H_0^1(\Omega )\cap H^2(\Omega).
\]
It is known that $u(t)=e^{it\mathscr{A}}u_0$, $u_0\in L^2(\Omega )$ is the solution of the following IBVP
\begin{equation}\label{OIp1}
\left\{
\begin{array}{ll}
\Delta_Au+i\partial_tu=0\quad \mbox{in}\; Q,
\\
u(\cdot ,0)=u_0,
\\
u_{|\Sigma}=0.
\end{array}
\right.
\end{equation}
Furthermore, $u$ belongs to $C([0,\mathfrak{t}];D(\mathscr{A}))\cap C^1([0,\mathfrak{t}];L^2(\Omega ))$ whenever $u_0\in D(\mathscr{A})$ and,  for $0\le t\le \mathfrak{t}$, we have
\begin{equation}\label{OIs1}
\|u(\cdot ,t)\|_{L^2(\Omega)}=\|u_0\|_{L^2(\Omega)},\quad \|\nabla_A u(\cdot ,t)\|_{L^2(\Omega)}=\|\nabla_Au_0\|_{L^2(\Omega )}.
\end{equation}

\begin{theorem}\label{theoremOIs1}
Suppose that $0\le \psi_0\in C^4(\overline{\Omega})$ is $A$-pseudo-convex with constant $\kappa >0$ and let $\Gamma_+=\{x\in \Gamma ;\; \partial_{\nu_A}\psi_0(x)>0\}$. Then there exists a constant $\aleph$ only depending $\Omega$, $\mathfrak{t}$, $\varkappa$, $\kappa$ and $\Gamma_+$, so that, for any $u_0\in D(\mathscr{A})$, we have
\[
\|u_0\|_{H_0^1(\Omega)}\le \aleph \|\partial_\nu u\|_{L^2(\Sigma_+)},
\]
where $\Sigma_+=\Gamma_+\times (0,\mathfrak{t})$ and $u=e^{it\mathscr{A}}u_0$.
\end{theorem}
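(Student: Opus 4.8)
The plan is to reproduce the argument of Theorem~\ref{theoremOIw1} for the wave equation, with the Carleman inequality of Theorem~\ref{CarlemanTheoremSch2} in place of Corollary~\ref{CarlemanCorollary2} and the \emph{exact} conservation laws \eqref{OIs1} replacing the approximate energy bounds \eqref{OIw6.2}.

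First I would choose the Carleman weight. By Definition~\ref{definitionCW}(c), $\phi=e^{\lambda\psi}$ is a weight function for $\mathcal{L}_{A,0}^s$ whenever $\psi=\psi_0+\psi_1\in C^4(\overline{Q})$ is nonnegative and $\psi_0$ is $A$-pseudo-convex; the latter is part of the hypothesis, and, crucially, no compatibility condition of the type \eqref{int1}--\eqref{int2} between $\psi_1$ and $\psi_0$ is required in the Schr\"odinger case. I therefore take $\psi_1(t)=-\beta(t-\mathfrak{t}/2)^2+C$, where $C$ is large enough to make $\psi\ge0$ on $\overline{Q}$ and $\beta$ is large enough that the oscillation $\|\psi_0\|_{L^\infty(\Omega)}$ is dominated by the gap between the values of $\psi_1$ on $[3\mathfrak{t}/8,5\mathfrak{t}/8]$ and on $(0,\mathfrak{t}/4]\cup[3\mathfrak{t}/4,\mathfrak{t})$. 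This produces constants $c_0>c_1$ with $\phi\ge c_0$ on $\Omega\times[3\mathfrak{t}/8,5\mathfrak{t}/8]$ and $\phi\le c_1$ on $\Omega\times\bigl((0,\mathfrak{t}/4]\cup[3\mathfrak{t}/4,\mathfrak{t})\bigr)$. Finally $\delta=\min_{\overline{Q}}|\nabla\psi_0|_A>0$ automatically, since $A$-pseudo-convexity forces $\nabla\psi_0\neq0$ on the compact $\overline{\Omega}$, so Theorem~\ref{CarlemanTheoremSch2} applies with constants $\aleph,\lambda^\ast,\tau^\ast$ now fixed.

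Next I would pick $\varrho\in C_0^\infty((\mathfrak{t}/8,7\mathfrak{t}/8))$ with $\varrho\equiv1$ on $[\mathfrak{t}/4,3\mathfrak{t}/4]$, so that $\mbox{supp}(\varrho')\subset(\mathfrak{t}/8,\mathfrak{t}/4]\cup[3\mathfrak{t}/4,7\mathfrak{t}/8)$, and apply Theorem~\ref{CarlemanTheoremSch2} to $v=\varrho u$. This is legitimate because, for $u_0\in D(\mathscr{A})$, the solution $u=e^{it\mathscr{A}}u_0$ belongs to $C([0,\mathfrak{t}];D(\mathscr{A}))\cap C^1([0,\mathfrak{t}];L^2(\Omega))\subset H^{2,1}(Q)$, $v$ vanishes on $\Sigma$ and at $t\in\{0,\mathfrak{t}\}$, and $\mathcal{L}_{A,0}^sv=i\varrho'u$. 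Keeping on the left only the gradient term and restricting the domain of integration to $\Omega\times[3\mathfrak{t}/8,5\mathfrak{t}/8]$, where $\varrho\equiv1$ and $e^{2\tau\phi}\phi\ge e^{2\tau c_0}c_0$, and bounding the right-hand side using $e^{2\tau\phi}\le e^{2\tau c_1}$ on $\mbox{supp}(\varrho')$ together with the boundedness of $e^{2\tau\phi}\phi$ on $\overline{\Sigma}$, I obtain, for $\lambda\ge\lambda^\ast$ fixed and $\tau\ge\tau^\ast$, an inequality of the schematic form
\[
c_\ast\,e^{2\tau c_0}\!\int_{3\mathfrak{t}/8}^{5\mathfrak{t}/8}\!\|\nabla u(\cdot,t)\|_{L^2(\Omega)}^2\,dt\ \le\ e^{2\tau c_1}\!\int_0^{\mathfrak{t}}\!\|u(\cdot,t)\|_{L^2(\Omega)}^2\,dt\ +\ C_\tau\,\|\partial_\nu u\|_{L^2(\Sigma_+)}^2 ,
\]
with $c_\ast>0$ independent of $\tau$ (up to a harmless polynomial factor in $\tau$). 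Now \eqref{OIs1} and the ellipticity of $A$ give $\|\nabla u(\cdot,t)\|_{L^2(\Omega)}^2\ge\varkappa^{-1}\|\nabla_Au_0\|_{L^2(\Omega)}^2$, and by Poincar\'e's inequality $\|u(\cdot,t)\|_{L^2(\Omega)}^2=\|u_0\|_{L^2(\Omega)}^2\le\aleph\|\nabla_Au_0\|_{L^2(\Omega)}^2$ for every $t$, so the displayed inequality yields $\bigl(c_\ast'\,e^{2\tau c_0}-\aleph\,e^{2\tau c_1}\bigr)\|\nabla_Au_0\|_{L^2(\Omega)}^2\le C_\tau\|\partial_\nu u\|_{L^2(\Sigma_+)}^2$. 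Since $c_0>c_1$, I fix $\tau$ large enough that the coefficient on the left is positive; then $\|\nabla_Au_0\|_{L^2(\Omega)}\le\aleph\|\partial_\nu u\|_{L^2(\Sigma_+)}$, and one further use of ellipticity and Poincar\'e's inequality turns this into $\|u_0\|_{H_0^1(\Omega)}\le\aleph\|\partial_\nu u\|_{L^2(\Sigma_+)}$, the constant depending only on $\Omega$, $\mathfrak{t}$, $\varkappa$, $\kappa$ and $\Gamma_+$ (through the choice of $\psi_0$).

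The serious analytic work is entirely contained in Theorem~\ref{CarlemanTheoremSch2}, which I use as a black box; granting it, the proof above is routine manipulation — absorbing small terms via the weight separation $c_0>c_1$ and conservation of energy. The only point that requires a little care is the construction of $\psi_1$, and the noteworthy feature is that it succeeds for \emph{every} $\mathfrak{t}>0$: because the Schr\"odinger weight is free of the constraints \eqref{int1}--\eqref{int2}, the time separation $c_0>c_1$ can always be arranged, which is exactly why — in contrast with Theorem~\ref{theoremOIw1} — no lower bound on $\mathfrak{t}$ appears in the statement.
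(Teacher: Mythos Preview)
Your proposal is correct and follows exactly the route the paper has in mind: the paper's own proof is a one-line reference (``modify slightly the proof of the wave equation, using the Schr\"odinger Carleman estimate and the conservation identities~\eqref{OIs1}''), and you have written out precisely those modifications, including the correct observation that Definition~\ref{definitionCW}(c) imposes no conditions~\eqref{int1}--\eqref{int2} on $\psi_1$, which is what removes the lower bound on $\mathfrak{t}$.
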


\begin{proof}
In light of Theorem \ref{CarlemanTheoremSch1} and identities \eqref{OIs1}, the expected inequality can be proved by modifying slightly that of the wave equation. 
\end{proof}

\begin{remark}\label{remarkS1}
{\rm
It is worth mentioning that the results for the elliptic, wave and Schr\"odinger equations can be extended to the case where $\Delta_A$ is substituted by the associated magnetic operator defined by
\[
\Delta_{A,\mathbf{b}}u=\sum_{k, \ell=1}^n(\partial_k +ib_k)a_{k\ell}(\partial_\ell +ib_\ell)u,
\]
with $\mathbf{b}=(b_1,\ldots ,b_n)\in W^{1,\infty}(\Omega ;\mathbb{R}^n)$.

Note that  $\Delta_{A,\mathbf{b}}u$ can be rewritten in the following form
\[
\Delta_{A,\mathbf{b}}u=\Delta_A u +2i(\nabla u|\mathbf{b})_A+\left(-|\mathbf{b}|_A^2+\mbox{div}(A\mathbf{b})\right)u.
\]
}
\end{remark}

\vskip .5cm
\end{document}